\newtheorem{thm}{\bf Theorem}[section]
\newtheorem{df}[thm]{\bf Definition}
\newtheorem{prop}[thm]{\bf Proposition}
\newtheorem{cor}[thm]{\bf Corollary}
\newtheorem{lem}[thm]{\bf Lemma}
\newtheorem{rem}[thm]{\bf Remark}
\newtheorem{ex}[thm]{\bf Example}
\numberwithin{equation}{section}
\newcommand{\bs}{\boldsymbol}
\newcommand{\B}{\mathbf{B}}
\newcommand{\W}{\mathcal{W}}
\newcommand{\cP}{\mathscr{P}}
\newcommand{\pf}{\noindent{\bfseries Proof. }}
\newcommand{\ov}{\overline}
\newcommand{\ba}{\bs{\rm a}}
\newcommand{\bb}{\bs{\rm b}}
\newcommand{\bc}{\bs{\rm c}}
\newcommand{\bi}{\bs{\rm i}}
\newcommand{\bj}{\bs{\rm j}}
\newcommand{\gl}{\mathfrak{gl}}
\newcommand{\Z}{\mathbb{Z}}
\newcommand{\Q}{\mathbb{Q}}
\newcommand{\te}{\widetilde{e}}
\newcommand{\tf}{\widetilde{f}}
\newcommand{\g}{\mathfrak{g}}
\newcommand{\td}{\widetilde}
\newcommand{\mc}{\mathcal}
\newcommand{\mf}{\mathfrak}
\newcommand{\La}{\Lambda}
\newcommand{\la}{\lambda}
\newcommand{\ep}{\epsilon}
\newcommand{\nw}{^{\nwarrow}}
\newcommand{\se}{^{\searrow}}
\newcommand{\blue}[1]{{\color{blue}#1}}
\newcommand{\red}[1]{{\color{red}#1}}
\newcommand{\tl}[1]{\substack{\scalebox{0.75}{#1}}}
\newcommand{\Tl}[1]{\substack{\scalebox{0.95}{#1}}}
\begin{document}
\title[Crystals of quantum nilpotent subalgebras]
{Quantum nilpotent subalgebras of classical quantum groups and affine crystals}

\author{IL-SEUNG JANG}

\address[I.-S. Jang]{Department of Mathematical Sciences, Seoul National University, Seoul 08826, Korea}
\email{is\_jang@snu.ac.kr}
\curraddr{Department of Mathematics, Incheon National University, Incheon 22012, Republic of Korea (current email:~\texttt{ilseungjang@inu.ac.kr})}

\author{JAE-HOON KWON}

\address[J.-H. Kwon]{Department of Mathematical Sciences and RIM, Seoul National University, Seoul 08826, Korea}
\email{jaehoonkw@snu.ac.kr}

\keywords{quantum groups, quantum nilpotent subalgebra, crystal graphs}
\subjclass[2010]{17B37, 22E46, 05E10}

\thanks{This work was supported by Samsung Science and Technology Foundation under Project Number SSTF-BA1501-01.}

\begin{abstract}
We study the crystal of quantum nilpotent subalgebra of $U_q(D_n)$ associated to a maximal Levi subalgebra of type $A_{n-1}$. We show that it has an affine crystal structure of type $D_n^{(1)}$ isomorphic to a limit of perfect  Kirillov-Reshetikhin crystal $B^{n,s}$ for $s\geq 1$, and give a new polytope realization of $B^{n,s}$. We show that an analogue of RSK correspondence for type $D$ due to Burge is an isomorphism of affine crystals and give a generalization of Greene's formula for type $D$.

\end{abstract}

\maketitle
\setcounter{tocdepth}{1}

\section{Introduction}

Let $\mf g$ be a classical Lie algebra and let $\mf l$ be its proper maximal Levi subalgebra of type $A$ (or a sum of type $A$). Let $\mf u^-$ be the negative  nilradical of the parabolic subalgebra $\mf p=\mf l + \mf b$, where $\mf b$ is a Borel subalgebra of $\mf g$.
The enveloping algebra $U(\mf u^-)$ is an integrable $\mf l$-module, which has a multiplicity-free decomposition \cite{H}, and the expansion of its character
\begin{equation}
{\rm ch} \ U(\mf u^-) = {\prod_{\alpha\in \Phi({\mf u}^-)}(1-e^\alpha)^{-1}}
\end{equation}
into irreducible $\mf l$-characters (that is, Schur polynomials or a product of Schur polynomials) gives the celebrated Cauchy identity when $\mf g$ is of type $A$, and Littlewood identities when $\mf g$ is of type $B, C, D$, where $\Phi(\mf u^-)$ is the set of roots of $\mf u^-$.

The decomposition of $U(\mf u^-)$ into $\mf l$-modules has a purely combinatorial interpretation by RSK 
correspondence and its variations, say $\kappa$ (cf. \cite{B,Ful}). Indeed, the correspondence for type $B$ and $C$ is given by symmetrizing the map for type $A$.
A more representation theoretic meaning is available by crystal base theory \cite{Kas91}. In \cite{La}, Lascoux showed that $\kappa$ is an isomorphism of $\mf l$-crystals, which immediately implies the same result for type $B$ and $C$ \cite{K09} by using similarity of crystals \cite{Kas96}.

%
Furthermore, it is shown in \cite{K13} that the RSK correspondence $\kappa$ can be extended to an isomorphism of affine crystals of type $A_{n}^{(1)}$ when $\mf g$ is of type $A_{n}$, and of type $D_{n+1}^{(2)}$ and $C_n^{(1)}$ when $\mf g$ is of type $B_n$ and $C_n$, respectively.
It is done by regarding the set of biwords (or the set of matrices with non-negative integral entries) as the crystal $B(U_q(\mf u^-))$ of the quantum nilpotent subalgebra $U_q(\mf u^-)$ (see also \cite{K18,K18-2}). This approach enables us to define naturally the Kashiwara operators on both sides of the correspondence $\kappa$ for the simple roots other than the ones in $\mf l$.
Moreover it is proved that $B(U_q(\mf u^-))$ is isomorphic to a limit of perfect Kirillov-Reshetikhin crystals $B^{r,s}$, which are classically irreducible (cf. \cite{FOS}). An explicit description of $B^{r,s}$ as a subcrystal of $B(U_q(\mf u^-))$ is also given in \cite{K13}. We remark that a similar viewpoint appears in a recent study of affine geometric crystals of type $A$ \cite{MN}.

In this paper, we establish an analogue of the above result when $\g$ is of type $D$. The main difficulty for type $D$ is that the description of crystal $B(U_q(\mf u^-))$ and related combinatorics are more involved than in case of type $A$, $B$, $C$.
 To handle with it, we use a recent work by Salisbury-Schultze-Tingley \cite{SST} on crystal structures of Lusztig data of PBW basis \cite{Lu90}.

We consider the crystal $\B_{\bi_0}$ of $\bi_0$-Lusztig data, where $\bi_0$ is  a reduced expression associated to a specific convex order on the set of positive roots of $\mf g$. 
The subcrystal $B(U_q(\mf u^-))$ of $\B_{\bi_0}$ consisting of Lusztig data on $\Phi(\mf u^-)$ has a nice combinatorial realization, and naturally admits an affine crystal structure of type $D_n^{(1)}$ isomorphic to a limit of KR crystals $B^{n,s}$ for $s\geq 1$, where $B^{n,1}$ is the crystal of the spin representation as a classical crystal of type $D_n$.
We give an explicit description of $B^{n,s}\subset B(U_q(\mf u^-))$ in terms of double paths on $\Phi(\mf u^-)$, which yields a polytope realization of $B^{n,s}$ (Theorem \ref{thm:main-1}). 
%
%

We then consider an analogue of RSK correspondence for type $D$ due to Burge \cite{B}. We apply this map to $B(U_q(\mf u^-))$, which sends a Lusztig datum on $\Phi(\mf u^-)$ to a semistandard tableau with columns of even length. 
As a main result of this paper, we prove that it is an isomorphism of affine crystals of type $D_n^{(1)}$, where a suitable affine crystal structure is defined on the side of tableaux  (Theorem \ref{thm:isomorphism theorem}). Furthermore, we present an interesting formula for the shape of a semistandard tableau corresponding to a Lusztig datum on $\Phi(\mf u^-)$ in terms of non-intersecting double paths on $\Phi(\mf u^-)$ (Theorem \ref{thm:shape}). This formula can be viewed as an analogue of Greene's formula for the shape of a tableau corresponding to a biword under RSK given in terms of disjoint weakly decreasing subwords \cite{G} (see also Remark \ref{rem:Greene}).

We should remark that the formula of Berenstein-Zelevinsky \cite{BZ-1} for the transition matrix between string parametrization and Lusztig data  plays a crucial role when we characterize $B^{n,s}$ in $B(U_q(\mf u^-))$ and derive the type $D$ analogue of Greene's result in Burge's correspondence. A key observation is that the notion of trail and its combinatorics introduced in \cite{BZ-1} recovers the Greene's formula in  type $A$, and can be reformulated in terms of double paths in case of type $D$.

The paper is organized as follows. In Section \ref{sec:crystal}, we review necessary background including crystals of Lusztig data and a work by Salisbury-Schultze-Tingley. In Section \ref{sec:crystal of quantum nil}, we describe in detail the crystal $\B_{\bi_0}$ and $B(U_q(\mf u^-))$ when $\mf g$ is of type $D$. In Section \ref{sec:main}, we state the main results in this paper, whose complete proofs are given in Section \ref{sec:proof}.

\noindent
{\bf Acknowledgment}.\!
The authors would like to thank Akito Uruno for his very careful reading and helpful comments.

\section{Quantum groups and PBW crystals}\label{sec:crystal}

\subsection{Crystals}\label{subsec:crystal}
Let us give a brief review on crystals (see \cite{HK,Kas91,Kas95} for more details). Let $\Z_+$ denote the set of non-negative integers. Let $\g$ be the Kac-Moody algebra associated to a symmetrizable generalized Cartan matrix $A =(a_{ij})_{i,j\in I}$ indexed by a set $I$. Let $P^\vee$ be the dual weight lattice, $P = {\rm Hom}_\Z( P^\vee,\Z)$ the weight lattice, $\Pi^\vee=\{\,h_i\,|\,i\in I\,\}\subset P^\vee$ the set of simple coroots, and $\Pi=\{\,\alpha_i\,|\,i\in I\,\}\subset P$ the set of simple roots of $\g$ such that $\langle \alpha_j,h_i\rangle=a_{ij}$ for $i,j\in I$. Let $P^+$ be the set of integral dominant weights. 

For an indeterminate $q$, let $U_q(\g)$ be the quantized enveloping algebra of $\g$ generated by $e_i$, $f_i$, and $q^h$ for $i\in I$ and $h\in P^\vee$ over $\Q(q)$.  A {\it $\g$-crystal} (or simply a {\it crystal} if there is no confusion on $\g$) is a set $B$ together with the maps ${\rm wt} : B \rightarrow P$, $\varepsilon_i, \varphi_i: B \rightarrow \mathbb{Z}\cup\{-\infty\}$ and $\te_i, \tf_i: B \rightarrow B\cup\{{\bf 0}\}$ for $i\in I$ satisfying certain axioms. We denote by $B(\infty)$ the crystal associated to the negative part $U^-_q(\g)$ of $U_q(\g)$.
Let $\ast$ be the $\mathbb{Q}(q)$-linear anti-automorphism of $U_q(\g)$ such that $e_i^\ast =e_i$, $f_i^\ast=f_i$, and $(q^h)^\ast =q^{-h}$ for $i\in I$ and $h\in P$. Then $\ast$ induces a bijection on $B(\infty)$. For $i\in I$, we define $\te_i^\ast =\ast \circ \te_i \circ \ast$ and $\tf_i^\ast =\ast \circ \tf_i \circ \ast$ on $B(\infty)$. For $\Lambda\in P^+$, we denote by $B(\Lambda)$ the crystal associated to an irreducible highest weight $U_q(\g)$-module $V(\Lambda)$ with highest weight $\Lambda$. For $\mu\in P$, let $T_\mu=\{t_\mu\}$ be a crystal, where ${\rm wt}(t_\mu)=\mu$, and $\varphi_i(t_\mu)=-\infty$ for all $i\in I$.

A morphism
$\psi : B_1 \rightarrow B_2$ is called an embedding if it is injective, and in this case $B_1$ called a subcrystal of $B_2$. For crystals $B_1$ and $B_2$, the {\em tensor product $B_1 \otimes B_2$} is defined to be $B_1 \times B_2$ as a set with elements denoted by $b_1 \otimes b_2$, where
\begin{equation*} \label{eq:tensor_product_rule}
\begin{split}
	& {\rm wt}(b_1 \otimes b_2) = {\rm wt}(b_1) + {\rm wt}(b_2), \\
	& \varepsilon_{i}(b_{1} \otimes b_{2}) = \max\{ \varepsilon_{i}(b_{1}), \varepsilon_{i}(b_{2})-\langle {\rm wt}(b_{1}), h_i \rangle \}, \\
	& \varphi_{i}(b_{1} \otimes b_{2}) = \max\{ \varphi_{i}(b_{1})+\langle {\rm wt}(b_2), h_i \rangle, \varphi_{i}(b_{2}) \}, \\
	& \tilde{e}_{i}(b_{1} \otimes b_{2}) = \left\{ \begin{array}{cc} \tilde{e}_{i}b_{1} \otimes b_{2} & \textrm{if} \ \varphi_{i}(b_{1}) \ge \varepsilon_{i}(b_{2}), \\ b_{1} \otimes \tilde{e}_{i}b_{2} & \textrm{if} \ \varphi_{i}(b_{1}) < \varepsilon_{i}(b_{2}), \end{array} \right. \\
	& \tilde{f}_{i}(b_{1} \otimes b_{2}) = \left\{ \begin{array}{cc} \tilde{f}_{i}b_{1} \otimes b_{2} & \textrm{if} \ \varphi_{i}(b_{1}) > \epsilon_{i}(b_{2}), \\ b_{1} \otimes \tilde{f}_{i}b_{2} & \textrm{if} \ \varphi_{i}(b_{1}) \le \epsilon_{i}(b_{2}),\end{array} \right.
\end{split}	
\end{equation*} for $i \in I$. Here, we assume that $\textbf{0} \otimes b_{2} = b_{1} \otimes \textbf{0} = \textbf{0}$. Then $B_1 \otimes B_2$ is a crystal.

\subsection{PBW crystals}\label{sec:PBW crystal}
Suppose that $\g$ is of finite type. Let us briefly recall a PBW basis and the crystal of Lusztig data which is isomorphic to $B(\infty)$ (see \cite{Lu90, Lu90-2,S94}).
Let $W$ be the Weyl group of $\g$ generated by the simple reflection $s_i$ for $i\in I$.
Let $w_0$ be the longest element in $W$ of length $N$, and let $R(w_0)=\{\,\bi=(i_1,\ldots,i_N)\,|\,w_0=s_{i_1}\ldots s_{i_N}\,\}$ be the set of reduced expressions of $w_0$. 

For $\bi\in R(w_0)$,
\begin{equation}\label{eq:beta_k}
\Phi^+=\{\beta_1:=\alpha_{i_1}, \beta_2:=s_{i_1}(\alpha_{i_2}),  \ldots ,\beta_N:=s_{i_1}\cdots s_{i_{N-1}}(\alpha_{i_N})\}
\end{equation}
is the set of positive roots of $\g$. For $i\in I$, let $T_i$ be the $\mathbb{Q}(q)$-algebra automorphism of $U_q(\mathfrak{g})$, which is given as {$T''_{i,1}$} in \cite{Lu93}. For $1\leq k\leq N$, put 
$f_{\beta_k}:=T_{i_{1}}T_{i_2}\cdots T_{i_{k-1}}(f_{i_{k}})$,
and for ${\bf c}=(c_{\beta_1},\ldots,c_{\beta_N})\in\Z_+^N$, let
\begin{equation}\label{eq:PBW vector}
\begin{split}
b _{\bi}(\bf c)=&
f_{\beta_1}^{(c_{\beta_1})}f_{\beta_2}^{(c_{\beta_2})}\cdots f_{\beta_N}^{(c_{\beta_N})},
\end{split}
\end{equation}
where $f_{\beta_k}^{(c_{\beta_k})}$ is a divided power of $f_{\beta_k}$. Then the set $B_{\bf i}:=\{\,b _{\bi}({\bf c})\,|\,{\bf c}\in\Z_+^{N}\,\}$ is a $\mathbb{Q}(q)$-basis of $U^-_q(\g)$ called a {\it PBW basis}.


Let $A_0$ be the subring of $\mathbb{Q}(q)$ consisting of rational functions regular at $q=0$. The $A_0$-lattice $L(\infty)$ of $U^-_q(\g)$ generated by $B_{\bf i}$ is independent of the choice of $\bi$ and invariant under $\te_i$, $\tf_i$, and the induced crystal $\pi(B_{\bf i})$ under a canonical projection $\pi : L(\infty) \rightarrow L(\infty)/q L(\infty)$ is isomorphic to $B(\infty)$. We identify ${\bf B}_{\bi}:=\Z_+^N$ with a crystal $\pi(B_{\bi})$ under the map ${\bf c}\mapsto b_{\bi}({\bf c})$, and call ${\bf c}\in \B_{\bi}$ an {\it $\bi$-Lusztig datum}.

Let $w\in W$ be given with length $r$. One may assume that there exists $\bi =(i_1,\ldots,i_N) \in R(w_0)$ such that $w=s_{i_1}\cdots s_{i_r}$. The $\Q(q)$-subspace of $U^-_q(\mf g)$ spanned by $b_{\bi}({\bf c})$ for ${\bf c}\in \B_{\bi}$ with $c_k=0$ for $r+1\leq k\leq N$ is the $\Q(q)$-subalgebra of $U^-_q(\mf g)$ generated by $f_{\beta_k}$ for $1\leq k\leq r$. It is independent of the choice of the reduced expression of $w$. We call this subalgebra the {\em quantum nilpotent subalgebra associated to $w\in W$} and denote it by $U_q^-(w)$ (see for example, \cite{Ki} and references therein).

\subsection{Description of $\tf_i$}\label{subssec:tensor product rule on B}
Suppose that $\mf g$ is of finite type. 
Let $\bi\in R(w_0)$ be given. For $\beta\in \Phi^+$, we denote by ${\bf 1}_{\beta}$ the element in $\B_{\bi}$ where $c_\beta=1$ and $c_\gamma=0$ for $\gamma\in \Phi^+\setminus \{\beta\}$. The Kashiwara operators $\tf_i$ or $\tf_i^\ast$ on $\B_{\bi}$ for $i\in I$ is not easy to describe in general except 
\begin{equation} \label{eq:f}
\begin{split}
\tf_{i}{\bf c} &= (c_1+1,c_2,\ldots,c_N)= {\bf c}+{\bf 1}_{\alpha_i},\quad 
\text{when $\beta_1=\alpha_i$},\\
\tf^\ast_{i}{\bf c} &= (c_1,\ldots,c_{N-1},c_N+1)= {\bf c}+{\bf 1}_{\alpha_i},\quad \text{when $\beta_N=\alpha_i$},\\
\end{split}
\end{equation}
for ${\bf c}\in \B_{\bi}$ \cite{Lu93}.

Let us review the results in \cite{SST}, where it is shown that $\tf_i$ can be described more explicitly in terms of so-called signature rule under certain conditions on $\bi$ with respect to $i$. For simplicity, let us assume that $\mf g$ is simply laced.

Let $\sigma=(\sigma_{1},\sigma_2,\ldots,\sigma_s)$ be a sequence with $\sigma_{u}\in \{\,+\,,\,-\, , \ \cdot\ \}$. We replace a pair $(\sigma_{u},\sigma_{u'})=(+,-)$, where $u<u'$ and $\sigma_{u''}=\,\cdot\,$
for $u<u''<u'$, with $(\,\cdot\,,\,\cdot\,)$, and repeat this process as far as possible until we get a sequence with no $-$ placed to the right of $+$. We denote the resulting sequence by ${\sigma}^{\rm red}$. For another sequence $\tau=(\tau_1,\ldots,\tau_{t})$, we denote by $\sigma\cdot\tau$ the concatenation of $\sigma$ and $\tau$.

Recall that a total order $\prec$ on $\Phi^+$ is called {\em convex} 
if either $\gamma \prec \gamma' \prec \gamma''$ or $\gamma'' \prec \gamma' \prec \gamma$ whenever $\gamma'=\gamma+\gamma''$ for $\gamma, \gamma', \gamma''\in \Phi^+$. It is well-known that there exists a one-to-one correspondence between $R(w_0)$ and the set of convex orders on $\Phi^+$, where the convex order $\prec$ 
associated to $\bi=(i_1,\ldots,i_N)\in R(w_0)$ is given by 
\begin{equation} \label{eq:convex}
\beta_1\prec \beta_2\prec \ \ldots \prec \beta_N,
\end{equation} 
where $\beta_k$ is as in \eqref{eq:beta_k} \cite{Pap94}.

There exists a reduced expression $\bi'$ obtained from $\bi$ by a 3-term braid move 
$(i_k,i_{k+1},i_{k+2}) \rightarrow (i_{k+1},i_{k},i_{k+1})$ with $i_k=i_{k+2}$ if and only if 
$\{\,\beta_{k},\beta_{k+1},\beta_{k+2}\,\}$ forms the positive roots of type $A_2$,
where the corresponding convex order $\prec'$ is given by replacing $\beta_k\prec \beta_{k+1}\prec \beta_{k+2}$ with $\beta_{k+2}\prec' \beta_{k+1}\prec' \beta_{k}$. 
Also there exists a reduced expression $\bi'$ obtained from $\bi$ by a 2-term braid move $(i_k,i_{k+1})\rightarrow (i_{k+1},i_{k})$ if and only if 
$\beta_{k}$ and $\beta_{k+1}$ are orthogonal, where the associated convex ordering $\prec'$ is given by replacing $\beta_k\prec \beta_{k+1}$ with 
$\beta_{k+1}\prec'\beta_{k}$.

Given $i\in I$, suppose that $\bi$ is {\em simply braided for $i\in I$}, that is, if one can obtain $\bi'=(i'_1,\ldots ,i'_N)\in R(w_0)$ with $i'_1=i$ by applying a sequence of braid moves consisting of either a 2-term move or 3-term braid move  
$(\gamma,\gamma',\gamma'')\rightarrow (\gamma'',\gamma',\gamma)$ with $\gamma''=\alpha_i$.
Suppose that 
\begin{equation}\label{eq:triple in 3-term}
\Pi_s=\{\gamma_s,\gamma'_s,\gamma''_s\}
\end{equation}
is the triple of positive roots of type $A_2$ with 
$\gamma'_s=\gamma_s+\gamma''_s$ and $\gamma''_s=\alpha_i$ corresponding to the $s$-th 3-term braid move for $1\leq s\leq t$.

For ${\bf c}\in \B_{\bi}$, let
\begin{equation}\label{eq:sigma_i}
\sigma_i({\bf c})= 
(\underbrace{-\cdots -}_{c_{\gamma'_1}}\,\underbrace{+\cdots +}_{c_{\gamma_1}}
\ \cdots \
\underbrace{-\cdots -}_{c_{\gamma'_t}}\,\underbrace{+\cdots +}_{c_{\gamma_t}}
).
\end{equation}
Then we have the following description of $\tf_i$ on $\B_{\bi}$ \cite[Theorem 4.6]{SST}.
\begin{thm}\label{thm:signature rule} 
Let $\bi\in R(w_0)$ and $i\in I$. 
Suppose that $\bi$ is simply braided for $i$. 
Let ${\bf c}\in \B_{\bi}$ be given.
\begin{itemize}
\item[(1)] If there exists $+$ in $\sigma_i({\bf c})^{\rm red}$ and the leftmost $+$ appears in $c_{\gamma_s}$, then 
\begin{equation*}
\tf_i{\bf c} = {\bf c} - {\bf 1}_{\gamma_s} + {\bf 1}_{\gamma'_s}.
\end{equation*}
\item[(2)] If there exists no $+$ in $\sigma_i({\bf c})^{\rm red}$, then 
\begin{equation*}
\tf_i{\bf c} = {\bf c} + {\bf 1}_{\alpha_i}.
\end{equation*}

\end{itemize}
\end{thm}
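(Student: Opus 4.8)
The plan is to exploit that $\B_{\bi}$ is, for every $\bi\in R(w_0)$, a model of the same abstract crystal $B(\infty)$, so that $\tf_i$ is intertwined by the PBW transition maps $R^{\bi'}_{\bi}\colon \B_{\bi}\to\B_{\bi'}$ attached to braid moves. Since $\bi$ is simply braided for $i$, there is a sequence of moves carrying $\bi$ to some $\bi'$ with $i'_1=i$, and for such $\bi'$ the operator $\tf_i$ is just the increment of the first coordinate recorded in \eqref{eq:f}. Writing $R$ for the composite transition $\B_{\bi}\to\B_{\bi'}$, I would compute $\tf_i$ on $\B_{\bi}$ as $R^{-1}\circ\tf_i\circ R$ and read off the resulting piecewise-linear formula. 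The whole problem thus reduces to understanding $R$ move by move.

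First I would record the two elementary transition rules. A $2$-term move swaps a pair of orthogonal consecutive roots; in the simply-laced case the corresponding root vectors commute, so the divided powers in \eqref{eq:PBW vector} simply reorder and $R$ is the transposition of the two coordinates, with the induced relabelling of the roots $\gamma_s,\gamma'_s$. A $3$-term move acts on a type $A_2$ triple $\{\beta_k,\beta_{k+1},\beta_{k+2}\}$ and, by the rank-two computation of \cite{Lu93} (equivalently the tropical form of the Berenstein--Zelevinsky formula \cite{BZ-1}), transforms the three relevant coordinates by the classical piecewise-linear map governed by a single $\min$, fixing all other coordinates. In the simply-braided situation every such triple is some $\Pi_s$ with $\gamma''_s=\alpha_i$ and $\gamma'_s=\gamma_s+\alpha_i$, so this $\min$ compares the weight carried by $\gamma_s$ with the weight currently carried by $\alpha_i$.

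I would then induct on the length $m$ of the braiding sequence. The base case $m=0$ is $\beta_1=\alpha_i$, where the signature is empty, case (2) applies, and the formula is exactly \eqref{eq:f}. For the inductive step I peel off the first move $\bi\to\bi^{(1)}$: the tail witnesses that $\bi^{(1)}$ is simply braided for $i$ by a shorter sequence, so by induction $\tf_i$ on $\B_{\bi^{(1)}}$ is given by the signature rule for the triples $\Pi_2,\dots,\Pi_t$, and it remains to conjugate by $R_1^{-1}$. When the first move is a $2$-term move this only relabels the signature blocks and the claim is immediate. When it is the $3$-term move at $\Pi_1$, the map $R_1$ prepends the block $\underbrace{-\cdots-}\,\underbrace{+\cdots+}$ associated to $\Pi_1$ and, crucially, alters the $\alpha_i$-coordinate that is threaded forward into the remaining triples. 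One checks by a direct tropical computation that this single $\min$ realizes exactly one stage of the bracketing: if it leaves a surviving $+$ at $\Pi_1$ then $\tf_i$ transports one unit $\gamma_1\to\gamma'_1=\gamma_1+\alpha_i$, and otherwise the increment of $\alpha_i$ is passed to the tail unchanged and the inductive hypothesis finishes the computation.

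The step I expect to be the main obstacle is precisely this reconciliation: proving that the composite of the rank-two $\min$-maps, restricted to the coordinates entering $\sigma_i({\bf c})$, reproduces the \emph{global} cancellation of $+/-$ pairs, so that the leftmost unbracketed $+$ in $\sigma_i({\bf c})^{\rm red}$ pinpoints the unique triple $\Pi_s$ at which mass is moved. The delicate features are that the coordinate $c_{\alpha_i}$ never appears explicitly in $\sigma_i({\bf c})$ yet silently controls every $\min$ as it is threaded through the successive triples; that the boundary between case (1) and case (2) must be matched by a case analysis on whether this threaded $\alpha_i$-weight is exhausted; and that one must keep the roots $\gamma_s,\gamma'_s$ of distinct triples genuinely distinct (using convexity of the order \eqref{eq:convex}) so that the pullback of ${\bf c}-{\bf 1}_{\gamma_s}+{\bf 1}_{\gamma'_s}$ through $R_1^{-1}$ remains this same elementary move. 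This is the technical heart underlying \cite[Theorem 4.6]{SST}.
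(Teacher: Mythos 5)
The paper itself offers no proof of this statement: it is quoted verbatim from \cite[Theorem 4.6]{SST} and closed with \qed, so the only comparison available is with the argument in that reference. Your overall architecture --- conjugate $\tf_i$ by the piecewise-linear transition maps attached to the braid moves, reduce to the case $\beta_1=\alpha_i$ via \eqref{eq:f}, and induct on the length of the braiding sequence using the rank-two tropical formula $(a,b,c)\mapsto(b+c-p,\,p,\,a+b-p)$ with $p=\min(a,c)$ for each $3$-term move --- is the standard strategy and is essentially the one underlying \cite{SST}. So the route is not the problem.

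The gap is that the one step carrying all the content is asserted rather than proved, and in the form you describe it cannot be made to work. You claim the single $\min$ of the first $3$-term move ``realizes one stage of the bracketing'' of $\sigma_i({\bf c})$, while also observing that $c_{\alpha_i}$ ``never appears explicitly in $\sigma_i({\bf c})$ yet silently controls every $\min$.'' These two assertions conflict: the $\min$ at $\Pi_1$ compares $c_{\gamma_1}$ with $c_{\alpha_i}$, whereas the bracketing of \eqref{eq:sigma_i} compares $c_{\gamma_1}$ only with the $c_{\gamma'_s}$. Already for $t=1$ this is decisive. Take type $A_2$, $\bi=(1,2,1)$, $i=2$, so $(\gamma_1,\gamma'_1,\gamma''_1)=(\alpha_1,\alpha_1+\alpha_2,\alpha_2)$ and ${\bf c}=(a,b,c)$. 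Conjugating the increment of the first coordinate of $\B_{(2,1,2)}$ by the tropical map gives $\tf_2{\bf c}={\bf c}-{\bf 1}_{\gamma_1}+{\bf 1}_{\gamma'_1}$ precisely when $a>c$, and ${\bf c}+{\bf 1}_{\alpha_2}$ when $a\le c$ (consistent with $\varepsilon_2(a,b,c)=b+\max(c-a,0)$); whereas $\sigma_2({\bf c})^{\rm red}=(-^{b}\,+^{a})$ has a leftmost $+$ whenever $a>0$. So the rule your induction actually produces brackets each $+^{c_{\gamma_s}}$ against a trailing block $-^{c_{\alpha_i}}$ as well --- exactly the block that does appear at the end of $\sigma_{i,3}$ in Proposition \ref{prop:signature for type D} as $-^{c_{(i+1)\,\ov{i}}}$, but is absent from \eqref{eq:sigma_i}. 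Until the ``direct tropical computation'' is actually performed --- and it must track how the updated $\alpha_i$-coordinate $c_{\gamma'_1}+c_{\alpha_i}-\min(c_{\gamma_1},c_{\alpha_i})$ is threaded into the subsequent triples and how that reproduces the global $+/-$ cancellation --- your argument establishes only the base case, and as written it would ``prove'' an identity that fails already in rank two.
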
\qed

\section{Crystal of quantum nilpotent subalgebra}\label{sec:crystal of quantum nil}

\subsection{Crystal $\B_{\bi_0}$} \label{subsection:nilpotent_subalgebra}
From now on, we assume that $\g$ is of type $D_n$ ($n\geq 4$). 
We assume that the weight lattice is $P=\bigoplus_{i=1}^n\Z\epsilon_i$, where $\{\,\epsilon_i\,|\,1\leq i\leq n\,\}$ is an orthonormal basis with respect to a symmetric bilinear form $(\, ,\,)$, and 
the Dynkin diagram is
\begin{center} 
\setlength{\unitlength}{0.19in}
\begin{picture}(15,4.5)
\put(3.4,2){\makebox(0,0)[c]{$\bigcirc$}}
\put(5.6,2){\makebox(0,0)[c]{$\bigcirc$}}
\put(10.4,2){\makebox(0,0)[c]{$\bigcirc$}}
\put(13.1,3.3){\makebox(0,0)[c]{$\bigcirc$}}
\put(13.1,0.7){\makebox(0,0)[c]{$\bigcirc$}}
\put(3.8,2){\line(1,0){1.4}}
\put(6,2){\line(1,0){1.3}} 
\put(8.7,2){\line(1,0){1.3}} 
%
\put(10.7,2.2){\line(2,1){2}}
\put(10.7,1.8){\line(2,-1){2}}

\put(8,1.95){\makebox(0,0)[c]{$\cdots$}}
\put(3.4,1){\makebox(0,0)[c]{\tiny ${\alpha}_1$}}
\put(5.6,1){\makebox(0,0)[c]{\tiny ${\alpha}_2$}}
\put(10.4,1){\makebox(0,0)[c]{\tiny ${\alpha}_{n-2}$}}
\put(13.1,2.5){\makebox(0,0)[c]{\tiny ${\alpha}_{n-1}$}}
\put(13.1,0.0){\makebox(0,0)[c]{\tiny ${\alpha}_{n}$}}

\end{picture}
\end{center} 
where $\alpha_i=\ep_i-\ep_{i+1}$ for $1\leq i\leq n-1$, and $\alpha_n=\ep_{n-1}+\ep_n$. The set of positive roots is $\Phi^+=\{\,\ep_i\pm\ep_j\,|\,1\leq i<j\leq n\,\}$. 
Recall that $W$ acts faithfully on $P$ by $s_i(\ep_i)=\ep_{i+1}$, $s_i(\ep_k)=\ep_k$ for $1\leq i\leq n-1$ and $k\neq i, i+1$, and $s_n(\ep_{n-1})=-\ep_n$ and $s_n(\ep_k)=\ep_k$ for $k\neq n-1, n$.
The fundamental weights are $\varpi_i=\sum_{k=1}^i\epsilon_k$ for $i=1,\ldots, n-2$, $\varpi_{n-1}=(\epsilon_1+\cdots+\epsilon_{n-1}-\epsilon_n)/2$ and $\varpi_{n}=(\epsilon_1+\cdots+\epsilon_{n-1}+\epsilon_n)/2$. 

Put $J=I\setminus \{n\}$. 
Let $\mf l$ be the Levi subalgebra of $\mf g$ associated to $\{\,\alpha_i\,|\, i\in J\,\}$ of type $A_{n-1}$. Then 
\begin{equation*}
\Phi^+=\Phi^+(J)\cup \Phi^+_J,
\end{equation*}
where $\Phi^+_J=\{\,\ep_i-\ep_j\,|\,1\leq i<j\leq n\,\}$ is the set of positive roots of $\mf l$ and $\Phi^+(J)=\{\,\ep_i+\ep_j\,|\,1\leq i<j\leq n\,\}$ is the set of roots of the nilradical $\mf u$ of the parabolic subalgebra of $\mf g$ associated to ${\mf l}$.

Throughout this paper, we consider a specific $\bi_0\in R(w_0)$,
whose associated convex order on $\Phi^+$ is given by
\begin{equation}\label{eq:convex order i_0}
\begin{split}
&\ep_i+\ep_j \prec \ep_k-\ep_l,\\
\ep_i+\ep_j \prec \ep_k+& \ep_l \quad \Longleftrightarrow \quad 
\text{$(j>l)$ or $(j=l$, $i>k)$}, \\
\ep_i-\ep_j \prec \ep_k-& \ep_l \quad \Longleftrightarrow \quad
\text{$(i<k)$ or $(i=k$, $j<l)$},
\end{split}
\end{equation}
for $1\leq i<j\leq n$ and $1\leq k<l\leq n$.
An explicit form of $\bi_0$ is as follows.
For $1\leq k\leq n-1$, put
{\allowdisplaybreaks
\begin{align*}
\bi_k & = 
\begin{cases}
(n,n-2,\ldots,k+1,k), & \text{if $k$ is odd},\\
(n-1,n-2,\ldots,k+1,k), & \text{if $k$ is even},\\
(n), & \text{if $n$ is even and $k=n-1$},
\end{cases}
\\
\bi'_k & =
\begin{cases}
(n-1,n-2,\ldots, k+1,k), & \text{if $n$ is even and $1\leq k\leq n-1$},\\
(n,n-2,\ldots, k+1,k), & \text{if $n$ is odd and $1\leq k\leq n-2$},\\
(n), & \text{if $n$ is odd and $k=n-1$}.
\end{cases}
\end{align*}}
Let 
$\bi^J = \bi_1 \cdot \bi_2 \cdot \cdots \cdot \bi_{n-1}$ and 
$\bi_J = \bi'_1 \cdot \bi'_2 \cdots \cdot \bi'_{n-1}$. Then
\begin{equation} \label{eq:rx}
\bi_0 = \bi^J\cdot \bi_J,
\end{equation}
where $\bi\cdot \bj$ denotes the concatenation of $\bi\in I^r$ and $\bj \in I^s$.
We write $\bi_0=(i_1,\ldots,i_N)$,
where $i_1=n$, and put $\bi^J=(i_1,\ldots,i_M)$, and $\bi_J=(i_{M+1},\ldots,i_N)$ with $N=n^2-n$ and  $M=N/2$.

\begin{ex}\label{ex:i_0}
{\rm 
We have 
\begin{equation*}
\begin{split}
&\bi^J=(4,2,1,3,2,4),\quad\quad\quad\quad\ \ \bi_J=(3,2,1,3,2,3),\quad\quad\quad\quad\quad  \text{when $n=4$},\\
&\bi^J=(5,3,2,1,4,3,2,5,3,4),\quad \bi_J=(5,3,2,1,5,3,2,5,3,5),\, \quad \text{when $n=5$}.
\end{split}
\end{equation*}
The associated convex order when $n=4$ is
\begin{equation*}
\begin{split}
&\ep_3+\ep_4\prec
\ep_2+\ep_4\prec
\ep_1+\ep_4\prec
\ep_2+\ep_3\prec
\ep_1+\ep_3\prec
\ep_1+\ep_2 \\ 
& \prec \ep_1-\ep_2
\prec \ep_1-\ep_3
\prec \ep_1-\ep_4
\prec \ep_2-\ep_3
\prec \ep_2-\ep_4
\prec \ep_3-\ep_4.
\end{split}
\end{equation*}
}
\end{ex}

Throughout the paper, we set
$$
\B:=\B_{\bi_0}.
$$
For ${\bf c}=(c_\beta)\in \B$, we also write 
\begin{equation*}
c_{\beta_k} = 
\begin{cases}
c_{\ov{j}\ov{i}}, & \text{if $\beta_k=\ep_i+\ep_j$ for $1\leq i<j\leq n$}, \\
c_{j\ov{i}} , & \text{if $\beta_k=\ep_i-\ep_j$ for $1\leq i<j\leq n$}.
\end{cases}
\end{equation*}

\begin{prop}\label{prop:signature for type D}
For $i \in I \setminus \{ n \}$, there exists a reduced expression $\bi \in R(w_0)$, which is equal to $\bi_0$ up to $2$-term braid moves, such that 
\begin{itemize}
	\item[(1)] $\bi$ is simply braided for $i$,
	\item[(2)] for ${\bf c} \in {\bf B}_{\bi}$,  $\sigma_i({\bf c})$ \eqref{eq:sigma_i} is given by
\end{itemize}
\begin{equation*}
\begin{split}
\sigma_i({\bf c})=& 
\sigma_{i,1}({\bf c})\cdot \sigma_{i,2}({\bf c})\cdot \sigma_{i,3}({\bf c}),
\end{split}
\end{equation*}
where
\begin{equation} \label{eq:sigma}
\begin{split}
\sigma_{i,1}({\bf c})=&
(
\underbrace{-\cdots -}_{c_{\ov{n}\,\ov{i}}}\,\underbrace{+\cdots +}_{c_{\ov{n}\,\ov{i+1}}}
\underbrace{-\cdots -}_{c_{\ov{n-1}\,\ov{i}}}\,\underbrace{+\cdots +}_{c_{\ov{n-1}\,\ov{i+1}}}
\ \cdots \
\underbrace{-\cdots -}_{c_{\ov{i+2}\,\ov{i}}}\,\underbrace{+\cdots +}_{c_{\ov{i+2}\,\ov{i}}}), \\
\sigma_{i,2}({\bf c})=& (\underbrace{-\cdots -}_{c_{\ov{i}\,\ov{i-1}}}\,\underbrace{+\cdots +}_{c_{\ov{i+1}\,\ov{i-1}}}
\underbrace{-\cdots -}_{c_{\ov{i}\,\ov{i-2}}}\,\underbrace{+\cdots +}_{c_{\ov{i+1}\,\ov{i-2}}}
\ \cdots \
\underbrace{-\cdots -}_{c_{\ov{i}\,\ov{1}}}\,\underbrace{+\cdots +}_{c_{\ov{i+1}\,\ov{1}}}
), \\
\sigma_{i,3}({\bf c})=& (\underbrace{-\cdots -}_{c_{{i+1}\,\ov{1}}}\,\underbrace{+\cdots +}_{c_{{i}\,\ov{1}}} \underbrace{-\cdots -}_{c_{{i+1}\,\ov{2}}}\,\underbrace{+\cdots +}_{c_{{i}\,\ov{2}}}\ \cdots \ \underbrace{-\cdots -}_{c_{{i+1}\,\ov{i-1}}}\,\underbrace{+\cdots +}_{c_{{i}\,\ov{i-1}}} \underbrace{-\cdots -}_{c_{{i+1}\,\ov{i}}}).
\end{split}
\end{equation}
Here we assume that $c_{ab}$ is zero when it is not defined.
\end{prop}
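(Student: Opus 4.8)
\textbf{The plan} is to produce, for each $i\in I\setminus\{n\}$, an explicit reduced expression $\bi$ related to $\bi_0$ by $2$-term moves that is simply braided for $i$, and then to read off $\sigma_i(\mathbf c)$ directly from the structure of its $3$-term braid moves. Since a $2$-term braid move corresponds to transposing two orthogonal roots in the convex order and leaves the Lusztig datum combinatorics intact (it only reorders commuting factors), I may freely permute orthogonal roots to bring $\bi_0$ into a convenient form. The key is that simply braided for $i$ means I can move $\alpha_i$ to the front of the word using only $2$-term moves together with $3$-term moves of the special shape $(\gamma,\gamma',\gamma'')\to(\gamma'',\gamma',\gamma)$ with $\gamma''=\alpha_i$; each such $3$-term move contributes exactly one triple $\Pi_s=\{\gamma_s,\gamma'_s,\alpha_i\}$ of type-$A_2$ roots, and $\sigma_i(\mathbf c)$ by \eqref{eq:sigma_i} is the concatenation over these triples of $c_{\gamma'_s}$ copies of $-$ followed by $c_{\gamma_s}$ copies of $+$.

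First I would enumerate, for fixed $i$, all type-$A_2$ triples $\{\gamma,\gamma',\gamma''\}$ with $\gamma''=\alpha_i=\ep_i-\ep_{i+1}$ and $\gamma'=\gamma+\alpha_i$. These come in three families according to which block of the convex order the roots lie in. Writing a root as $\ep_a+\ep_b$ or $\ep_a-\ep_b$, the condition $\gamma'=\gamma+\alpha_i$ forces: (a) in the $\Phi^+(J)$ block, $\gamma=\ep_{i+1}+\ep_c$, $\gamma'=\ep_i+\ep_c$ for the relevant range of $c$; (b) a mixed contribution where $\gamma,\gamma'$ lie across the two blocks; and (c) in the $\Phi^+_J$ block, $\gamma=\ep_c-\ep_i$, $\gamma'=\ep_c-\ep_{i+1}$ (equivalently $\gamma'=\gamma+\alpha_i$ since $(\ep_c-\ep_{i+1})-(\ep_c-\ep_i)=\ep_i-\ep_{i+1}=\alpha_i$). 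Matching these against the indexing in \eqref{eq:sigma} and the notation $c_{\bar j\bar i}$, $c_{j\bar i}$ for the entries, the three families will correspond precisely to $\sigma_{i,1}$, $\sigma_{i,2}$, and $\sigma_{i,3}$. The ordering of the triples within each $\sigma_{i,m}$ must match the order in which the corresponding $3$-term moves are applied when sweeping $\alpha_i$ to the front, which is dictated by the convex order \eqref{eq:convex order i_0}.

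The main technical step, and the hardest part, will be verifying that $\bi_0$ (after the allowed $2$-term reordering) is genuinely \emph{simply braided} for each $i$, i.e.\ that the sequence of braid moves carrying $\alpha_i$ to the initial position uses \emph{only} $2$-term moves and $3$-term moves of the prescribed form, with no $3$-term move of any other type intervening. Concretely I would argue inductively along the convex order: at each stage the root $\alpha_i$ (or its current image) is adjacent to a root $\gamma$ with which it forms a type-$A_2$ triple exactly when $\gamma\pm\alpha_i\in\Phi^+$, and is orthogonal to all others, so only the two permitted move types are ever needed. This requires checking that no type-$B_2/G_2$ configurations arise (automatic since $D_n$ is simply laced, so only $2$- and $3$-term moves occur) and that the relative positions in \eqref{eq:convex order i_0} never force an obstructing non-$\alpha_i$ $3$-term move. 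The parity casework in the definition of $\bi_k,\bi'_k$ is a bookkeeping nuisance here but does not affect the underlying root-theoretic combinatorics, so I would phrase the verification intrinsically in terms of the convex order rather than the explicit letters of $\bi_0$.

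Finally, having identified the triples $\Pi_s$ and their order, I would substitute into \eqref{eq:sigma_i}. For each triple in family (a) the contribution is $c_{\gamma'_s}$ minus-signs followed by $c_{\gamma_s}$ plus-signs, which with $\gamma'_s=\ep_i+\ep_c$ and $\gamma_s=\ep_{i+1}+\ep_c$ becomes the block $\underbrace{-\cdots-}_{c_{\bar c\,\bar i}}\,\underbrace{+\cdots+}_{c_{\bar c\,\overline{i+1}}}$ appearing in $\sigma_{i,1}$; the analogous substitutions give $\sigma_{i,2}$ and $\sigma_{i,3}$, and reading the families in the convex order produces the concatenation $\sigma_{i,1}(\mathbf c)\cdot\sigma_{i,2}(\mathbf c)\cdot\sigma_{i,3}(\mathbf c)$. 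The convention that $c_{ab}=0$ when undefined absorbs the degenerate triples at the ends of each range. Once the simply braided property and the enumeration of triples are established, this last substitution is a direct matching of indices to \eqref{eq:sigma}, completing the proof.
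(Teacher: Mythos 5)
Your overall strategy coincides with the paper's: move $\alpha_i$ to the front of the word by $2$-term moves and $3$-term moves of the prescribed shape, record the type-$A_2$ triples containing $\alpha_i$ that arise, and read off $\sigma_i({\bf c})$ from Theorem \ref{thm:signature rule}. However, there are two concrete problems with the proposal as written.

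First, your enumeration of the triples is wrong. Since $\gamma'=\gamma+\alpha_i$ and $\alpha_i=\ep_i-\ep_{i+1}$, adding $\alpha_i$ to a root $\ep_a+\ep_b$ yields another root of sum type and adding it to $\ep_a-\ep_b$ yields another root of difference type, so there is no ``mixed contribution where $\gamma,\gamma'$ lie across the two blocks.'' The correct three families are: $\{\ep_c+\ep_{i+1},\,\ep_c+\ep_i,\,\alpha_i\}$ with $c\geq i+2$ (giving $\sigma_{i,1}$), the same shape with $c\leq i-1$ (giving $\sigma_{i,2}$), and $\{\ep_c-\ep_i,\,\ep_c-\ep_{i+1},\,\alpha_i\}$ with $c\leq i-1$ (giving $\sigma_{i,3}$). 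Your phantom family (b) cannot produce $\sigma_{i,2}$, whose entries $c_{\ov{i}\,\ov{s}}$ and $c_{\ov{i+1}\,\ov{s}}$ are multiplicities of roots lying entirely in $\Phi^+(J)$.

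Second, and more seriously, the verification of simply-braidedness is exactly where the content of the proposition lies, and your inductive argument elides it. For a $3$-term braid move to be available, the three roots of the $A_2$ triple must occupy three \emph{consecutive} positions in the convex order with $\gamma'$ between $\gamma$ and $\alpha_i$; it is not enough that $\gamma\pm\alpha_i\in\Phi^+$. In $\bi_0$ itself this fails: for $n=5$ and $i=3$ the triple $\{\ep_2+\ep_4,\ep_2+\ep_3,\alpha_3\}$ has $\ep_1+\ep_4$ sitting between $\ep_2+\ep_4$ and $\ep_2+\ep_3$ in the order \eqref{eq:convex order i_0}. This is precisely why the proposition only asserts the existence of some $\bi$ equal to $\bi_0$ up to $2$-term moves: the paper's proof (Step 3) constructs a specific shuffle $\ov{\bi}_j\cdot\ov{\bi}_{j+1}$ of $\bi_j\cdot\bi_{j+1}$ that interleaves the roots $\ep_s+\ep_{i+1}$ and $\ep_s+\ep_i$ so that each triple becomes consecutive, and then tracks the sweep of $\alpha_i$ through the word block by block. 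You note that you ``may freely permute orthogonal roots,'' but you never exhibit the permutation nor check that, after it, every root encountered while moving $\alpha_i$ forward is either orthogonal to $\alpha_i$ or the outer member of a correctly positioned triple. Without that explicit construction (or an equally explicit intrinsic substitute), the proof is incomplete at its central step.
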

\pf
We assume that $n$ is even since the proof for $n$ odd is almost the same. We fix $i\in I\setminus\{n\}$.

{\em Step 1}. We first observe that if the first letter $n-1$ in $\bi'_i$ corresponds to $i_k$ in $\bi_0$ for some $k$, then $\beta_k=\alpha_i$.

{\em Step 2}. Let $i_k=n-1$ be as in {\em Step 1}. Suppose that $i\neq 1$.
Then we can apply $2$-term move or $3$-term braid move $(\gamma,\gamma',\gamma'')\rightarrow (\gamma'',\gamma',\gamma)$ with $\gamma''=\alpha_i$ to $\bi_0$ 
(indeed to the subword $\bi'_1\cdot\bi'_2\cdots\bi'_{i-1}\cdot i_k$)
to get $\bi^{(3)}_0=(i_1,\ldots,i_M,j,\ldots)$ with $j=n-i$. 
We can check that $3$-term braid move occurs once in each $\bi'_s$ for $s=1,\ldots,i-1$, and the positive roots of the corresponding root system of type $A_2$ is 
\begin{equation} \label{eq:A_2-3}
\Pi_s^{(3)} = \{ \ep_{s}-\ep_{i}, \ep_{s}-\ep_{i+1}, \alpha_i \}
\end{equation} for $s = 1, \cdots, i-1$. We assume that $\Pi_s^{(3)}$ is empty when $i=1$.

{\em Step 3}. We consider the reduced word $\bi^{(3)}_0$. 
Suppose that $i\neq 1$ or $j\neq n$. 
First, we apply 2-moves only to $\bi_j\cdot\bi_{j+1}$ so that the last $i-2$ letters in $\bi_j$ and the first $i-2$ letters are shuffled by a permutation of length $(i-2)(i-1)/2$ and hence appear in an alternative way. We denote this subword by $\ov{\bi}_j\cdot\ov{\bi}_{j+1}$.

Then we apply $2$-term move or $3$-term braid move $(\gamma,\gamma',\gamma'')\rightarrow (\gamma'',\gamma',\gamma)$ with $\gamma''=\alpha_i$
to the subword $\ov{\bi}_j\cdot\ov{\bi}_{j+1}\cdot\bi_{j+2} \cdots \bi_{n-1}\cdot j$ 
to obtain a word starting with $j''$ where $j''$ is $n$ (resp. $n-1$) when $j$ is odd (resp. even).
We denote the resulting whole word by $\bi_0^{(2)}$.

Here we have $i-1$ $3$-term braid moves only in $\ov{\bi}_j\cdot\ov{\bi}_{j+1}$
and the positive roots of the corresponding root system of type $A_2$ is 
\begin{equation} \label{eq:A_2-2}
\Pi_s^{(2)} = \{ \ep_{i+1}+\ep_{s}, \ep_{i}+\ep_{s}, \alpha_i \}
\end{equation} for $1\leq s\leq i-1$, and the order of occurrence of 3-term braid move is when $s$ ranges from 1 to $i-1$.
If $i=1$, then we assume that $\Pi_s^{(2)}$ is empty, and $\bi_0^{(2)}=\bi_0^{(3)}$.

{\em Step 4}. Finally, we apply $2$-term move or $3$-term braid move $(\gamma,\gamma',\gamma'')\rightarrow (\gamma'',\gamma',\gamma)$ with $\gamma''=\alpha_i$
to the subword ${\bi}_1\cdots {\bi}_{j}\cdot j''$ of $\bi_0^{(2)}$ to obtain a word starting with $i$, and denote the resulting whole word by $\bi_0^{(1)}$.
In this case, $3$-term braid move occurs once in each $\bi_s$ for $s=1,\ldots,j-1$, and the positive roots of the corresponding root system of type $A_2$ is 
\begin{equation} \label{eq:A_2-1}
\Pi_s^{(1)} = \{ \ep_{n-s+1}+\ep_{i+1}, \ep_{n-s+1}+\ep_{i}, \alpha_i \}
\end{equation} for $s = 1, \cdots, j-1$.

By the above steps, we conclude that $\bi^{(1)}_0\in R(w_0)$ is obtained from $\bi_0$ by applying $2$-term move or $3$-term braid move $(\gamma,\gamma',\gamma'')\rightarrow (\gamma'',\gamma',\gamma)$ with $\gamma''=\alpha_i$. 
We define 
\begin{equation*}
\bi := \ov{\bi}^J \cdot \bi_J,
\end{equation*} where $\ov{\bi}^J = \bi_1 \cdot \cdots \cdot \ov{\bi}_j \cdot \ov{\bi}_{j+1} \cdot \cdots \bi_{n-1}$ (cf. \eqref{eq:rx}) and $\ov{\bi}_j \cdot \ov{\bi}_{j+1}$ is obtained from $\bi_j \cdot \bi_{j+1}$ in {\em Step 3}. By {\em Step 1}--{\em Step 4}, the reduced expression $\bi$ is simply braided for $i$.
It follows from Theorem \ref{thm:signature rule} that the sequence $\sigma_i({\bf c})$ in \eqref{eq:sigma_i} for ${\bf c}\in \B_{\bi}$ is given by \eqref{eq:sigma}, where the positive roots of the root systems of type $A_2$ associated to $\sigma_{i,j}(\bc)$ are given by $\Pi_s^{(j)}$ for $j=1,2,3$ in \eqref{eq:A_2-1}, \eqref{eq:A_2-2}, and \eqref{eq:A_2-3}.
\qed

\begin{rem}
{\rm For $i \in I \setminus \{ n \}$, the crystal operator $\tf_i$ on $\B$ may be understood by 
\begin{equation*} \label{eq:ith crystal operator on B}
\xymatrixcolsep{3pc}\xymatrixrowsep{0pc}\xymatrix{
\B  \ \ar@{->}[r]^{R_{\bi_0}^{\bi}} & \ \B_{\bi} \ \ar@{->}[r]^{\tf_i} & \  \B_{\bi} \ \ar@{->}[r]^{R_{\bi}^{\bi_0}} & \ \B},
\end{equation*} where $R_{\bi_0}^{\bi}$ (resp. $R^{\bi_0}_{\bi}$) is the transition map from $\B$ to $\B_{\bi}$ (resp. from $\B_{\bi}$ to $\B$) (cf. \cite{Lu90}). The map $R_{\bi_0}^{\bi}$ corresponds to the $2$-term braid moves from $\bi_j \cdot \bi_{j+1}$ to $\ov{\bi}_j\cdot\ov{\bi}_{j+1}$ (see {\em Step 3} in the proof of Proposition \ref{prop:signature for type D}), which is simply given by exchanging the multiplicities related to them, and the map $R_{\bi}^{\bi_0}$ is the inverse of it. Therefore, the crystal operator $\tf_i$ on $\B$ can be described in the same way as in Theorem \ref{thm:signature rule} with $\sigma_i({\bf c})$ in Proposition \ref{prop:signature for type D}. }
\end{rem}

\begin{ex} \label{ex:cal_sig_rank5}
{\rm Let us illustrate $\sigma_i({\bf c})$ for ${\bf c} \in \B$ when $n=5$ and $i=3$.  
Consider $\bi_0 = \bi^J \cdot \bi_J=(i_1,\ldots,i_{20})$ (see Example \ref{ex:i_0}).
Note that $i_{18}=5$ is the first letter in $\bi'_3$, and $\beta_{18}=\alpha_3$.

For convenience, let ${\rightsquigarrow}$ (resp. ${\longrightarrow}$) mean the 3-term (resp. 2-term) braid move. Then
\begin{equation*}
\begin{split}
\bi_J=(5,3,2,1,5,3,2,{\bf 5},3,5) 
&\ {\longrightarrow}\ (5,3,2,1,5,3,{\bf 5},2,3,5) \\
&\ {\rightsquigarrow}\ (5,3,2,1,{\bf 3},5,3,2,3,5) \quad \cdots \quad \Pi_2^{(3)}\\
&\ {\longrightarrow}\ (5,3,2,{\bf 3},1,5,3,2,3,5) \\
&\ {\rightsquigarrow}\ (5,{\bf 2},3,2,1,5,3,2,3,5) \quad \cdots \quad  \Pi_1^{(3)}\\
&\ {\longrightarrow}\ ({\bf 2},5,3,2,1,5,3,2,3,5),
\end{split}
\end{equation*} 
where 
$\Pi_2^{(3)} = \{ \ep_{2}-\ep_{3}, \ep_{2}-\ep_{4}, \alpha_{3} \}$ 
and 
$\Pi_1^{(3)} = \{ \ep_{1}-\ep_{3}, \ep_{1}-\ep_{4}, \alpha_{3} \}$.
Here the bold letter denotes the one corresponding to $\alpha_3$ in the associated convex order on $\Phi^+$.

Next, we have 
$\bi_2\cdot \bi_3 = (4,3,{\bf 2}, {\bf 5},3)\ {\longrightarrow}\ \ov{\bi}_2\cdot\ov{\bi}_3=(4,3, {\bf 5}, {\bf 2},3)$, and hence
\begin{equation*}
\begin{split}
\ov{\bi}_2\cdot\ov{\bi}_3\cdot {\bf 2} = (4,3,5,2,3, \bf{2}) 
& \ {\rightsquigarrow} \ (4,3,5,{\bf 3},2,3)\quad \cdots \quad \Pi_2^{(2)}  \\
& \ {\rightsquigarrow} \ (4,{\bf 5},3,5,2,3)\quad \cdots \quad \Pi_1^{(2)} \\ 
& \ {\longrightarrow} \ ({\bf 5},4,3,5,2,3),
\end{split}
\end{equation*} 
where $\Pi_2^{(2)} = \{ \ep_{1}+\ep_{4}, \ep_{1}+\ep_{3}, \alpha_{3} \}$ and $\Pi_1^{(2)} = \{  \ep_{2}+\ep_{4}, \ep_{2}+\ep_{3}, \alpha_{3} \}$.
Finally,
\begin{equation*}
\begin{split}
\bi_1\cdot {\bf 5} = (5,3,2,1, {\bf 5}) & \ {\longrightarrow} \ (5,3,2,{\bf 5},1) \\
& \ {\longrightarrow}\ (5,3,{\bf 5},2,1) \\
& \ {\rightsquigarrow}\ ({\bf 3},5,3,2,1) \quad \cdots \quad \Pi_1^{(1)},
\end{split}
\end{equation*} where $\Pi_1^{(3)} = \{ \ep_{4}+\ep_{5}, \ep_{3}+\ep_{5}, \alpha_{3} \}$.
Thus $\bi_0$ is simply braided for $i=3$, and 
\begin{equation*}
\begin{split}
\sigma_{3,1}({\bf c})&=
(\underbrace{\Tl{$-\cdots -$}}_{c_{\ov{5}\ov{3}}}\,\underbrace{\Tl{$+\cdots +$}}_{c_{\ov{5}\ov{4}}}),\\
\sigma_{3,2}({\bf c})&=
(\underbrace{\Tl{$-\cdots -$}}_{c_{\ov{3}\ov{2}}}\,\underbrace{\Tl{$+\cdots +$}}_{c_{\ov{4}\ov{2}}}
\underbrace{\Tl{$-\cdots -$}}_{c_{\ov{3}\ov{1}}}\,\underbrace{\Tl{$+\cdots +$}}_{c_{\ov{4}\ov{1}}}),\\
\sigma_{3,3}({\bf c})&=
(\underbrace{\Tl{$-\cdots -$}}_{c_{4\ov{1}}}\,\underbrace{\Tl{$+\cdots +$}}_{c_{3\ov{1}}}
\underbrace{\Tl{$-\cdots -$}}_{c_{4\ov{2}}}\,\underbrace{\Tl{$+\cdots +$}}_{c_{3\ov{2}}}
\underbrace{\Tl{$-\cdots -$}}_{c_{4\ov{3}}}),
\end{split}
\end{equation*}
for ${\bf c}\in \B$.
}
\end{ex}

Set
\begin{equation}
\begin{split}
\B^J &=\left\{\,{\bf c}=(c_{\beta})\in \B\,\big\vert \,c_{\beta}=0 \text{ unless $\beta\in \Phi^{+}(J)$}\,\right\},\\
\B_J &=\left\{\,{\bf c}=(c_{\beta})\in \B\,\big\vert \,c_{\beta}=0 \text{ unless $\beta\in \Phi_J$}\,\right\}. 
\end{split}
\end{equation}
which we regard them as subcrystals of $\B$, where we assume that 
$\te_n{\bf c}=\tf_n{\bf c}={\bf 0}$ with  
$\varepsilon_n({\bf c})=\varphi_n({\bf c})=-\infty$ for ${\bf c}\in \B_J$.
The subscrystal $\B^J$ is the crystal of the quantum nilpotent subalgebra $U^-_q(w^J)$, where $w^J=s_{i_1}\cdots s_{i_M}$ with $\bi^J=(i_1,\ldots,i_M)$, which can be viewed as a $q$-deformation of $U(\mf u^-)$.

\begin{cor} \label{tensor_decomposition}
\mbox{}
\begin{itemize}
\item[(1)] The crystal $\B_J$ is isomorphic to the crystal of $U^-_q(\mf l)$ as an ${\mf l}$-crystal.

\item[(2)] The map 
\begin{equation*}
\xymatrixcolsep{3pc}\xymatrixrowsep{0pc}\xymatrix{
\B \ar@{->}[r]  & \ \B^J \,\otimes \B_J \\
{\bf c}  \ar@{|->}[r] &   {\bf c}^J\otimes{\bf c}_{J}}
\end{equation*}
is an  isomorphism of $\mf g$-crystals.
\end{itemize}
\end{cor}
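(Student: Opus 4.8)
The plan is to read off both statements directly from the explicit signature description in Proposition \ref{prop:signature for type D}. The underlying bijection is immediate: since $\bi_0=\bi^J\cdot\bi_J$, a datum ${\bf c}=(c_\beta)\in\B=\Z_+^N$ splits as a pair $({\bf c}^J,{\bf c}_J)$, where ${\bf c}^J$ records the multiplicities on $\Phi^+(J)$ (the first $M$ coordinates) and ${\bf c}_J$ those on $\Phi_J$ (the last $M$ coordinates); this is the map ${\bf c}\mapsto{\bf c}^J\otimes{\bf c}_J$, and ${\rm wt}$ is additive under it. The crucial structural observation, visible in \eqref{eq:sigma}, is that for each $i\in J$ the prefix $\sigma_{i,1}({\bf c})\cdot\sigma_{i,2}({\bf c})$ depends only on the multiplicities $c_{\ov{b}\,\ov{a}}$ on $\Phi^+(J)$, hence only on ${\bf c}^J$, while $\sigma_{i,3}({\bf c})$ depends only on the multiplicities $c_{b\ov{a}}$ on $\Phi_J$, hence only on ${\bf c}_J$. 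Thus $\sigma_i({\bf c})=\sigma_i({\bf c}^J)\cdot\sigma_i({\bf c}_J)$ is a concatenation whose two blocks are governed by the two tensor factors.

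For (1), I would first note that if ${\bf c}\in\B_J$ then all $\Phi^+(J)$-multiplicities vanish, so $\sigma_i({\bf c})=\sigma_{i,3}({\bf c})$ for every $i\in J$. By the signature rule (Theorem \ref{thm:signature rule} together with Proposition \ref{prop:signature for type D}), $\tf_i$ either moves a box between two roots of $\Phi_J$ (when the leftmost surviving $+$ lies in $\sigma_{i,3}$) or adds ${\bf 1}_{\alpha_i}$ with $\alpha_i=\ep_i-\ep_{i+1}\in\Phi_J$; in either case $\tf_i{\bf c}\in\B_J$, and likewise for $\te_i$, so $\B_J$ is a subcrystal under the $J$-operators. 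It then remains to recognize the restricted rule as that of $\mf l$: the order \eqref{eq:convex order i_0} restricted to $\Phi_J$ is a convex order for the type $A_{n-1}$ root system, and $\sigma_{i,3}$ is exactly the sequence produced by the construction \eqref{eq:sigma_i} applied to $\mf l$ for a reduced word of the longest element of $W_{\mf l}$ inducing this order. Hence $\B_J$ is isomorphic, as an $\mf l$-crystal, to the PBW crystal of $\mf l$, i.e. to $B_{\mf l}(\infty)$, the crystal of $U^-_q(\mf l)$.

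For (2), additivity of ${\rm wt}$ and bijectivity being in hand, it remains to check that $\te_i,\tf_i$ commute with ${\bf c}\mapsto{\bf c}^J\otimes{\bf c}_J$ for all $i\in I$. For $i=n$ we have $\beta_1=\alpha_n=\ep_{n-1}+\ep_n\in\Phi^+(J)$, so by \eqref{eq:f} the operator $\tf_n$ simply increases the corresponding coordinate of ${\bf c}^J$, while $\te_n$ decreases it; since $\varepsilon_n\equiv\varphi_n\equiv-\infty$ on $\B_J$ by convention, the tensor product rule forces $\tf_n,\te_n$ to act on the $\B^J$-factor, matching $\B$. For $i\in J$, by the first paragraph $\sigma_i({\bf c})$ is the concatenation of the $i$-signature of ${\bf c}^J$ in $\B^J$ and that of ${\bf c}_J$ in $\B_J$. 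Since the tensor product rule is precisely the signature rule applied to such a concatenation, the operator $\tf_i$ on $\B$ modifies the ${\bf c}^J$-block exactly when it acts on the first tensor factor, and modifies the ${\bf c}_J$-block (including the terminal $+{\bf 1}_{\alpha_i}$ case) exactly when it acts on the second; the same holds for $\te_i$. This yields the desired $\g$-crystal isomorphism.

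The main obstacle is the verification in (2) that the PBW signature rule matches the tensor product rule under the paper's sign conventions. Concretely, one must check that $\varphi_i({\bf c}^J)$ and $\varepsilon_i({\bf c}_J)$ equal the numbers of $+$'s and $-$'s in the reduced words $\sigma_i({\bf c}^J)^{\rm red}$ and $\sigma_i({\bf c}_J)^{\rm red}$, so that the comparison of $\varphi_i({\bf c}^J)$ with $\varepsilon_i({\bf c}_J)$ in the tensor rule determines, through cancellation of the $+$'s of the first block against the $-$'s of the second, exactly which block retains the leftmost surviving $+$. The transitional case — when all $+$'s of the ${\bf c}^J$-block are cancelled so that $\tf_i$ must pass to the ${\bf c}_J$-factor and, if no $+$ survives there either, adds ${\bf 1}_{\alpha_i}$ — is the delicate point, and must be reconciled with case (2) of Theorem \ref{thm:signature rule}; once this bookkeeping is settled, both parts follow.
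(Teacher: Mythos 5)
Your proposal is correct and takes essentially the same route as the paper: the paper's proof of (2) is exactly "Theorem \ref{thm:signature rule}, Proposition \ref{prop:signature for type D}, and the tensor product rule," which is the block decomposition $\sigma_i({\bf c})=\sigma_i({\bf c}^J)\cdot\sigma_i({\bf c}_J)$ you exploit (plus \eqref{eq:f} for $i=n$), and its proof of (1) is the same comparison of the restricted signature rule with the known type $A$ PBW crystal structure (cited from Saito). The bookkeeping you flag at the end ($\varphi_i({\bf c}^J)$ and $\varepsilon_i({\bf c}_J)$ being the surviving $+$ and $-$ counts of the respective blocks) is the content the paper leaves implicit, and it does go through.
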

\pf (1) It follows directly from comparing the crystal structure of $U_q^-(\mf l)$ given in  \cite[Section 4.1]{S12} (see also \cite[Section 4.2]{K18}).

(2) It follows from Theorem \ref{thm:signature rule}, Proposition \ref{prop:signature for type D}, and the tensor product rule of crystals.
\qed

\subsection{Crystal $\B^J$ of quantum nilpotent subalgebra} \label{section:Delta_n}
Let us consider the subcrystal $\B^J$ in more details.  
Let $\Delta_n$ be the arrangements of dots in the plane to represent the $(n-1)$-th triangular number. 
We often identify $\Delta_n$ with $\Phi^+(J)$ in such a way that $\ep_{k+1}+\ep_{l+1}$, $\ep_{k+1}+\ep_{l}$ and $\ep_{k}+\ep_{l}$ for $1\leq k,l\leq n-1$ are the vertices of a triangle of minimal shape in $\Delta_n$ as follows:
\begin{equation}\label{eq:small triangle}
\xymatrixcolsep{-1.5pc}\xymatrixrowsep{0.5pc}\xymatrix{
& &    & & \stackrel{{\ep_k+\ep_{l+1}}}{\bullet} & & &  & \\
&  & & \stackrel{{\ep_{k+1}+\ep_{l+1}}}{\bullet} & &  \ {}\stackrel{{\ep_k+\ep_l}}{\bullet}  }
\end{equation}
We also identify ${\bf c}\in \B^J$ with an array of $c_{\beta}$'s in ${\bf c}$ with $c_\beta$ at the corresponding dot in $\Delta_n$.

\begin{ex}\label{ex:Delta_n}
{\rm For $n=5$ and $\bc\in \B^J$, we have
\begin{equation*} 
\xymatrixcolsep{-0.9pc}\xymatrixrowsep{0.0pc}
\xymatrix{
& & & & \stackrel{{}_{\ep_1+\ep_5}}{\bullet} & & &  & \\
\Delta_5=\quad & & & \stackrel{{}_{\ep_2+\ep_5}}{\bullet} & &  {}\stackrel{{}_{\ep_1+\ep_4}}{\bullet} & & \\
& & \stackrel{{}_{\ep_3+\ep_5}}{\bullet}  & & \stackrel{{}_{\ep_2+\ep_4}}{\bullet} &  & \stackrel{{}_{\ep_1+\ep_3}}{\bullet}  & \\  
& \stackrel{{}_{\ep_4+\ep_5}}{\bullet} & & \stackrel{{}_{\ep_3+\ep_4}}{\bullet} & & \stackrel{{}_{\ep_2+\ep_3}}{\bullet} & & \stackrel{{}_{\ep_1+\ep_2}}{\bullet}}
\quad\quad
\xymatrixcolsep{-0.2pc}\xymatrixrowsep{0.3pc}\xymatrix{
& & & & c_{\ov{5}\ov{1}} & & & & \\
\bc= & & & c_{\ov{5}\ov{2}} & & c_{\ov{4}\ov{1}} & & \\
& & c_{\ov{5}\ov{3}} & & c_{\ov{4}\ov{2}} & & c_{\ov{3}\ov{1}} & \\  
& c_{\ov{5}\ov{4}} & & c_{\ov{4}\ov{3}} & & c_{\ov{3}\ov{2}} & & c_{\ov{2}\ov{1}}}
\end{equation*}\vskip 3mm
}
\end{ex}

\begin{lem}\label{lem:B^J}
We have
$\B^J=\{\,{\bf c}\,|\,\varepsilon_i^*({\bf c})=0\ (i\in J)\,\}$.
\end{lem}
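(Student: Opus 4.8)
The plan is to reduce the statement to the uniqueness of the highest weight vector in the Levi crystal $\B_J$, the key point being that for $i\in J$ the star operators $\te_i^*,\tf_i^*$ see only the Levi component ${\bf c}_J$ of an element ${\bf c}\in\B$.

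First I would record a signature description of $\tf_i^*$ (equivalently of $\varepsilon_i^*$) for $i\in J$ that is dual to Proposition \ref{prop:signature for type D}. As indicated by \eqref{eq:f}, the operator $\tf_i^*$ is governed by the \emph{tail} of the reduced word rather than its head: one makes $\bi_0$ simply braided for $i$ by moving $\alpha_i$ to the end through a sequence of $2$- and $3$-term braid moves, and reads off the resulting signature $\sigma_i^*({\bf c})$ from the associated type-$A_2$ triples via the $*$-analogue of Theorem \ref{thm:signature rule}. The $3$-term triples $\{\gamma,\gamma',\alpha_i\}$ with $\gamma'=\gamma+\alpha_i$ that occur are exactly those among the roots \emph{succeeding} $\alpha_i$ in the convex order \eqref{eq:convex order i_0}.

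The key observation is that all such triples lie in $\Phi_J^+$. Indeed, by \eqref{eq:convex order i_0} every root of $\Phi^+(J)$ precedes every root of $\Phi_J^+$; since $\alpha_i\in\Phi_J^+$, the roots succeeding it all lie in $\Phi_J^+$ as well, so every triple contributing to $\sigma_i^*({\bf c})$ is contained in $\Phi_J^+$. These are precisely the $A_2$-triples of the Levi $\mf l$, whence $\sigma_i^*({\bf c})$ involves only the coordinates $c_\beta$ with $\beta\in\Phi_J^+$, that is, only ${\bf c}_J$, and it coincides with the signature computing $\varepsilon_i^*$ on the subcrystal $\B_J$. Consequently $\te_i^*,\tf_i^*$ ($i\in J$) fix the $\Phi^+(J)$-part ${\bf c}^J$ and act on ${\bf c}_J$ as the star operators of $\B_J$; in particular $\varepsilon_i^*({\bf c})=\varepsilon_i^*({\bf c}_J)$.

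Granting this, the lemma follows quickly. By Corollary \ref{tensor_decomposition}(1), $\B_J$ is the crystal of $U_q^-(\mf l)$, and the unique element of any $B(\infty)$ with $\varepsilon_i^*=0$ for all $i$ is the highest weight vector; applied to $\mf l$ this is the zero datum. Therefore $\varepsilon_i^*({\bf c})=0$ for all $i\in J$ if and only if ${\bf c}_J={\bf 0}$, which is exactly the condition ${\bf c}\in\B^J$. The main obstacle is the first step — carrying out the tail braid-move analysis to pin down $\sigma_i^*({\bf c})$ and to confirm that $\bi_0$ is simply braided for $i$ in the dual sense — which is the mirror image of the bookkeeping in Proposition \ref{prop:signature for type D}. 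As a consistency check, for $i=n-1$ one has $\beta_N=\alpha_{n-1}$, so \eqref{eq:f} gives $\tf_{n-1}^*{\bf c}={\bf c}+{\bf 1}_{\alpha_{n-1}}$ and hence $\varepsilon_{n-1}^*({\bf c})=c_{n\ov{n-1}}$, a single $\Phi_J^+$-coordinate, exactly as predicted.
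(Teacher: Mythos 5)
Your proposal is correct and follows essentially the same route as the paper: the paper's one-line proof (citing \cite[Section 2.1]{Lu90} and \eqref{eq:f}) rests on exactly the two facts you isolate, namely that for $i\in J$ the root $\alpha_i$ can be braided to the end of $\bi_0$ entirely within the tail $\bi_J$ (so $\varepsilon_i^*$ depends only on ${\bf c}_J$ and the $\Phi^+(J)$-coordinates are untouched), and that the unique element of $B(\infty)$ for $\mf l$ with all $\varepsilon_i^*$ vanishing is the highest weight element. Your convexity observation that every root succeeding $\alpha_i$ in \eqref{eq:convex order i_0} lies in $\Phi_J^+$ is the right justification for the first fact, and the remaining tail bookkeeping you defer is the routine type-$A$ analogue of Proposition \ref{prop:signature for type D}.
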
 
\pf It follows from  \cite[Section 2.1]{Lu90} and \eqref{eq:f}.
\qed

For $s\geq 1$, let 
\begin{equation}\label{eq:B^J_s-1}
\B^{J,s}:=\{\,{\bf c}\in \B^J\,|\,\varepsilon_n^*({\bf c})\leq s\,\},
\end{equation} 
which is a subcrystal of $\B^J$.
By Lemma \ref{lem:B^J} and \cite[Proposition 8.2]{Kas95} (cf. \cite{Kas93}), we have 
\begin{equation}\label{eq:B^J_s-2}
B(s\varpi_n)\cong \B^{J,s}\otimes T_{s\varpi_n},\quad\quad \bigcup_{s\geq 1}\B^{J,s}=\B^J,
\end{equation}
as $\mf g$-crystals.

By \eqref{eq:B^J_s-2}, $\B^J$ is a regular $\mf l$-crystal, that is, any connected component with respect to $\te_i$ and $\tf_i$ for $i\in J$ is isomorphic to the crystal of an integrable highest weight $U_q(\mf l)$-module, say $B_J(\la)$ for some $\la =\sum_{i=1}^n\la_i\ep_i\in P$ with $\langle\la, h_i\rangle\ge 0$ for $i\in J$.

\begin{ex}{\rm 
By Proposition \ref{prop:signature for type D}, one can easily describe $\te_i$ and $\tf_i$ for $i\in J$ based on tensor product rule.
When $n=5$ and $i=3$, we have for ${\bf c}\in\B^J$,
\begin{equation*} 
\sigma_{3}({\bf c})=
(\underbrace{-\cdots -}_{c_{\ov{5}\ov{3}}}\,\underbrace{+\cdots +}_{c_{\ov{5}\ov{4}}}
\underbrace{-\cdots -}_{c_{\ov{3}\ov{2}}}\,\underbrace{+\cdots +}_{c_{\ov{4}\ov{2}}}
\underbrace{-\cdots -}_{c_{\ov{3}\ov{1}}}\,\underbrace{+\cdots +}_{c_{\ov{4}\ov{1}}}),
\end{equation*} 
(see Example \ref{ex:cal_sig_rank5}) and the multiplicities $c_{ab}$ involved in $\sigma_{3}({\bf c})$ are indicated below in $\Delta_5$
\begin{equation*} 
\xymatrixcolsep{-0.2pc}\xymatrixrowsep{0.3pc}\xymatrix{
& & & & \bullet & & & & \\
& & & \bullet & & c_{\ov{4}\ov{1}} & & \\
& & c_{\ov{5}\ov{3}} & & c_{\ov{4}\ov{2}} & & c_{\ov{3}\ov{1}} & \\  
& c_{\ov{5}\ov{4}} & & \bullet & & c_{\ov{3}\ov{2}} & & \bullet}
\end{equation*}\vskip 3mm
}
\end{ex}

Proposition \ref{prop:signature for type D} enables us to decompose $\B^J$ into $\mf l$-crystals directly as follows, and hence the decomposition of $U_q(w^J)$ into irreducible $U_q(\mf l)$-modules.
\begin{prop}\label{prop:decomp of B^J}
As an $\mf l$-crystal, we have
\begin{equation*}
\B^J \cong \bigsqcup_{\la}B_J(\la),
\end{equation*}
where the union is over $\la =\sum_{i=1}^n\la_i\ep_i\in P$ such that $0\ge \la_1=\la_2\ge \la_3=\la_4\ge \cdots$. 
\end{prop}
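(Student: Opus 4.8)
The goal is to decompose $\B^J$ into $\mf l$-crystals, where $\mf l$ is of type $A_{n-1}$. The plan is to use the highest weight characterization of $\mf l$-crystal components together with the explicit signature rule from Proposition \ref{prop:signature for type D}. Since $\B^J$ is a regular $\mf l$-crystal by \eqref{eq:B^J_s-2}, each connected component under $\te_i,\tf_i$ for $i\in J$ is isomorphic to some $B_J(\la)$, so it suffices to identify the $\mf l$-highest weight elements (those ${\bf c}\in \B^J$ with $\te_i{\bf c}={\bf 0}$ for all $i\in J$) and compute their weights.

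First I would characterize the highest weight condition. By the Remark following Proposition \ref{prop:signature for type D}, the operator $\te_i$ on $\B$ is governed by the reduced signature $\sigma_i({\bf c})^{\rm red}$, and $\te_i{\bf c}={\bf 0}$ precisely when there is no $-$ surviving in $\sigma_i({\bf c})^{\rm red}$; that is, every $-$ is cancelled by a subsequent $+$. For ${\bf c}\in \B^J$ only the entries $c_{\ov{b}\,\ov{a}}$ (with $\beta=\ep_a+\ep_b$) are nonzero, so only $\sigma_{i,1}({\bf c})$ and $\sigma_{i,2}({\bf c})$ contribute, while $\sigma_{i,3}({\bf c})$ is empty. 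I would translate the no-uncancelled-$-$ condition, block by block in the signature \eqref{eq:sigma}, into a system of inequalities among the multiplicities $c_{\ov b\,\ov a}$ arranged on $\Delta_n$ as in Example \ref{ex:Delta_n}.

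The key step is to solve this system and show it forces ${\bf c}$ to be supported only on the bottom row of $\Delta_n$, i.e.\ on the roots $\ep_i+\ep_{i+1}$, with the bottom-row entries weakly decreasing from right to left. Concretely, running the cancellation condition through $\sigma_{i,1}$ and $\sigma_{i,2}$ for every $i\in J$ should show that any nonzero entry not on the bottom row would produce an uncancelled $-$ for a suitable $i$; and on the bottom row the surviving constraints read off as monotonicity. Writing the bottom-row entries as $c_{\ov{k+1}\,\ov{k}}=m_k$ for $1\le k\le n-1$, the weight of such a highest weight element is $\mathrm{wt}({\bf c})=-\sum_k m_k(\ep_k+\ep_{k+1})$, which in coordinates $\la=\sum_i\la_i\ep_i$ gives $\la_1=-m_1$, $\la_i=-(m_{i-1}+m_i)$ for $2\le i\le n-1$, $\la_n=-m_{n-1}$. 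I would then verify that the dominance and pairing condition $0\ge\la_1=\la_2\ge\la_3=\la_4\ge\cdots$ is exactly equivalent to $m_k\ge 0$ being unconstrained in parity but telescoping correctly; this matches the stated indexing set once one checks the equalities $\la_{2j-1}=\la_{2j}$ come from the triangular support pattern.

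The main obstacle will be the bookkeeping in the second step: carefully proving that the highest weight condition eliminates all entries above the bottom row. The signature blocks in \eqref{eq:sigma} interleave many multiplicities, so I expect to argue by downward induction on the rows of $\Delta_n$, using that the top entry in each $-/+$ pair of $\sigma_{i,1}$ forces an entry in a higher row to vanish whenever a lower one is nonzero. Once the support is pinned to the bottom row, matching the resulting weights to the constraint $0\ge\la_1=\la_2\ge\la_3=\la_4\ge\cdots$ is a routine translation, and regularity of $\B^J$ then yields the claimed multiplicity-free decomposition $\B^J\cong\bigsqcup_\la B_J(\la)$.
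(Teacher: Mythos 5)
Your overall strategy coincides with the paper's: use regularity of $\B^J$ as an $\mf l$-crystal (from \eqref{eq:B^J_s-2}) and reduce the proposition to characterizing the $\mf l$-highest weight elements via the signature rule of Proposition \ref{prop:signature for type D}. However, the central combinatorial claim in your second step is incorrect. The $\mf l$-highest weight elements of $\B^J$ are \emph{not} the arrays supported on the whole bottom row of $\Delta_n$; they are supported only on the alternating bottom-row positions $\ep_{n-1}+\ep_n,\ \ep_{n-3}+\ep_{n-2},\ldots$, subject to $c_{\ov{n}\,\ov{n-1}}\ge c_{\ov{n-2}\,\ov{n-3}}\ge\cdots$. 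For example, for $n=4$ the element with $c_{\ov{3}\,\ov{2}}=1$ and all other entries zero is supported on the bottom row, yet $\sigma_3(\bc)=(-)$ by \eqref{eq:sigma} (the entry $c_{\ov{i+1}\,\ov{i}}$ contributes uncancelled $-$'s to the first block of $\sigma_{i+1,2}$, and for an element supported on the bottom row the only $+$'s that could cancel them come from $c_{\ov{i+3}\,\ov{i+2}}$ in $\sigma_{i+1,1}$), so $\te_3\bc\ne{\bf 0}$. This is what forces every other bottom-row entry to vanish. Your monotonicity is also reversed: the surviving inequality is $c_{\ov{i+2}\,\ov{i+1}}\ge c_{\ov{i}\,\ov{i-1}}$, i.e.\ weakly decreasing from \emph{left} to right along the alternating positions.

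This is not mere bookkeeping: with your proposed support the weights come out as $\la_1=-m_1$, $\la_i=-(m_{i-1}+m_i)$, which satisfy $\la_1=\la_2$, $\la_3=\la_4,\ldots$ only when every other $m_k$ vanishes — exactly the constraint you omitted. The pairing $\la_{2j-1}=\la_{2j}$ does not come for free "from the triangular support pattern''; it is equivalent to the alternating-support condition, and extracting that condition from the blocks $\sigma_{i,1},\sigma_{i,2}$ is the actual content of the proof. The remaining ingredients of your plan (eliminating all entries strictly above the bottom row — which the paper does by choosing a nonzero entry minimal in the convex order \eqref{eq:convex order i_0} and exhibiting $i$ with $\te_i\bc\ne{\bf 0}$ — and invoking regularity to conclude) are sound, but as written the proposal would produce the wrong set of highest weight vectors and hence the wrong decomposition.
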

\pf It is enough to characterize the highest weight elements in $\B^J$ as an $\mf l$-crystal. We claim that $\textbf{c} = (c_{\ov{j}\ov{i}}) \in \B^J$ is an $\mf l$-highest weight element if and only if 
\begin{equation}\label{eq:characterization of l h.w.}
c_{\ov{n}\ov{n-1}} \ge c_{\ov{n-2}\ov{n-3}} \ge \cdots, 
\quad 
c_{\ov{j}\ov{i}} = 0 \ \textrm{elsewhere}.
\end{equation}
It is immediate from Proposition \ref{prop:signature for type D} that if $\bc$ satisfies \eqref{eq:characterization of l h.w.}, then $\te_i\bf\bc={\bf 0}$ for $i\in I\setminus\{n\}$. 
Conversely, suppose that $\textbf{c} = (c_{\ov{j}\ov{i}}) \in \B^J$ is an $\mf l$-highest weight element. If $c_{\ov{j}\ov{i}} \neq 0$ for some 
$(i,j)\not\in \{\,(n-1,n), (n-3,n-2),\ldots\,\}$,
then choose $c_{\ov{j}\ov{i}} \neq 0$ whose corresponding root $\ep_i+\ep_j$ is minimal with respect to \eqref{eq:convex order i_0}. If $j-i>1$, then $\te_{i}\bc\neq {\bf 0}$, and if $j-i=1$, then $\te_{j}\bc\neq {\bf 0}$. This is a contradiction. Next, if $c_{\ov{i+2} \ \ov{i+1}}<c_{\ov{i} \ov{i-1}}$ for some $i\geq 2$, then we have $\te_i\bc\neq {\bf 0}$, which is also a contradiction. Hence $\bc$ satisfies \eqref{eq:characterization of l h.w.}.
\qed

\subsection{Combinatorial description of $\varepsilon_n^*$} \label{section:bipath}
Let us give an explicit combinatorial description of $\varepsilon_n^*$ on $\B^J$.

\begin{df}{\rm

 A {\em path in $\Delta_n$} is a sequence $p=(\gamma_1,\ldots,\gamma_s)$ in $\Phi^+(J)$ for some $s\geq 1$ such that
\begin{itemize} 
\item[(1)] $\gamma_1,\ldots,\gamma_s\in \Phi^+(J)$,
\item[(2)] if $\gamma_{i} = \ep_{k}+\ep_{l+1}$ for some $k<l$, then $\gamma_{i+1}=\ep_{k+1}+\ep_{l+1}$ or $\ep_k+\ep_{l}$ (see \eqref{eq:small triangle}), 
\item[(3)] $\gamma_s = \ep_k+\ep_{k+1}$ for some $k$.
\end{itemize}
For $\beta\in \Phi^+(J)$, a {\em double path at $\beta$ in $\Delta_n$} is a pair of paths ${\bf p}=(p_1,p_2)$ in $\Delta_n$ of the same length with $p_1=(\gamma_1,\ldots,\gamma_s)$ and $p_2=(\delta_1,\ldots,\delta_s)$ such that 
\begin{itemize} 
\item[(1)] $\gamma_1=\delta_1=\beta$,   
\item[(2)] $\gamma_i$ is located to the strictly left of $\delta_i$ for $2\leq i\leq s$, 
\item[(3)] $\gamma_s=\ep_{k+1}+\ep_{k+2}$, $\delta_s=\ep_k+\ep_{k+1}$ for some $k\geq 1$.
\end{itemize}}
\end{df}

\begin{ex}\label{ex:double path}
{\rm For a double path ${\bf p}=(p_1,p_2)$ at $\beta$, if we draw an arrow from $\gamma_i$ to $\gamma_{i+1}$ in $p_1$ and from $\delta_i$ to $\delta_{i+1}$ in $p_2$, then $p_1$ and $p_2$ form a pair of non-intersecting paths starting from $\beta$ going downward to the bottom row in $\Delta_n$ with $p_1$ on the left, and $p_2$ on the right. The following is the list of double paths ${\bf p}$ at $\ep_1+\ep_5$ in $\Delta_5$.  
{\allowdisplaybreaks\begin{align*} 
&\xymatrixcolsep{0.3pc}\xymatrixrowsep{0.4pc}
\xymatrix{
 &  & &  {\bullet}\ar@{->}[ld]\ar@{->}[rd] & & &  & \\
  & &  {\bullet}\ar@{->}[ld]  & &  {\bullet}\ar@{->}[ld] & & \\
   &  {\bullet}\ar@{->}[ld] & &  {\bullet}\ar@{->}[ld] &  &  {\bullet}  & \\  
     {\bullet} & &  {\bullet} & &  {\bullet} & &  {\bullet}
     } \ \ \ \
\xymatrix{
 &  & &  {\bullet}\ar@{->}[ld]\ar@{->}[rd] & & &  & \\
  & &  {\bullet}\ar@{->}[ld] & &  {\bullet}\ar@{->}[ld]  & & \\
   &  {\bullet}\ar@{->}[rd] & &  {\bullet}\ar@{->}[rd] &  &  {\bullet} & \\  
     {\bullet} & &  {\bullet} & &  {\bullet} & &  {\bullet}
     }\ \ \ \
\xymatrix{
 &  & &  {\bullet}\ar@{->}[ld]\ar@{->}[rd] & & &  & \\
  & &  {\bullet}\ar@{->}[ld] & &  {\bullet}\ar@{->}[rd] & & \\
   &  {\bullet}\ar@{->}[rd] & &  {\bullet}  &  &  {\bullet}\ar@{->}[ld] & \\  
     {\bullet} & &  {\bullet} & &  {\bullet} & &  {\bullet}
     } \\ \mbox{} \\
&\hskip 2cm\quad \xymatrixcolsep{0.3pc}\xymatrixrowsep{0.4pc}
\xymatrix{
 &  & &  {\bullet}\ar@{->}[ld]\ar@{->}[rd] & & &  & \\
  & &  {\bullet}\ar@{->}[rd] & &  {\bullet}\ar@{->}[rd] & & \\
   &  {\bullet} & &  {\bullet}\ar@{->}[ld] &  &  {\bullet}\ar@{->}[ld] & \\  
     {\bullet} & &  {\bullet} & &  {\bullet} & &  {\bullet}
     } \ \ \ \
\xymatrix{
 &  & &  {\bullet}\ar@{->}[ld]\ar@{->}[rd] & & &  & \\
  & &  {\bullet}\ar@{->}[rd] & &  {\bullet}\ar@{->}[rd] & & \\
   &  {\bullet} & &  {\bullet}\ar@{->}[rd] &  &  {\bullet}\ar@{->}[rd] & \\  
     {\bullet} & &  {\bullet} & &  {\bullet} & &  {\bullet}
     } 
\end{align*}}
}
\end{ex}

For ${\bf c}\in \B^J$ and a double path ${\bf p}$, let 
\begin{equation}
||{\bf c}||_{\bf p} = \sum_{\text{$\beta$ lying on ${\bf p}$}}c_\beta.
\end{equation}

\begin{thm}\label{thm:epsilon^*_n}
For ${\bf c}\in \B^J$,
\begin{equation*}
\varepsilon_n^*({\bf c}) = \max\{\,||{\bf c}||_{\bf p}\,|\,\text{$\bf p$ is a double path in $\Delta_n$}\,\}.
\end{equation*}
\end{thm}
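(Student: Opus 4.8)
The plan is to turn $\varepsilon_n^*$ into the reading of a single Lusztig coordinate in a well-chosen reduced word, and then to evaluate that coordinate back in $\bi_0$-coordinates. Concretely, by the $*$-case of \eqref{eq:f}, for any $\bi'\in R(w_0)$ whose associated convex order has $\alpha_n=\ep_{n-1}+\ep_n$ as its \emph{largest} root (so $\beta'_N=\alpha_n$), the operator $\tf_n^*$ increments and $\te_n^*$ decrements the last coordinate; hence $\varepsilon_n^*(\bc)=c'_{\alpha_n}$, the final entry of the $\bi'$-Lusztig datum of $\bc$. Such $\bi'$ exists and is reachable from $\bi_0$ by braid moves, its last letter being $n$ for $n$ even and $n-1$ for $n$ odd (this is the $-w_0$ twist $\tau$, which also foreshadows the parity split). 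Thus the computation reduces to (i) exhibiting $\bi'$ and (ii) expressing $c'_{\alpha_n}$ through the transition map $R_{\bi_0}^{\bi'}$ as a function of the given $\bc\in\B^J$.

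The reason this is genuinely harder than the nodes treated in Proposition \ref{prop:signature for type D} — and the step I expect to be the main obstacle — is that $\bi_0$ cannot be made simply braided for $n$: transporting $\alpha_n$ past the trivalent node forces rank-$3$ braid relations ($D_3\cong A_3$), whose tropical transition is not a single translation of coordinates but involves a genuine maximum. Consequently Theorem \ref{thm:signature rule} does not apply at $n$, and the maximum over several competing linear expressions — ultimately the maximum over double paths — is forced by the trivalent node. To carry out step (ii) I would invoke the Berenstein--Zelevinsky description \cite{BZ-1} of the entries of $R_{\bi_0}^{\bi'}$: each is a tropical polynomial, a maximum of linear functionals $\sum_{\beta}d_\beta c_\beta$ with $d_\beta\in\{0,1\}$ indexed by $n$-trails in the (half-)spin representation $V(\varpi_n)$ (with the parity-dependent twist $\varpi_n\mapsto\varpi_{n-1}$ for $n$ odd). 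Specializing to $\bc\in\B^J$ annihilates every coordinate on $\Phi^+_J$, so only trails supported on $\Phi^+(J)=\Delta_n$ survive.

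The crux is then a weight-by-weight analysis of the spin crystal showing that each surviving $n$-trail records a pair of descending lattice paths through $\Delta_n$ ending at two adjacent bottom vertices $\ep_{k+1}+\ep_{k+2}$ and $\ep_k+\ep_{k+1}$, and that admissibility forces them to be non-intersecting with one strictly to the left of the other — that is, a double path ${\bf p}=(p_1,p_2)$ in the sense of the definition preceding the theorem. Under this identification the trail functional equals $\sum_{\beta\text{ on }{\bf p}}c_\beta=||\bc||_{\bf p}$, so running the BZ maximum over all surviving trails yields $\varepsilon_n^*(\bc)=\max_{\bf p}||\bc||_{\bf p}$. I would finish by treating $n$ even and $n$ odd separately (the two cases already visible in the definition of $\bi_0$), verifying that the set of admissible trails, and hence of double paths, is the same in both parities, with the $A_2$ tropical moves handled routinely and the trivalent-node bijection between admissible $n$-trails and double paths (together with the matching of functionals) carrying the essential content.
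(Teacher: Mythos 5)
Your proposal follows essentially the same route as the paper: the paper likewise converts $\varepsilon_n^*({\bf c})$ into a single coordinate of a reparametrization (the first string coordinate in direction $\bi_0$, Lemma \ref{lem:max}, which equals your $c'_{\alpha_n}$), evaluates it by the Berenstein--Zelevinsky trail formula in the spin representation $V(\varpi_n)$, and then matches the relevant trails with double paths in $\Delta_n$ (Lemmas \ref{lem:bijection from T to D} and \ref{lem:bijection from P to D}). The one substantive step your sketch glosses over is that ``specializing to ${\bf c}\in\B^J$ kills the $\Phi^+_J$-coordinates'' does not by itself restrict the trails: every trail from $s_n\varpi_n$ still contributes a functional on the $\Phi^+(J)$-coordinates whose coefficients depend on its behavior through the Levi part, and one must prove that the maximum is attained on trails whose Levi part is the canonical one (the paper's Lemma \ref{lem:T_trail}) before the bijection with double paths applies.
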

\pf We give the proof of this formula in Section \ref{pf:epsilon}.
\qed

\begin{rem}{\rm
Let $\theta=\ep_1+\ep_n$ be the longest root in $\Phi^+$. Since $\theta$ is located at the top of $\Delta_n$, the formula in Theorem \ref{thm:epsilon^*_n} is equivalent to 
\begin{equation*}
\varepsilon_n^*({\bf c}) = \max\{\,||{\bf c}||_{\bf p}\,|\,\text{$\bf p$ is a double path at $\theta$ in $\Delta_n$}\,\}.
\end{equation*}

}
\end{rem}

\section{Affine crystal structure and Burge correspondence}\label{sec:main}

\subsection{KR crystals $B^{n,s}$} 
Let $\hat{\mf g}$ be an affine Kac-Moody algebra of type $D_n^{(1)}$ with
$\hat{I}=\{\,0,1,\ldots,n\,\}$ the index set for the simple roots.

\vskip 3mm

\begin{center} 
\setlength{\unitlength}{0.19in}
\begin{picture}(16,4.5)
\put(2.8,0.7){\makebox(0,0)[c]{$\bigcirc$}}
\put(2.8,3.3){\makebox(0,0)[c]{$\bigcirc$}}
\put(5.6,2){\makebox(0,0)[c]{$\bigcirc$}}
\put(10.4,2){\makebox(0,0)[c]{$\bigcirc$}}
\put(13.1,3.3){\makebox(0,0)[c]{$\bigcirc$}}
\put(13.1,0.7){\makebox(0,0)[c]{$\bigcirc$}}
\put(5.3,1.8){\line(-2,-1){2.1}}
\put(5.3,2.2){\line(-2,1){2.1}}
\put(6,2){\line(1,0){1.3}} 
\put(8.7,2){\line(1,0){1.3}} 
\put(10.7,2.2){\line(2,1){2}}
\put(10.7,1.8){\line(2,-1){2}}

\put(8,1.95){\makebox(0,0)[c]{$\cdots$}}
\put(2.8,2.5){\makebox(0,0)[c]{\tiny ${\alpha}_0$}}
\put(2.8,0.0){\makebox(0,0)[c]{\tiny ${\alpha}_1$}}
\put(5.6,1){\makebox(0,0)[c]{\tiny ${\alpha}_2$}}
\put(10.4,1){\makebox(0,0)[c]{\tiny ${\alpha}_{n-2}$}}
\put(13.1,2.5){\makebox(0,0)[c]{\tiny ${\alpha}_{n-1}$}}
\put(13.1,0.0){\makebox(0,0)[c]{\tiny ${\alpha}_{n}$}}

\end{picture}
\end{center} \vskip 3mm
For $r\in \{0,n\}$, let $\hat{\mf g}_r$ be the subalgebra of $\hat{\mf g}$ corresponding to $\{\,\alpha_i\,|\,i\in \hat{I} \setminus\{r\}\,\}$. Then $\hat{\mf g}_0=\mf g$, and $\hat{\mf g}_0\cap \hat{\mf g}_n=\mf l$. 
Let $\hat{P}=\bigoplus_{i\in \hat{I}}\Z\La_i\oplus\Z\delta$ be the weight lattice of $\hat{\g}$, where $\delta$ is the positive imaginary null root and $\La_i$ is the $i$-th fundamental weight (see \cite{K}). We regard $P=\bigoplus_{i=1}^n\Z\epsilon_i$ as a sublattice of $\hat{P}/\Z\delta$ by putting 
$\epsilon_1=\Lambda_1-\Lambda_0$, $\epsilon_2=\Lambda_2-\Lambda_1-\Lambda_0$, $\epsilon_k=\Lambda_k-\Lambda_{k-1}$ for $k=3,\ldots, n-2$, $\epsilon_{n-1}=\Lambda_{n-1}+\Lambda_n-\Lambda_{n-2}$ and $\epsilon_{n}=\Lambda_{n}-\Lambda_{n-1}$.
In particular, we have $\alpha_0 = -\ep_1-\ep_2$ in $P$. 
If $\varpi'_i$ are the fundamental weights for $\hat{\mf g}_{n}$ for $i\in \hat{I}\setminus\{n\}$, then $\varpi'_i=\varpi_i$ for $i\in \hat{I}\setminus\{0,n\}$ and $\varpi'_0=-\varpi_n$.

For ${\bf c}\in \B^J$, define
\begin{equation}\label{eq:e_0 and f_0 on B^J}
\begin{split}
&\te_0{\bf c} = {\bf c} + {\bf 1}_{\ep_1+\ep_2},\quad
\tf_0{\bf c} = 
\begin{cases}
{\bf c} - {\bf 1}_{\ep_1+\ep_2},& \text{if $c_{\ep_1+\ep_2}>0$},\\
{\bf 0}, & \text{otherwise},
\end{cases}\\
&\varphi_0({\bf c})=\max\{\,k \,|\, \tf_0^k {\bf c}\neq {\bf 0}\,\}, \ \ 
\varepsilon_0({\bf c})=\varphi_0({\bf c})- \langle {\rm wt}({\bf c}), h_0 \rangle.
\end{split}
\end{equation}

\begin{lem}\label{lem:affine crystal B^J}
The set $\B^J$ is a $\hat{\mf g}$-crystal with respect to ${\rm wt}$, $\varepsilon_i$, $\varphi_i$, $\te_i$, $\tf_i$ for $i\in \hat{I}$, where ${\rm wt}$ is the restriction of ${\rm wt} : \B \longrightarrow P$ to $\B^J$.
\end{lem}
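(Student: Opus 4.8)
The plan is to verify directly that the data $({\rm wt}, \varepsilon_i, \varphi_i, \te_i, \tf_i)_{i \in \hat I}$ satisfy the crystal axioms. Since $\B^J$ is already known to be a $\mf g$-crystal (it is a subcrystal of $\B \cong B(\infty)$, as established in Section \ref{section:Delta_n}), the axioms for $i \in I = \hat I \setminus \{0\}$ hold automatically. Thus the entire content of the lemma concerns the index $i = 0$, and I only need to check the axioms involving $\te_0, \tf_0, \varepsilon_0, \varphi_0$, together with their compatibility with ${\rm wt}$. First I would record that, because $\alpha_0 = -\ep_1 - \ep_2$ in $P$ (as noted in the excerpt), the operator $\te_0 = {\bf c} + {\bf 1}_{\ep_1+\ep_2}$ raises weight by $-\alpha_0$ and $\tf_0$ lowers it by $\alpha_0$, which is exactly the weight compatibility ${\rm wt}(\te_0 {\bf c}) = {\rm wt}({\bf c}) + \alpha_0$ and ${\rm wt}(\tf_0 {\bf c}) = {\rm wt}({\bf c}) - \alpha_0$ when these are nonzero.

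The key structural observation I would exploit is that $\te_0$ and $\tf_0$ act \emph{solely} on the single coordinate $c_{\ep_1+\ep_2}$, by $+1$ and $-1$ respectively, with $\tf_0$ returning ${\bf 0}$ when that coordinate is already $0$. This means that along any $0$-string the element ${\bf c}$ moves through the values $\ldots, c_{\ep_1+\ep_2} = 0, 1, 2, \ldots$ with all other coordinates frozen. Consequently $\varphi_0({\bf c}) = \max\{k \mid \tf_0^k {\bf c} \neq {\bf 0}\}$ is simply $c_{\ep_1+\ep_2}$, which is finite and nonnegative. From the definition $\varepsilon_0({\bf c}) = \varphi_0({\bf c}) - \langle {\rm wt}({\bf c}), h_0 \rangle$ one gets the required relation $\varphi_0({\bf c}) - \varepsilon_0({\bf c}) = \langle {\rm wt}({\bf c}), h_0 \rangle$ by construction. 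The remaining axioms are then the string conditions: $\varepsilon_0(\te_0 {\bf c}) = \varepsilon_0({\bf c}) - 1$, $\varphi_0(\te_0 {\bf c}) = \varphi_0({\bf c}) + 1$ when $\te_0 {\bf c} \neq {\bf 0}$ (and the dual statements for $\tf_0$), plus the mutual inversion $\te_0 \tf_0 {\bf c} = {\bf c}$ whenever $\tf_0 {\bf c} \neq {\bf 0}$ and $\tf_0 \te_0 {\bf c} = {\bf c}$ always. Each of these follows immediately from the fact that $\te_0, \tf_0$ shift $c_{\ep_1+\ep_2}$ by $\pm 1$: since $\varphi_0({\bf c}) = c_{\ep_1+\ep_2}$, applying $\te_0$ increases this by one, applying $\tf_0$ decreases it by one, and $\langle {\rm wt}(\cdot), h_0 \rangle$ shifts correspondingly.

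The one point that genuinely requires justification, and which I expect to be the main obstacle, is that $\varepsilon_0$ takes values in $\Z$ (never $-\infty$) and is consistent with the $\te_0$-action in the sense that $\varepsilon_0({\bf c}) \geq 0$ with $\te_0^{\varepsilon_0({\bf c})} {\bf c}$ well-defined. This amounts to showing $\varphi_0({\bf c}) \geq \langle {\rm wt}({\bf c}), h_0 \rangle$ cannot fail in a way that breaks the axioms; more precisely, I must confirm that the quantity $\langle {\rm wt}({\bf c}), h_0 \rangle = (\,{\rm wt}({\bf c}), \alpha_0^\vee)$, computed via ${\rm wt}({\bf c}) = -\sum_{\beta} c_\beta \beta$ restricted to the $\Phi^+(J)$-support and paired against $\alpha_0 = -\ep_1-\ep_2$, is an integer of the correct sign relative to $c_{\ep_1+\ep_2}$. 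I would compute $\langle {\rm wt}({\bf c}), h_0 \rangle$ explicitly as a linear combination of the $c_\beta$ and check that $\varepsilon_0({\bf c}) = c_{\ep_1+\ep_2} - \langle {\rm wt}({\bf c}), h_0 \rangle$ is always a finite integer.

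Finally I would package these verifications together: since $\te_0, \tf_0$ act as a translation on a single coordinate with a floor at $0$, the resulting $0$-strings are honest crystal strings of length $\varepsilon_0({\bf c}) + \varphi_0({\bf c}) + 1$, and all the tensor-free crystal axioms for the index $0$ are satisfied. Combined with the already-established $\mf g$-crystal structure for $i \in I$, this shows $\B^J$ is a $\hat{\mf g}$-crystal, completing the proof of Lemma \ref{lem:affine crystal B^J}.
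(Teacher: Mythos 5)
Your overall strategy---direct verification of the crystal axioms for the single new index $0$, the $\mf g$-crystal structure for $i\in I$ being already in place---is exactly what the paper intends; its entire proof is the one-line remark that the lemma follows directly from the definitions in \eqref{eq:e_0 and f_0 on B^J}. The parts of your argument that address the actual axioms all go through: since $\te_0$ and $\tf_0$ translate the single coordinate $c_{\ep_1+\ep_2}=c_{\ov{2}\,\ov{1}}$ by $\pm 1$, one gets ${\rm wt}(\te_0{\bf c})={\rm wt}({\bf c})+\alpha_0$ (using $\alpha_0=-\ep_1-\ep_2$), $\varphi_0({\bf c})=c_{\ov{2}\,\ov{1}}\in\Z_+$, the relation $\varphi_0=\varepsilon_0+\langle{\rm wt},h_0\rangle$ by definition, the shifts $\varepsilon_0(\te_0{\bf c})=\varepsilon_0({\bf c})-1$ and $\varphi_0(\te_0{\bf c})=\varphi_0({\bf c})+1$, and the mutual inversion of $\te_0$ and $\tf_0$.

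However, the step you single out as ``the main obstacle'' rests on a misconception and would fail if carried out literally. You propose to show $\varepsilon_0({\bf c})\geq 0$ and you assert that the $0$-strings are honest finite strings of length $\varepsilon_0({\bf c})+\varphi_0({\bf c})+1$. Neither holds: a direct computation gives $\langle{\rm wt}({\bf c}),h_0\rangle=2c_{\ov{2}\,\ov{1}}+\sum_{j\geq 3}\bigl(c_{\ov{j}\,\ov{1}}+c_{\ov{j}\,\ov{2}}\bigr)$, hence
\begin{equation*}
\varepsilon_0({\bf c})=-c_{\ov{2}\,\ov{1}}-\sum_{j\geq 3}\bigl(c_{\ov{j}\,\ov{1}}+c_{\ov{j}\,\ov{2}}\bigr)\leq 0,
\end{equation*}
and $\te_0$ never returns ${\bf 0}$, so every $0$-string is infinite in the $\te_0$-direction. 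What saves the proof is that seminormality in direction $0$ (i.e.\ $\varepsilon_0(b)=\max\{k\mid\te_0^k b\neq{\bf 0}\}$ and $\varepsilon_0\geq 0$) is \emph{not} among the axioms of an abstract crystal; the axioms only require $\varepsilon_0$ to take values in $\Z\cup\{-\infty\}$, to satisfy $\varphi_0=\varepsilon_0+\langle{\rm wt},h_0\rangle$, and to decrease by $1$ under $\te_0$, all of which you have. So your proof is correct once the spurious requirement is dropped; note that $\B^J$ itself is not a regular (seminormal) $\hat{\mf g}$-crystal in the direction $0$---regularity is recovered only for the finite pieces $\B^{J,s}\otimes T_{s\varpi_n}$, which is the separate and genuinely harder content of Theorem \ref{thm:main-1}.
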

\pf It follows directly from \eqref{eq:e_0 and f_0 on B^J}.
\qed

\begin{rem}{\rm
The inclusions  
	$\B^{J,s}\hookrightarrow\B^{J,t}\hookrightarrow\B^J$
	for $s \le t$ are embeddings of $\widehat{\g}$-crystals (cf. \eqref{eq:B^J_s-2}), and hence $\B^J$ is a direct limit of $\left\{ \B^{J,s} \,|\, s \in \mathbb{Z}_+ \right\}$.
	}
\end{rem}

\begin{thm}\label{thm:main-1}
For $s\geq 1$, $\B^{J,s}\otimes T_{s\varpi_n}$ is a regular $\hat{\mf g}$-crystal and $$\B^{J,s}\otimes T_{s\varpi_n} \cong B^{n,s},$$ 
where $B^{n,s}$ is the Kirillov-Reshetikhin crystal of type $D_n^{(1)}$ associated to $s\varpi_n$.
\end{thm}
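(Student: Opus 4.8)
The plan is to prove the two assertions separately: that $\B^{J,s}\otimes T_{s\varpi_n}$ is a regular $\hat{\mf g}$-crystal, and that it is isomorphic to $B^{n,s}$. The work for the classical subalgebra $\hat{\mf g}_0=\mf g$ is already in hand: by \eqref{eq:B^J_s-2} we have $\B^{J,s}\otimes T_{s\varpi_n}\cong B(s\varpi_n)$ as a $\mf g$-crystal, and this is precisely the classical decomposition of $B^{n,s}$, which is irreducible of highest weight $s\varpi_n$ (cf. \cite{FOS}). Hence all the new content is carried by the node $0$ of \eqref{eq:e_0 and f_0 on B^J}, and the whole problem is to understand how these $0$-arrows fit together with the structure for $i\in I$. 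The key observation is that on $\B^J$ the operators $\te_n,\tf_n$ inherited from $\B$ act by removing and adding a box at the corner $\ep_{n-1}+\ep_n$ of $\Delta_n$ (recall $\beta_1=\alpha_n$ and \eqref{eq:f}), while $\te_0,\tf_0$ act at the opposite corner $\ep_1+\ep_2$; the two corners are exchanged by the left--right reflection of $\Delta_n$.

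First I would make this reflection precise. Let $\sigma$ be the order-two automorphism of the Dynkin diagram of $D_n^{(1)}$ given by $i\leftrightarrow n-i$ for $1\le i\le n-1$ and $0\leftrightarrow n$, and let $R$ be the involution of $\B^J$ induced by $\ep_k\mapsto\ep_{n+1-k}$, which permutes $\Phi^+(J)$, swaps the two bottom corners $\ep_{n-1}+\ep_n\leftrightarrow\ep_1+\ep_2$ of $\Delta_n$, and preserves the cutoff $\varepsilon_n^*\le s$ defining $\B^{J,s}$ by Theorem \ref{thm:epsilon^*_n} (reflection carries double paths to double paths with the same $||{\bf c}||_{\bf p}$). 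The central claim is that $R$ is a $\sigma$-twisted automorphism of the $\hat{\mf g}$-crystal $\B^J$, i.e. $R\circ\te_i=\te_{\sigma(i)}\circ R$ and $R\circ\tf_i=\tf_{\sigma(i)}\circ R$ for all $i\in\hat I$. For $i=n$ this reads $R\,\te_n\,R^{-1}=\te_0$, which is immediate from \eqref{eq:e_0 and f_0 on B^J} and the fact that $R$ interchanges the two corners; for $i\in\{1,\dots,n-1\}$ it reads $R\,\te_i\,R^{-1}=\te_{n-i}$, and this is checked against the explicit signature rule of Proposition \ref{prop:signature for type D}.

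Granting that $R$ is $\sigma$-twisted, regularity follows quickly. Any pair $\{i,j\}\subset\hat I$ with $0\notin\{i,j\}$ lies inside $\hat{\mf g}_0=\mf g$, so the rank-two strings are integrable by \eqref{eq:B^J_s-2}; the pair $\{0,n\}$ is disconnected in the diagram and one checks directly from \eqref{eq:e_0 and f_0 on B^J} and the node-$n$ rule that $\te_0,\tf_0$ commute with $\te_n,\tf_n$ (they act at disjoint corners of $\Delta_n$); and every remaining pair avoids $n$, hence is the $\sigma$-image of a pair avoiding $0$ and is integrable because $R$ transports the $\hat{\mf g}_0$-structure to the $\hat{\mf g}_n$-structure. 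In particular, as an $\hat{\mf g}_n$-crystal $\B^{J,s}\otimes T_{s\varpi_n}$ is carried by $R$ to $B(s\varpi_n)$, hence is again irreducible of the spin highest weight corresponding to $\varpi'_0=-\varpi_n$.

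Finally I would identify the crystal with $B^{n,s}$. Having realized $\B^{J,s}\otimes T_{s\varpi_n}$ as a regular $\hat{\mf g}$-crystal that is classically $B(s\varpi_n)$ and that carries the $\sigma$-twisted automorphism $R$ with $\te_0=R\,\te_n\,R^{-1}$, it coincides with the standard realization of the classically irreducible Kirillov--Reshetikhin crystal $B^{n,s}$, in which the $0$-arrows of $B(s\varpi_n)$ are defined as the $\sigma$-twist of the $n$-arrows; matching the unique $\mf g$-highest weight vector of weight $s\varpi_n$ on both sides then gives the isomorphism of affine crystals (cf. \cite{FOS}). I expect the main obstacle to be the verification that $R$ is genuinely $\sigma$-twisted on the $J$-arrows: since the convex order defining $\bi_0$ is not left--right symmetric, reflecting $\Delta_n$ does not obviously commute with the Kashiwara operators, and one must use the explicit form of $\sigma_i({\bf c})$ in Proposition \ref{prop:signature for type D} together with the double-path description of $\varepsilon_n^*$ in Theorem \ref{thm:epsilon^*_n} to show that $\te_i,\tf_i$ are carried exactly to $\te_{n-i},\tf_{n-i}$.
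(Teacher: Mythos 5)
Your strategy---realize the diagram symmetry $i\leftrightarrow n-i$, $0\leftrightarrow n$ on $\B^J$ by the left--right reflection of $\Delta_n$, transport the $\hat{\mf g}_0$-regularity across it, and then pin down the affine structure by classical irreducibility of $B^{n,s}$---is the same as the paper's, but your central lemma is stated with the wrong variance, and the case you call ``immediate'' is the one that refutes it. If $R$ is the coordinate permutation induced by $\ep_k\mapsto\ep_{n+1-k}$, then $R(\alpha_i)=-\alpha_{n-i}$ for $1\le i\le n-1$ and $R(\alpha_n)=\ep_1+\ep_2=-\alpha_0$, so ${\rm wt}(R\bc)=R({\rm wt}(\bc))$ and the operator $R\,\tf_i\,R^{-1}$ \emph{raises} weight by $\alpha_{\sigma(i)}$: it can only be $\te_{\sigma(i)}$, never $\tf_{\sigma(i)}$. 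Concretely, $\tf_n\bc=\bc+{\bf 1}_{\ep_{n-1}+\ep_n}$ by \eqref{eq:f} while $\te_0\bc=\bc+{\bf 1}_{\ep_1+\ep_2}$ by \eqref{eq:e_0 and f_0 on B^J}, so reflecting the two corners gives $R\,\tf_n\,R^{-1}=\te_0$ and $R\,\te_n\,R^{-1}=\tf_0$, not $R\,\te_n\,R^{-1}=\te_0$ as you assert. Thus $R$ is a $\sigma$-twisted \emph{duality} (contravariant, exchanging $\te_i$ with $\tf_{\sigma(i)}$), which is exactly how the paper phrases the key step: the $\hat{\mf g}_n$-crystal $\B^{J,s}\otimes T_{s\varpi_n}$ is the \emph{dual} of the $\hat{\mf g}_0$-crystal under $\alpha_i\leftrightarrow-\alpha_{n-i}$. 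The same sign reversal shows up in Proposition \ref{prop:signature for type D}: under $R$ the roots $\ep_i+\ep_j$ carrying the $-$ signs in $\sigma_i(\bc)$ go to roots containing $\ep_{n+1-i}$, which carry the $+$ signs in $\sigma_{n-i}(R\bc)$. The regularity conclusion survives the correction, since the dual of a regular crystal is regular, but as written your rank-two checks for pairs containing $0$ are being transported by a map that does not intertwine the operators the way you claim.

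The second gap is the final identification. You appeal to ``the standard realization of $B^{n,s}$ in which the $0$-arrows are the $\sigma$-twist of the $n$-arrows''; this is not a citable characterization, and after correcting $R$ to a duality it would have to be restated anyway. What actually closes the argument is a uniqueness statement: a regular $\hat{\mf g}$-crystal that is isomorphic to $B(s\varpi_n)$ as a $\hat{\mf g}_0$-crystal is unique up to isomorphism. This is \cite[Lemma 2.6]{ST}, which the paper combines with the classical irreducibility $B^{n,s}\cong B(s\varpi_n)$ from \cite{FOS}. Your ``match the unique highest weight vector'' step gestures at this but does not by itself yield an isomorphism of \emph{affine} crystals without that lemma. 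With the variance corrected and \cite[Lemma 2.6]{ST} invoked, your argument becomes the paper's proof.
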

\pf By \eqref{eq:B^J_s-2}, $\B^{J,s}\otimes T_{s\varpi_n}$ is a regular $\hat{\mf g}_0$-crystal. 
By Proposition \ref{prop:signature for type D}, we see that the $\hat{\mf g}_n$-crystal $\B^{J,s}\otimes T_{s\varpi_n}$ is isomorphic to the dual of the $\hat{\mf g}_0$-crystal 
$\B^{J,s}\otimes T_{s\varpi_n}$ assuming that 
$\hat{\mf g}_n \cong \hat{\mf g}_0$ under the correspondence 
$\alpha_i \leftrightarrow -\alpha_{n-i}$ for $0\leq i\leq n-1$. 
This implies that $\B^{J,s}\otimes T_{s\varpi_n}$ is a regular $\hat{\mf g}_n$-crystal, and hence a regular $\hat{\mf g}$-crystal.
It is known that $B^{n,s}$ is classically irreducible, that is, $B^{n,s} \cong B(s\varpi_n)$ as a $\hat{\mf g}_0$-crystal (see \cite{FOS}). Therefore, it follows from \cite[Lemma 2.6]{ST} that $\B^{J,s}\otimes T_{s\varpi_n} \cong B^{n,s}$.
\qed
  
\begin{rem}\label{rem:polytope and limit}{\rm
By Theorem \ref{thm:epsilon^*_n}, we have 
	\begin{equation*}
	\B^{J,s} = \underset{\text{${\bf p}$}}{\bigcap}\left\{ {\bf c} \in \B^J \,|\, ||{\bf c}||_{\bf p} \le s \right\},
	\end{equation*}
	where $\bf p$ runs over the double paths in $\Delta_n$. This gives a polytope realization of the KR crystal $B^{n,s}$.
	By \cite{FOS2}, $\{ B^{J,s} \}$ is a family of perfect KR crystals. It is conjectured that $\{ B^{n,s} \}$ has the limit in the sense of \cite{KNO}, that is, $\{ B^{n,s} \}$ is a coherent family.
}
\end{rem}

\subsection{Burge correspondence}\label{subsec:burge}
Let us recall some necessary notions following \cite{Ful}. 
Let $\mathscr{P}$ be the set of partitions $\lambda=(\lambda_i)_{i\geq 1}$, 
which are often identified with Young diagrams. 
Let $\lambda'=(\lambda'_i)_{i\geq 1}$ be the conjugate of $\lambda$, and 
let $\lambda^\pi$ be the skew Young diagram obtained by $180^{\circ}$-rotation of $\lambda$. Let $\ell(\lambda)$ denote the {length of $\lambda$}, and let $\cP_n=\{\,\la\,|\,\ell(\la)\leq n\,\}$.

Let $[\ov{n}]:=\{\,\ov{n}<\cdots< \ov{1}\,\}$ be a linearly ordered set. 
Let $\W$ be the set of finite words in $[\ov{n}]$.
For a skew Young diagram $\lambda/\mu$, let $SST_n(\lambda/\mu)$ or simply $SST(\la/\mu)$ denote the set of semistandard tableaux of shape $\lambda/\mu$ with entries in $[\ov{n}]$.
For $T\in SST(\lambda/\mu)$, let $w(T)$ be a word in $\W$ obtained by reading the entries of $T$ row by row from top to bottom, and from right to left in each row, and let ${\rm sh}(T)$ denote the shape of $T$.

Let $T^{\nw}$ be the unique semistandard tableau such that ${\rm sh}(T^{\nw})\in \cP$ and $w(T^{\nw})$ is Knuth equivalent to $w(T)$. We define $T^{\se}$ in a similar way such that ${\rm sh}(T^{\se})\in \cP^\pi$. Note that if ${\rm sh}(T^{\nw})=\nu$, then ${\rm sh}(T^{\se})=\nu^\pi$.

For $a\in [\ov{n}]$ and $U\in SST(\lambda)$ with $\la\in\cP_n$, let $a\rightarrow U$ be the tableau obtained by applying the Schensted's column insertion of $a$ into $U$. 
Similarly, for $V\in SST(\lambda^\pi)$ and $b\in [\ov{n}]$, let $V\leftarrow b$ be the tableau obtained by applying the Schensted's column insertion of $b$ into $V$ in a reverse way starting from the rightmost column. 
For $w=w_1\ldots w_r\in\W$, we define $P(w)^{\nw}=( w_r\rightarrow(\cdots(w_2\rightarrow w_1)))$. Note that $P(w)^{\se} = ( (w_r\leftarrow w_{r-1}) \leftarrow \cdots \leftarrow w_1)$.

The goal in the remaining of this section is to give an explicit isomorphism in Proposition \ref{prop:decomp of B^J}, and extend it as an isomorphism of $D_n^{(1)}$-crystals.

Let us first recall a variation of RSK correspondence for type $D$ \cite{B}. Set 
\begin{equation}
\begin{split}
{\mc T}^{\se}:= \bigsqcup_{\substack{\la\in \cP_n \\ \la' \text{:even}}} SST(\la^\pi),
\quad\quad
{\mc T}^{\nw}:= \bigsqcup_{\substack{\la\in \cP_n \\ \la' \text{:even}}} SST(\la),
\end{split}
\end{equation}
where we say that $\la'$ is even if each part of $\la'$ is even
Let $\Omega$ be the set of biwords $(\ba,\bb)\in
\W\times \W$ such that
\begin{itemize}
\item[(1)] $\ba=a_1\cdots a_r$ and $\bb=b_1\cdots b_r$ for some $r\geq 0$,

\item[(2)] $a_i < b_i$ for $1\leq i\leq r$,

\item[(3)] $(a_1,b_1)\leq \cdots \leq (a_r,b_r)$, 
\end{itemize}
where $(a,b)< (c,d)$ if and only if $(a<c)$ or ($a=c$ and $b>d$) for $(a,b)$, $(c,d)\in \W \times \W$. 
We denote by ${\bf c}(\ba,\bb)$ the unique element in $\B^J$ corresponding to $(\ba,\bb)$ such that $c_{ab}=\left|\{\,k\,|\,(a_k,b_k)=(a,b) \,\}\right|$.

For $(\ba,\bb)\in \Omega$ with $\ba=a_1\cdots a_r$ and $\bb=b_1\cdots b_r$, we define a sequence of tableaux $P_r, P_{r-1}, \ldots, P_1$ inductively as follows:
\begin{itemize}
\item[(1)] let $P_1$ be a vertical domino  
{\tiny 
${\def\lr#1{\multicolumn{1}{|@{\hspace{.6ex}}c@{\hspace{.6ex}}|}{\raisebox{.1ex}{$#1$}}}\raisebox{-2.5ex}
{$\begin{array}[b]{c}
\cline{1-1}
\lr{a_r}\\
\cline{1-1}
\lr{b_r}\\
\cline{1-1}
\end{array}$}
}$
},

\item[(2)] if $P_{k+1}$ is given for $1\leq k\leq r-1$, then define $P_{k}$ to be the tableau obtained by first applying the column insertion to get $P_{k+1}\leftarrow b_{k}$, and then adding $\boxed{a_{k}}$ at the conner of $P_{k+1}\leftarrow b_{k}$ located above the box ${\rm sh}(P_{k+1}\leftarrow b_{k})/{\rm sh}(P_{k+1})$.
\end{itemize}
We put $P^{\se}(\ba,\bb):=P_1$.
It is not difficult to see from the definition that $P^{\se}(\ba,\bb) \in SST(\la^\pi)$ for some $\la\in \cP$ such that $\la'$ is even.

For ${\bf c}\in \B^J$, let $P^{\se}({\bf c})=P^{\se}(\ba,\bb)$ where ${\bf c}={\bf c}(\ba,\bb)$.
Since the map $(\ba,\bb) \mapsto P^{\se}(\ba,\bb)$ is a bijection from $\Omega$ to $\mc T^{\se}$ \cite{B}, we have a bijection 
\begin{equation}\label{eq:kappa_se}
\xymatrixcolsep{3pc}\xymatrixrowsep{0pc}\xymatrix{
\kappa^{\se} : \B^J  \ \ar@{->}[r] & \ {\mc T}^{\se}  \\
\quad {\bf c} \ar@{|->}[r] & P^{\se}({\bf c}) }.
\end{equation}
Similarly, let $\Omega'$ be the set of biwords $(\ba,\bb)\in
\W \times \W$ satisfying the same conditions as in $\Omega$ except that $<$ is replaced by $<'$, where $(a,b)<' (c,d)$ if and only if $(b<d)$ or ($b=d$ and $a<c$) for $(a,b)$ and $(c,d)\in \W \times \W$. We define ${\bf c}'(\ba,\bb)$ in the same way as in ${\bf c}(\ba,\bb)$. Given $(\ba,\bb)\in \Omega'$ with $\ba=a_1\cdots a_r$ and $\bb=b_1\cdots b_r$, define a sequence of tableaux $P_1, P_2, \ldots, P_r$ inductively as follows:
\begin{itemize}
\item[(1)] let $P_1$ be a vertical domino  
{\tiny 
${\def\lr#1{\multicolumn{1}{|@{\hspace{.6ex}}c@{\hspace{.6ex}}|}{\raisebox{.1ex}{$#1$}}}\raisebox{-2.5ex}
{$\begin{array}[b]{c}
\cline{1-1}
\lr{a_1}\\
\cline{1-1}
\lr{b_1}\\
\cline{1-1}
\end{array}$}
}$
},

\item[(2)] if $P_{k-1}$ is given for $2\leq k\leq r$, then define $P_{k}$ to be the tableau obtained by first applying the column insertion to get $a_k\rightarrow P_{k-1}$, and then adding $\boxed{b_{k}}$ at the conner of $a_k\rightarrow P_{k-1}$ located below the box ${\rm sh}(a_k\rightarrow P_{k-1})/{\rm sh}(P_{k-1})$,
\end{itemize}
and put $P^{\nw}(\ba,\bb):=P_r$.
For ${\bf c}\in \B^J$, let $P^{\nw}({\bf c})=P^{\nw}(\ba,\bb)$ where ${\bf c}={\bf c}'(\ba,\bb)$. Then we also have a bijection 
\begin{equation}\label{eq:kappa_nw}
\xymatrixcolsep{3pc}\xymatrixrowsep{0pc}\xymatrix{
\kappa^{\nw} : \B^J  \ \ar@{->}[r] & \ {\mc T}^{\nw}  \\
\quad {\bf c} \ar@{|->}[r] & P^{\nw}({\bf c})}.
\end{equation}

\begin{ex}{\rm 
\ytableausetup {mathmode, boxsize=1.0em} 
Suppose that $n=5$.
Let ${\bf c} \in \B^J$ be given by
\begin{equation*} 
\xymatrixcolsep{-0.2pc}\xymatrixrowsep{0pc}\xymatrix{
& & & & 2 & & & & \\
& & & 1 & & 0 & & \\
& & 1 & & 2 & & 1 & \\  
& 2 & & 1 & & 0 & & 1}
\end{equation*}
(see Example \ref{ex:Delta_n}),
where ${\bf c} = {\bf c}({\bf a}, {\bf b})$ for $({\bf a}, {\bf b}) \in \Omega$ with
\begin{equation*}
\biggr(\begin{array}{c} \ba \\ \bb \end{array}\biggr)=
\left(\begin{array}{ccccccccccc}
\ov{5} & \ov{5} & \ov{5} & \ov{5} & \ov{5} & \ov{5} & \ov{4} & \ov{4} & \ov{4} & \ov{3} & \ov{2} \\
\ov{1} & \ov{1} & \ov{2} & \ov{3} & \ov{4} & \ov{4} & \ov{2} & \ov{2} & \ov{3} & \ov{1} & \ov{1}
\end{array} \right).
\end{equation*} 
The following is the sequence of tableaux $P_r, P_{r-1}, \ldots, P_1=:P^{\se}(\ba,\bb)$
given in the definition of $\kappa^{\se}$ \eqref{eq:kappa_se}:
\begin{equation*}
\begin{split}
&
\begin{ytableau}
\none \\
\none \\
\ov{\tl{2}} \\
\ov{\tl{1}}
\end{ytableau} \quad
\raisebox{-5ex}{\text{$\overset{\begin{ytableau}\none[\ov{\tl{3}}] \\ \none[\ov{\tl{1}}]\end{ytableau}}{\longrightarrow}$}} \quad
\begin{ytableau}
\none \\
\none \\
\ov{\tl{3}} & \ov{\tl{2}} \\
\ov{\tl{1}} & \ov{\tl{1}}
\end{ytableau} \quad 
\raisebox{-5ex}{\text{$\overset{\begin{ytableau}\none[\ov{\tl{4}}] \\ \none[\ov{\tl{3}}]\end{ytableau}}{\longrightarrow}$}}\quad
\begin{ytableau}
\none & \ov{\tl{4}} \\
\none & \ov{\tl{3}} \\
\ov{\tl{3}} & \ov{\tl{2}} \\
\ov{\tl{1}} & \ov{\tl{1}}
\end{ytableau} \quad 
\raisebox{-5ex}{\text{$\overset{\begin{ytableau}\none[\ov{\tl{4}}] \\ \none[\ov{\tl{2}}]\end{ytableau}}{\longrightarrow}$}} \quad
\begin{ytableau}
\none & \none & \ov{\tl{4}} \\
\none & \none & \ov{\tl{3}} \\
\ov{\tl{4}} &\ov{\tl{2}} & \ov{\tl{2}} \\
\ov{\tl{3}} & \ov{\tl{1}} & \ov{\tl{1}}
\end{ytableau}  \quad
\raisebox{-5ex}{\text{$\overset{\begin{ytableau}\none[\ov{\tl{4}}] \\ \none[\ov{\tl{2}}]\end{ytableau}}{\longrightarrow}$}} \quad
\begin{ytableau}
\none & \none & \none & \ov{\tl{4}} \\
\none & \none & \none & \ov{\tl{3}} \\
\ov{\tl{4}} & \ov{\tl{4}} &\ov{\tl{2}} & \ov{\tl{2}} \\
\ov{\tl{3}} & \ov{\tl{2}} & \ov{\tl{1}} & \ov{\tl{1}}
\end{ytableau} \\ \mbox{} \\
&
\quad \quad 
\raisebox{-5ex}{\text{$\overset{\begin{ytableau}\none[\ov{\tl{5}}] \\ \none[\ov{\tl{4}}]\end{ytableau}}{\longrightarrow}$}}\quad
\begin{ytableau}
\none & \none & \ov{\tl{5}} & \ov{\tl{4}} \\
\none & \none & \ov{\tl{4}} & \ov{\tl{3}} \\
\ov{\tl{4}} & \ov{\tl{4}} &\ov{\tl{2}} & \ov{\tl{2}} \\
\ov{\tl{3}} & \ov{\tl{2}} & \ov{\tl{1}} & \ov{\tl{1}}
\end{ytableau}
\quad  \raisebox{-5ex}{\text{$\overset{\begin{ytableau}\none[\ov{\tl{5}}] \\ \none[\ov{\tl{4}}]\end{ytableau}}{\longrightarrow}$}}\quad
\begin{ytableau}
\none & \none & \none & \ov{\tl{5}} & \ov{\tl{4}} \\
\none & \none & \none & \ov{\tl{4}} & \ov{\tl{3}} \\
\ov{\tl{5}} & \ov{\tl{4}} & \ov{\tl{4}} &\ov{\tl{2}} & \ov{\tl{2}} \\
\ov{\tl{4}} & \ov{\tl{3}} & \ov{\tl{2}} & \ov{\tl{1}} & \ov{\tl{1}}
\end{ytableau} \quad 
\raisebox{-5ex}{\text{$\overset{\begin{ytableau}\none[\ov{\tl{5}}] \\ \none[\ov{\tl{3}}]\end{ytableau}}{\longrightarrow}$}}\quad
\begin{ytableau}
\none & \none & \none & \none & \ov{\tl{5}} & \ov{\tl{4}} \\
\none & \none & \none & \none & \ov{\tl{3}} & \ov{\tl{3}} \\
\ov{\tl{5}} & \ov{\tl{5}} & \ov{\tl{4}} & \ov{\tl{4}} &\ov{\tl{2}} & \ov{\tl{2}} \\
\ov{\tl{4}} & \ov{\tl{4}} & \ov{\tl{3}} & \ov{\tl{2}} & \ov{\tl{1}} & \ov{\tl{1}}
\end{ytableau}\\ \mbox{} \\
&\quad\quad
\raisebox{-5ex}{\text{$\overset{\begin{ytableau}\none[\ov{\tl{5}}] \\ \none[\ov{\tl{2}}]\end{ytableau}}{\longrightarrow}$}}\ \
\begin{ytableau}
\none & \none & \none & \none & \none & \ov{\tl{5}} & \ov{\tl{4}} \\
\none & \none & \none & \none & \none & \ov{\tl{3}} & \ov{\tl{3}} \\
\ov{\tl{5}} & \ov{\tl{5}} & \ov{\tl{5}} & \ov{\tl{4}} & \ov{\tl{4}} &\ov{\tl{2}} & \ov{\tl{2}} \\
\ov{\tl{4}} & \ov{\tl{4}} & \ov{\tl{3}} & \ov{\tl{2}} & \ov{\tl{2}} & \ov{\tl{1}} & \ov{\tl{1}}
\end{ytableau} \ \
\raisebox{-5ex}{\text{$\overset{\begin{ytableau}\none[\ov{\tl{5}}] \\ \none[\ov{\tl{1}}]\end{ytableau}}{\longrightarrow}$}}\ \
\begin{ytableau}
\none & \none & \none & \none & \none & \none & \ov{\tl{5}} & \ov{\tl{4}} \\
\none & \none & \none & \none & \none & \none & \ov{\tl{3}} & \ov{\tl{3}} \\
\ov{\tl{5}} & \ov{\tl{5}} & \ov{\tl{5}} & \ov{\tl{5}} & \ov{\tl{4}} & \ov{\tl{4}} &\ov{\tl{2}} & \ov{\tl{2}} \\
\ov{\tl{4}} & \ov{\tl{4}} & \ov{\tl{3}} & \ov{\tl{2}} & \ov{\tl{2}} & \ov{\tl{1}} & \ov{\tl{1}} & \ov{\tl{1}}
\end{ytableau}\ \
\raisebox{-5ex}{\text{$\overset{\begin{ytableau}\none[\ov{\tl{5}}] \\ \none[\ov{\tl{1}}]\end{ytableau}}{\longrightarrow}$}}\ \
\begin{ytableau}
\none & \none & \none & \none & \none & \none & \none & \ov{\tl{5}} & \ov{\tl{4}} \\
\none & \none & \none & \none & \none & \none & \none & \ov{\tl{3}} & \ov{\tl{3}} \\
\ov{\tl{5}} & \ov{\tl{5}} & \ov{\tl{5}} & \ov{\tl{5}} & \ov{\tl{5}} & \ov{\tl{4}} & \ov{\tl{4}} &\ov{\tl{2}} & \ov{\tl{2}} \\
\ov{\tl{4}} & \ov{\tl{4}} & \ov{\tl{3}} & \ov{\tl{2}} & \ov{\tl{2}} & \ov{\tl{1}} & \ov{\tl{1}} & \ov{\tl{1}} & \ov{\tl{1}}
\end{ytableau}
\end{split}
\end{equation*}
Here we use the notation
$T \overset{\begin{ytableau}\none[\ov{\tl{\em j}}] \\ \none[\ov{\tl{\em i}}]\end{ytableau}}{\rightarrow} T'
$ when $T=P_{k+1}$, $T'=P_k$ and $(a_k,b_k)=(\ov{j},\ov{i})$.
Hence, we have
\begin{equation*} \label{eq:ex_burge}
\raisebox{-2ex}{$\kappa^{\se}({\bf c}) =$}\ \underbrace{\begin{ytableau}
\none & \none & \none & \none & \none & \none & \none & \ov{\tl{5}} & \ov{\tl{4}} \\
\none & \none & \none & \none & \none & \none & \none & \ov{\tl{3}} & \ov{\tl{3}} \\
\ov{\tl{5}} & \ov{\tl{5}} & \ov{\tl{5}} & \ov{\tl{5}} & \ov{\tl{5}} & \ov{\tl{4}} & \ov{\tl{4}} &\ov{\tl{2}} & \ov{\tl{2}} \\
\ov{\tl{4}} & \ov{\tl{4}} & \ov{\tl{3}} & \ov{\tl{2}} & \ov{\tl{2}} & \ov{\tl{1}} & \ov{\tl{1}} & \ov{\tl{1}} & \ov{\tl{1}}
\end{ytableau}}_{\text{$\sharp$ of columns  $= 9$}}
\end{equation*}
On the other hand, we have by Theorem \ref{thm:epsilon^*_n} 
$
\varepsilon^*_4({\bf c}) = 9
$ 
since the following double path ${\bf p}$
\begin{equation*} 
\xymatrixcolsep{0.1pc}\xymatrixrowsep{0.4pc}\xymatrix{
& & & & {\bf 2}\ar@{->}[ld]\ar@{->}[rd] & & & & \\
& & & {\bf 1}\ar@{->}[ld] & & {\bf 0}\ar@{->}[ld] & & \\
& & {\bf 1}\ar@{->}[ld] & & {\bf 2}\ar@{->}[ld] & & 1 & \\  
& {\bf 2} & & {\bf 1} & & 0 & & 1}
\end{equation*} 
takes the maximum value of $||{\bf c}||_{{\bf p}}$, 
which is equal to the number of columns of $\kappa^{\se}({\bf c})$.}
\end{ex}

\subsection{Isomorphism of affine crystals}

We regard
$[\ov{n}]=\{\,\ov{n}<\cdots< \ov{1}\,\}$ as the crystal of dual natural representation of $\mf l$ with ${\rm wt}(\ov{k})=-\epsilon_k$. 
Then $\W$ is a regular ${\mf l}$-crystal, where $w=w_1\ldots w_r$ is identified with $w_1\otimes \cdots \otimes w_r$.
For $\la\in \cP_n$, $SST(\la)$ is a regular $\mf l$-crystal with lowest weight $-\sum_{i=1}^n\la_i\ep_i$, where $T$ is identified with $w(T)$ \cite{KN}. In particular $\mc T^{\se}$ and ${\mc T}^{\nw}$ are regular ${\mf l}$-crystals.

Let us recall the $\hat{\mf g}_0$-crystal structure on $\mc T^{\se}$ \cite[Section 5.2]{K13}.
Let $T\in {\mc T}^{\se}$ be given. For $k\geq 1$, let $t_k$  be the entry
in the top of the $k$-th column of $T$ (enumerated from the right). Consider  $\sigma=(\sigma_1,\sigma_2,\ldots)$, where
\begin{equation*}
\sigma_k=
\begin{cases}
+ \ , & \text{if $t_k> \ov{n-1}$ or the $k$-th column is empty}, \\
- \ ,& \text{if  the $k$-th column has both $ \ov{n-1}$ and $ \ov{n}$ as its entries}, \\
\,\cdot\ \ , & \text{otherwise}.
\end{cases}
\end{equation*}
Then $\te_n T$ is
obtained from $T$ by removing  
\raisebox{-.6ex}{{\tiny ${\def\lr#1{\multicolumn{1}{|@{\hspace{.6ex}}c@{\hspace{.6ex}}|}{\raisebox{-.3ex}{$#1$}}}\raisebox{-.6ex}
{$\begin{array}[b]{c}
\cline{1-1}
\lr{ \ov{n}}\\
\cline{1-1}
\lr{ \!\ov{n\!-\!1}\!}\\
\cline{1-1}
\end{array}$}}$}}
in the column  corresponding to the right-most $-$ in ${\sigma}^{\rm red}$ (see Section \ref{subssec:tensor product rule on B} for $\sigma^{\rm red}$).  If there is no such $-$ sign, then we define $\te_n T={\bf 0}$, and 
$\tf_n T$ is obtained from $T$ by adding  
\raisebox{-.6ex}{{\tiny ${\def\lr#1{\multicolumn{1}{|@{\hspace{.6ex}}c@{\hspace{.6ex}}|}{\raisebox{-.3ex}{$#1$}}}\raisebox{-.6ex}
{$\begin{array}[b]{c}
\cline{1-1}
\lr{ \ov{n}}\\
\cline{1-1}
\lr{\!\ov{n\!-\!1}\!}\\
\cline{1-1}
\end{array}$}}$}}
column  corresponding to the left-most $+$ in ${\sigma}^{\rm red}$. 
Hence $\mc T^{\se}$ is a $\hat{\mf g}_0$-crystal with respect to ${\rm wt}$, 
$\varepsilon_i$, $\varphi_i$,
$\te_i$, $\tf_i$ $(i\in \hat{I} \setminus\{0\})$, where 
$\varepsilon_n(T)=\max\{\,k \,|\, \te_n^k T\neq {\bf 0}\,\}$ and
$\varphi_n(T)=\varepsilon_n(T)+ \langle {\rm wt}(T), h_n \rangle$.

Similarly, we have a $\hat{\mf g}_n$-crystal structure on $\mc T^{\nw}$ \cite[Section 5.2]{K13}. 
Let  $T\in {\mc T}^{\nw}$ be given.
For $k\geq 1$, let $t_k$  be the entry
in the bottom of the $k$-th column of $T$ (enumerated from the left). Consider  $\sigma=(\ldots,\sigma_2,\sigma_1)$, where
\begin{equation*}
\sigma_k=
\begin{cases}
- \ , & \text{if $t_k< \ov{2}$ or the $k$-th column is empty}, \\
+ \ ,& \text{if  the $k$-th column has both $\ov{1}$ and $\ov{2}$ as its entries,}\\
\, \cdot \ \ , & \text{otherwise}.
\end{cases}
\end{equation*}
Then $\te_0 T$ is given by adding
\raisebox{-.6ex}{{\tiny ${\def\lr#1{\multicolumn{1}{|@{\hspace{.6ex}}c@{\hspace{.6ex}}|}{\raisebox{-.3ex}{$#1$}}}\raisebox{-.6ex}
{$\begin{array}[b]{c}
\cline{1-1}
\lr{ \ov{2}}\\
\cline{1-1}
\lr{ \ov{1}}\\
\cline{1-1}
\end{array}$}}$}}
to the bottom of the column  corresponding to the right-most $-$ in ${\sigma}^{\rm red}$,
and $\tf_0 T$ is obtained from $T$ by removing
\raisebox{-.6ex}{{\tiny ${\def\lr#1{\multicolumn{1}{|@{\hspace{.6ex}}c@{\hspace{.6ex}}|}{\raisebox{-.3ex}{$#1$}}}\raisebox{-.6ex}
{$\begin{array}[b]{c}
\cline{1-1}
\lr{ \ov{2}}\\
\cline{1-1}
\lr{ \ov{1}}\\
\cline{1-1}
\end{array}$}}$}}
in the
column  corresponding to the left-most $+$ in ${\sigma}^{\rm red}$. 
If there is no such $+$ sign, then we define $\tf_0 T={\bf 0}$.
Hence ${\mc T}^{\nw}$ is a $\hat{\mf g}_n$-crystal with respect to ${\rm wt}$, $\varepsilon_i$, $\varphi_i$, $\te_i$, $\tf_i$ $(i\in \hat{I}\setminus\{n\})$, where 
$\varphi_0(T)=\max\{\,k\ \,|\, \tf_0^k T\neq {\bf 0}\,\}$ and
$\varepsilon_0(T)=\varphi_0(T)- \langle {\rm wt}(T), h_0 \rangle$.

\begin{thm}\label{thm:isomorphism theorem}
The bijection $\kappa^{\se}$ in \eqref{eq:kappa_se} is an isomorphism of $\hat{\mf g}_0$-crystals, and  
the bijection $\kappa^{\nw}$ in \eqref{eq:kappa_nw} is an isomorphism of $\hat{\mf g}_n$-crystals.
\end{thm}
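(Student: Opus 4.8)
The plan is to establish the statement for $\kappa^{\searrow}$ and for $\kappa^{\nwarrow}$ in parallel, since the two cases differ only in which Levi-completing node must be controlled. Writing $\mf l$ for the common Levi with index set $J=\{1,\dots,n-1\}$, the algebra $\hat{\mf g}_0$ is obtained from $\mf l$ by adjoining node $n$, and $\hat{\mf g}_n$ by adjoining node $0$. Accordingly I would check separately that each map commutes with the $\mf l$-operators $\te_i,\tf_i$ ($i\in J$) and with the single remaining operator ($\te_n,\tf_n$ for $\kappa^{\searrow}$, and $\te_0,\tf_0$ for $\kappa^{\nwarrow}$). Weight preservation needs no work: Schensted insertion preserves content, the content of $P^{\searrow}({\bf c})$ (resp.\ $P^{\nwarrow}({\bf c})$) is the multiset of all letters of the biword, and ${\rm wt}$ is minus this content on both sides.

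For the $\mf l$-part I would identify $\B^J$ with the biwords $\Omega$ (resp.\ $\Omega'$) via ${\bf c}\mapsto{\bf c}(\ba,\bb)$ and verify that, under this identification, the signatures $\sigma_i({\bf c})$ of Proposition \ref{prop:signature for type D} for $i\in J$ reproduce the standard $\mf l$-crystal structure on biwords. Since $P^{\searrow}$ and $P^{\nwarrow}$ are composites of Schensted column insertions, and column insertion is a morphism of $\mf l$-crystals (Knuth-equivalent reading words carry the same $\mf l$-crystal structure), both $\kappa^{\searrow}$ and $\kappa^{\nwarrow}$ are isomorphisms of $\mf l$-crystals; this is the Burge analogue of Lascoux's theorem and makes Proposition \ref{prop:decomp of B^J} explicit. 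In particular, by that proposition $\B^J$, and hence each of $\mc T^{\searrow}$ and $\mc T^{\nwarrow}$, is \emph{multiplicity-free} as a regular $\mf l$-crystal.

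The case $\kappa^{\nwarrow}$ is the more accessible one, because $\te_0,\tf_0$ on $\B^J$ are explicit: by \eqref{eq:e_0 and f_0 on B^J} they simply raise or lower $c_{\ep_1+\ep_2}$, i.e.\ add or delete one biword pair $(\ov 2,\ov 1)$. Under the order on $\Omega'$ these pairs are the largest, hence are inserted last in the formation of $P^{\nwarrow}$ and occupy the $\ov 2/\ov 1$ dominoes at the bottoms of columns; I would show that deleting such a pair is exactly removal of the bottom $\ov 2/\ov 1$ domino selected by the column signature of $\mc T^{\nwarrow}$, giving $\kappa^{\nwarrow}\circ\tf_0=\tf_0\circ\kappa^{\nwarrow}$ and likewise for $\te_0$. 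For $\kappa^{\searrow}$ the operator $\tf_n$ on $\B^J$ is \emph{not} given by a signature rule, so I would proceed structurally. Theorem \ref{thm:main-1} identifies $\B^{J,s}\otimes T_{s\varpi_n}$ with $B(s\varpi_n)$, and Theorem \ref{thm:epsilon^*_n} shows that $\varepsilon_n^*({\bf c})$, computed by double paths, equals the number of columns of $\kappa^{\searrow}({\bf c})$; thus $\kappa^{\searrow}$ carries the filtration $\{\B^{J,s}\}$ to tableaux with at most $s$ columns. Establishing, by the regularity argument used for Theorem \ref{thm:main-1}, that $\mc T^{\searrow}$ with its domino rule is itself $\bigcup_s B(s\varpi_n)$ as a $\hat{\mf g}_0$-crystal, I would then use that an isomorphism of multiplicity-free regular $\mf l$-crystals is unique: since $\kappa^{\searrow}$ is such an $\mf l$-isomorphism, it must coincide with the $\hat{\mf g}_0$-isomorphism and so is itself one. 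Equivalently, the $\kappa^{\searrow}$-statement can be transported from the $\kappa^{\nwarrow}$-statement through the left-right symmetry $\alpha_i\leftrightarrow-\alpha_{n-i}$ of the $D_n^{(1)}$ diagram together with the $180^\circ$-rotation and complementation $\ov k\leftrightarrow\ov{n+1-k}$ exchanging $\mc T^{\nwarrow}$ and $\mc T^{\searrow}$.

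I expect the main obstacle to be the control of node $n$ for $\kappa^{\searrow}$. Because node $n$ is not simply braided for $\bi_0$, the operators $\te_n,\tf_n$ on $\B^J$ cannot be read off a signature, and the argument must instead route through $\varepsilon_n^*$ and the double-path formula of Theorem \ref{thm:epsilon^*_n}. The delicate points are to prove that the maximal double-path weight indeed equals the number of columns produced by Burge insertion, and to show that $\mc T^{\searrow}$ with the spin-domino rule is a regular $\hat{\mf g}_0$-crystal of the required type so that the uniqueness argument applies; reconciling the creation and destruction of the spin domino $\ov n/\ov{n-1}$ under column insertion with the combinatorics of double paths is the technical heart. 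By contrast, the operators for $i\in J$ (classical) and for node $0$ (explicit by \eqref{eq:e_0 and f_0 on B^J}) are comparatively routine.
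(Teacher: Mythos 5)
Your overall architecture (handle the Levi nodes $i\in J$ and the one remaining affine node separately) matches the paper's, but you have inverted where the difficulty lies, and the step you dismiss as routine is exactly where your argument has a genuine gap. For $i\in J$ you claim $\kappa^{\se}\circ\tf_i=\tf_i\circ\kappa^{\se}$ follows because ``$P^{\se}$ is a composite of Schensted column insertions and column insertion is an $\mf l$-crystal morphism.'' That reasoning applies to $P(w)$ of a single word $w$; the Burge map is not of that form. Each step column-inserts $b_k$ \emph{and then places $a_k$ into the same tableau} at the newly created corner, so the final tableau interleaves the letters of $\ba$ and $\bb$, while the operator $\tf_i$ on $\B^J$ (Proposition \ref{prop:signature for type D}) can change a letter in either row of the biword: the signature $\sigma_{i,1}\cdot\sigma_{i,2}$ involves both $c_{\ov{j}\,\ov{i}},c_{\ov{j}\,\ov{i+1}}$ and $c_{\ov{i}\,\ov{k}},c_{\ov{i+1}\,\ov{k}}$. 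Reconciling this with the tableau operator is the technical heart of the paper's proof: it decomposes $\bc=\bc_{\Delta_{i+1}^c}\otimes\bc_{\Delta_{i+1}}$, introduces the skew insertion pair $({\tt P}(\bc),{\tt Q}(\bc))$ and the gluing algorithm ${\tt T}(\bc)$, and proves ${\tt T}(\bc)=\kappa^{\se}(\bc)$ by a case analysis of how the $\ov{i+1}/\ov{i}$ dominoes sit and slide (Lemmas \ref{lem:Q commutes with f_k} and \ref{lem:Delta_k}). Without an equivalent of this, your $\mf l$-isomorphism claim --- on which everything else in your proposal rests, including multiplicity-freeness and the uniqueness argument --- is unproved.

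For node $n$ your route via uniqueness of an isomorphism between multiplicity-free regular $\mf l$-crystals is genuinely different from the paper's (which instead views $\bc$ as a pair inside a type $D_{n+1}$ crystal and invokes the argument of \cite[Theorem 3.6]{K09}), and it could in principle work, but as written it is circular: you need $\kappa^{\se}$ to carry $\B^{J,s}$ onto $\mc T^s$, which you justify by ``$\varepsilon_n^*(\bc)$ equals the number of columns of $\kappa^{\se}(\bc)$''; yet that equality, \eqref{eq:formula for la_1}, is deduced in the paper \emph{from} Theorem \ref{thm:isomorphism theorem}. Theorem \ref{thm:epsilon^*_n} only gives the double-path formula for $\varepsilon_n^*$, not its identification with the column count of the Burge tableau, so you would need an independent Greene-type statement (essentially the $l=1$ case of Theorem \ref{thm:shape}) before the filtration argument can start. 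Similarly, your quick treatment of $\tf_0$ for $\kappa^{\nw}$ assumes the last-inserted pair $(\ov 2,\ov 1)$ simply appends a bottom domino, but column-inserting $\ov 2$ can bump existing entries $\ov 2$ and $\ov 1$, so this step too requires the same kind of domino analysis rather than being immediate.
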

\pf  The proof of (1) is given in Section \ref{pf:burge}. The proof of (2) is similar to (1).
\qed

For a semistandard tableau $T$ of skew shape, let $[T]$ denote the equivalence class of $T$ with respect to Knuth equivalence. If we define 
\begin{equation*}
\begin{split}
\td{x}_i[T]=
\begin{cases}
[\td{x}_0 T^{\nw}], & \text{if $i=0$}, \\
[\td{x}_n T^{\se}], & \text{if $i=n$}, \\
[\td{x}_iT], & \text{otherwise},
\end{cases}
\end{split}
\end{equation*}
for $i\in \hat{I}$ and $x=e,f$ (we assume that $[{\bf 0}]={\bf 0}$), then the set
\begin{equation}
{\mc T}=\{\,[T]\,|\,T\in {\mc T}^{\se}\,\}=\{\,[T]\,|\,T\in {\mc T}^{\nw}\,\}
\end{equation}
is a $\hat{\g}$-crystal with respect to $\te_i$, $\tf_i$ $(i\in I)$, 
where ${\rm wt}$, $\varepsilon_i$, and $\varphi_i$ are well-defined on $[T]$ \cite[Section 5.3]{K13}. 
Therefore, 
\begin{cor}
The map
\begin{equation*}
\xymatrixcolsep{3pc}\xymatrixrowsep{0pc}\xymatrix{
\kappa : \B^J  \ \ar@{->}[r] & \ {\mc T} \quad\quad\quad\quad\quad \\
\quad {\bf c} \ar@{|->}[r] & [P^{\nw}({\bf c})]=[P^{\se}({\bf c})]},
\end{equation*}
is an isomorphism of $\hat{\mf g}$-crystals.
\end{cor}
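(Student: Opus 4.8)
The plan is to obtain the corollary by assembling the two halves of Theorem \ref{thm:isomorphism theorem} along the common Levi direction, once $\kappa$ has been shown to be well defined. First I would check that $\kappa$ makes sense, that is, $[P^{\nw}({\bf c})]=[P^{\se}({\bf c})]$ for every ${\bf c}\in\B^J$. Granting this, each of the maps ${\bf c}\mapsto[P^{\se}({\bf c})]$ and ${\bf c}\mapsto[P^{\nw}({\bf c})]$ is a bijection onto $\mathcal{T}$: indeed $\kappa^{\se}$ (resp. $\kappa^{\nw}$) is a bijection onto $\mathcal{T}^{\se}$ (resp. $\mathcal{T}^{\nw}$) by \eqref{eq:kappa_se} (resp. \eqref{eq:kappa_nw}), and $T\mapsto[T]$ restricts to a bijection from $\mathcal{T}^{\se}$ (resp. $\mathcal{T}^{\nw}$) onto $\mathcal{T}$ since each Knuth class in $\mathcal{T}$ contains a unique tableau of the relevant normal shape.

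For the well-definedness I would argue through the common $\mf l$-crystal structure, where $\mf l=\hat{\mf g}_0\cap\hat{\mf g}_n$. Restricting $\hat{\mf g}_0$ (resp. $\hat{\mf g}_n$) to $\mf l$, Theorem \ref{thm:isomorphism theorem} shows that both $\kappa^{\se}$ and $\kappa^{\nw}$ are isomorphisms of $\mf l$-crystals; moreover on $\mathcal{T}$ the operators $\te_i,\tf_i$ for $i\in J$ act by $[\te_iT],[\tf_iT]$, which are compatible with Knuth equivalence. It therefore suffices to verify $[P^{\nw}({\bf c})]=[P^{\se}({\bf c})]$ on the $\mf l$-highest weight elements, since every element of $\B^J$ is reached from such an element by the $\tf_i$ ($i\in J$), and these are intertwined by $\kappa^{\se}$, by $\kappa^{\nw}$, and by the class map $[\,\cdot\,]$. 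On a highest weight element, described in Proposition \ref{prop:decomp of B^J}, both $P^{\se}({\bf c})$ and $P^{\nw}({\bf c})$ are the $\mf l$-highest weight tableaux of the conjugate shapes $\la^\pi$ and $\la$ with the same content, and these lie in one Knuth class. This base case is the type $D$ analogue of the standard RSK symmetry, and I expect it to be the main obstacle.

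With $\kappa$ well defined, I would then verify that it intertwines $\te_i,\tf_i$ for every $i\in\hat{I}$, splitting into three cases. For $i\in J=\{1,\ldots,n-1\}$ the operators agree because they are the common $\mf l$-operators: $\kappa^{\se}$ intertwines them by Theorem \ref{thm:isomorphism theorem}(1), so $\kappa(\tf_i{\bf c})=[P^{\se}(\tf_i{\bf c})]=[\tf_iP^{\se}({\bf c})]=\tf_i\kappa({\bf c})$. For $i=n$ I use Theorem \ref{thm:isomorphism theorem}(1) together with the fact that $\te_n,\tf_n$ on $\mathcal{T}$ are computed on the $\se$-representative and $(P^{\se}({\bf c}))^{\se}=P^{\se}({\bf c})$. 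For $i=0$ I symmetrically use Theorem \ref{thm:isomorphism theorem}(2), noting that $0\in\hat{I}\setminus\{n\}$, that $\te_0,\tf_0$ on $\mathcal{T}$ are computed on the $\nw$-representative with $(P^{\nw}({\bf c}))^{\nw}=P^{\nw}({\bf c})$, and that the operators $\te_0,\tf_0$ on $\B^J$ are the ones fixed in \eqref{eq:e_0 and f_0 on B^J}. Since these cases exhaust $\hat{I}=\{0,1,\ldots,n\}$, and since $\mathrm{wt}$, $\varepsilon_i$, $\varphi_i$ are preserved by $\kappa^{\se}$ and $\kappa^{\nw}$ and are well defined on Knuth classes, it follows that $\kappa$ is an isomorphism of $\hat{\mf g}$-crystals.
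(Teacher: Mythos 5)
Your proposal is correct and follows essentially the same route as the paper, which deduces the corollary immediately from Theorem \ref{thm:isomorphism theorem} together with the $\hat{\mf g}$-crystal structure on $\mc T$ imported from \cite{K13}. The only material addition is your self-contained verification that $[P^{\nw}({\bf c})]=[P^{\se}({\bf c})]$ by reducing to $\mf l$-highest weight elements; that step is sound, and in fact less of an obstacle than you suggest, since once both $\kappa^{\se}$ and $\kappa^{\nw}$ are known to be $\mf l$-crystal isomorphisms preserving weights, the $\mf l$-highest weight tableaux of shapes $\la^\pi$ and $\la$ are automatically the unique anti-normal and normal representatives of a single Knuth class, whereas the paper leaves this point implicit in its citation.
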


For $s\geq 1$, let 
${\mc T}^s=\{\,[T] \,|\,\ell({\rm sh}(T)')\leq s\,\}\subset {\mc T}$.
It is shown in \cite[Theorem 5.4]{K13} that ${\mc T}^s\otimes T_{s\varpi_n}\cong B^{n,s}$. Therefore,  

\begin{cor}\label{eq:isomorphism theorem at s}
The map $\kappa$ when restricted to $\B^{J,s}$ gives an isomorphism of $\hat{\mf g}$-crystals
\begin{equation*}
\xymatrixcolsep{3pc}\xymatrixrowsep{0pc}\xymatrix{
\kappa : \B^{J,s}  \ \ar@{->}[r] & \ {\mc T}^s}.
\end{equation*}
\end{cor}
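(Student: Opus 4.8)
The plan is to deduce the statement from the previous corollary, which already gives that $\kappa\colon \B^J\to \mc T$ is an isomorphism of $\hat{\mf g}$-crystals. Note first that $\B^{J,s}$ is a subcrystal of $\B^J$, i.e. it is closed under all $\te_i,\tf_i$ for $i\in\hat I$, since $\B^{J,s}\otimes T_{s\varpi_n}\cong B^{n,s}$ by Theorem \ref{thm:main-1}; likewise $\mc T^s$ is a subcrystal of $\mc T$, because $\mc T^s\otimes T_{s\varpi_n}\cong B^{n,s}$ by \cite[Theorem 5.4]{K13}. Consequently the restriction $\kappa|_{\B^{J,s}}$ is automatically a morphism of $\hat{\mf g}$-crystals onto its image. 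It therefore suffices to prove the set-theoretic equality $\kappa(\B^{J,s})=\mc T^s$; the restriction is then a bijective crystal morphism, hence an isomorphism onto $\mc T^s$.

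By definition $\B^{J,s}=\{\,{\bf c}\in\B^J\mid \varepsilon_n^*({\bf c})\le s\,\}$, while $\mc T^s=\{\,[T]\mid \ell({\rm sh}(T)')\le s\,\}$, the latter condition meaning that the (straight) shape of $\kappa({\bf c})$ has at most $s$ columns. Thus the desired equality follows once I establish the pointwise identity
\[
\varepsilon_n^*({\bf c})=\ell\big({\rm sh}(\kappa({\bf c}))'\big)\qquad({\bf c}\in\B^J),
\]
that is, $\varepsilon_n^*({\bf c})$ equals the number of columns of $P^{\se}({\bf c})=\kappa^{\se}({\bf c})$. Granting this, ${\bf c}\in\B^{J,s}$ holds exactly when $\kappa({\bf c})\in\mc T^s$, giving $\kappa(\B^{J,s})=\mc T^s$ and finishing the proof.

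To prove the identity I would combine the two available descriptions of these quantities. On one side, Theorem \ref{thm:epsilon^*_n} expresses $\varepsilon_n^*({\bf c})=\max\{\,||{\bf c}||_{\bf p}\mid \text{${\bf p}$ a double path in $\Delta_n$}\,\}$. On the other side, the type $D$ analogue of Greene's formula (Theorem \ref{thm:shape}) describes the whole shape of $\kappa({\bf c})$ through maxima of $||{\bf c}||$ taken over families of non-intersecting double paths; its $k=1$ instance says precisely that the number of columns $\ell({\rm sh}(\kappa({\bf c}))')$ equals the maximum of $||{\bf c}||_{\bf p}$ over a single double path ${\bf p}$. Comparing the two maxima yields the identity.

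The main obstacle is exactly this column-count identity, that is, the $k=1$ case of the shape formula; everything else is formal bookkeeping about restricting an isomorphism to subcrystals. The plausibility is visible in the worked example, where $\kappa^{\se}({\bf c})$ has $9$ columns and $\varepsilon_4^*({\bf c})=9$ is realized by an explicit double path. If one prefers not to invoke Theorem \ref{thm:shape}, an alternative is a direct argument tracking how the reverse column insertions building $P^{\se}({\bf c})$ change the number of columns and matching the increments against the double-path combinatorics of $\varepsilon_n^*$ supplied by Theorem \ref{thm:epsilon^*_n}; but routing through the $k=1$ Greene formula is the cleaner path.
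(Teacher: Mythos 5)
Your proposal is correct, but it takes a genuinely different route from the paper. The paper obtains this corollary essentially for free from the crystal-theoretic identifications already in hand: $\kappa:\B^J\to{\mc T}$ is an isomorphism of $\hat{\mf g}$-crystals, $\B^{J,s}\otimes T_{s\varpi_n}\cong B^{n,s}$ by Theorem \ref{thm:main-1} and \eqref{eq:B^J_s-2}, and ${\mc T}^s\otimes T_{s\varpi_n}\cong B^{n,s}$ by \cite[Theorem 5.4]{K13}; since $B^{n,s}$ is classically irreducible (hence connected) and both subcrystals contain the matching classical highest weight elements ${\bf 0}$ and $[\emptyset]$, the isomorphism must carry one onto the other. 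No pointwise computation is needed. Your argument instead establishes the set-theoretic equality $\kappa(\B^{J,s})={\mc T}^s$ directly via the identity $\varepsilon_n^*({\bf c})=\la_1({\bf c})$, obtained by comparing Theorem \ref{thm:epsilon^*_n} with the $l=1$ case of Theorem \ref{thm:shape}. This is valid and not circular: although the paper presents \eqref{eq:formula for la_1} as a consequence of this very corollary, Theorem \ref{thm:shape} is proved in Section \ref{pf:shape} independently of it (via Proposition \ref{prop:epsilon_shape} and the trail/double-path bijections), so citing the theorem rather than the displayed equation, as you do, is the right move. The trade-off is that your route invokes the heaviest result of the paper (the Greene-type shape formula) to prove a statement the authors get from soft connectedness considerations, while in exchange it yields an explicit, element-by-element description of the correspondence $\B^{J,s}\leftrightarrow{\mc T}^s$ rather than an abstract identification. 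One small point worth making explicit in your write-up: the passage from ``$\kappa$ restricted to $\B^{J,s}$ is a bijection onto ${\mc T}^s$'' to ``it is an isomorphism of $\hat{\mf g}$-crystals'' uses that both subsets are closed under the affine Kashiwara operators (so that the induced structures are the restricted ones), which you correctly source from Theorem \ref{thm:main-1} and \cite[Theorem 5.4]{K13}.
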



\subsection{Shape formula} 

For ${\bf c}\in \B^J$, let 
\begin{equation}\label{eq:la(c)}
\la({\bf c})=(\la_1({\bf c})\geq \ldots \geq  \la_{\ell}({\bf c}))
\end{equation}
be the partition corresponding to the regular ${\mf l}$-subcrystal of $\B^J$ including ${\bf c}$, that is, $\la({\bf c})={\rm sh}(\kappa^{\nw}({\bf c}))$ by Theorem \ref{thm:isomorphism theorem}. 
Note that $\ell=2[\frac{n}{2}]$ and $\la_{2i-1}(\bc)=\la_{2i}(\bc)$ for $1\leq i\leq [\frac{n}{2}]$.
We have by Theorem \ref{thm:epsilon^*_n} and Corollary \ref{eq:isomorphism theorem at s}
\begin{equation}\label{eq:formula for la_1}
\la_1({\bf c}) = \max\{\,||{\bf c}||_{\bf p}\,|\,\text{$\bf p$ is a double path at $\theta$}\,\},
\end{equation} 

We can further characterize the whole partition $\la(\bc)$ in terms of double paths on $\Delta_n$ generalizing \eqref{eq:formula for la_1} as follows.

\begin{thm} \label{thm:shape}
For ${\bf c}\in \B^J$ and $1\leq l\leq [\frac{n}{2}]$, we have
\begin{equation*}
\la_1({\bf c})+\la_3({\bf c})+\cdots+\la_{2l-1}({\bf c}) = 
\max_{{\bf p}_1,\ldots , {\bf p}_{l}}
\{\,||{\bf c}||_{{\bf p}_1}+\cdots+||{\bf c}||_{{\bf p}_l}\,\},
\end{equation*}
where ${\bf p}_1,\ldots, {\bf p}_l$ are mutually non-intersecting double paths in $\Delta_n$ and each ${\bf p}_i$ starts at the $(2i-1)$-th row of $\Delta_n$ for $1\leq i\leq l$. 
\end{thm}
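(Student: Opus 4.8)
The plan is to match $\la_1({\bf c})+\la_3({\bf c})+\cdots+\la_{2l-1}({\bf c})$ with a Greene-type maximum over disjoint chains, and then to fold that maximum into the language of double paths, following the reformulation of trail combinatorics flagged in the introduction. Recall from Theorem \ref{thm:isomorphism theorem} that $\la({\bf c})=\mathrm{sh}(\kappa^{\nw}({\bf c}))$, and that every column of $\kappa^{\nw}({\bf c})$ has even length; hence $\la_{2i-1}({\bf c})=\la_{2i}({\bf c})$ and $\la_1+\la_3+\cdots+\la_{2l-1}=\tfrac12(\la_1+\cdots+\la_{2l})$. The case $l=1$ is precisely \eqref{eq:formula for la_1}, which is already a consequence of Theorem \ref{thm:epsilon^*_n}; the task is to establish the analogue for all $l$.

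I would first pass to the symmetric array attached to ${\bf c}$. Identifying $\Phi^+(J)\cong\Delta_n$ with the strictly upper-triangular positions via $c_{\ov j\,\ov i}\leftrightarrow\{i,j\}$ ($i<j$) and completing to a symmetric matrix $M=M({\bf c})$ with zero diagonal, the Burge map \cite{B} reproduces the RSK shape of $M$ (this is the sense in which it symmetrizes type $A$ RSK, cf. the introduction), so $\la({\bf c})$ is the RSK shape of $M$. Greene's theorem \cite{G} then gives $\la_1+\cdots+\la_{2l}$ as the maximal total weight of a family of $2l$ pairwise disjoint weakly increasing chains in $M$. Because $M$ is symmetric, I would show that an extremal family can be chosen invariant under transposition; such a family splits into $l$ mirror-pairs $\{C_i,C_i^{t}\}$, and recording each pair's upper-triangular weight yields $\la_1+\la_3+\cdots+\la_{2l-1}$ as the maximal total upper-triangular weight of $l$ disjoint mirror-pairs.

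The heart of the argument is the dictionary between mirror-pairs and double paths. Folding a mirror-pair $\{C,C^{t}\}$ onto the upper triangle sends $C$ and its reflection to the two non-intersecting strands $p_1$ (left) and $p_2$ (right) of a double path ${\bf p}$ whose apex sits where $C$ crosses the diagonal, i.e.\ at the entry of $C$ farthest from the diagonal, and the upper-triangular weight of the pair is exactly $||{\bf c}||_{\bf p}$. Under the identification $\mathrm{row}\,k\leftrightarrow\{j-i=n-k\}$, the apex of ${\bf p}$ lies in row $n-(j-i)$, and the requirement that the $l$ mirror-pairs be disjoint and transpose-symmetric forces their apices to be nested two diagonals apart, which is precisely the condition that the $i$-th double path starts at the $(2i-1)$-th row of $\Delta_n$. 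I expect the main obstacle to be this step: proving that the fold is a weight- and disjointness-preserving bijection between extremal disjoint mirror-pairs and families of mutually non-intersecting double paths with the prescribed starting rows, together with the auxiliary claim that a transpose-invariant extremal chain family always exists (a symmetric refinement of Greene's theorem, in the spirit of Gansner's symmetric chain decompositions). Handling the two parities of $n$ and the degenerate strands terminating at the bottom of $\Delta_n$ will require separate bookkeeping.

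Finally, as a robustness check and an alternative to the symmetric-chain input, I would verify the identity directly on $\mf l$-highest weight elements, where by \eqref{eq:characterization of l h.w.} both sides reduce to $\sum_{i=1}^{l}c_{\ov{n-2i+2}\,\ov{n-2i+1}}$, and then argue that the right-hand maximum is constant on each $\mf l$-component of $\B^J$. This invariance is the tropical shadow of the Knuth-invariance of Greene's chain maxima; combined with Proposition \ref{prop:decomp of B^J} and the fact that $\kappa^{\nw}$ restricts to an $\mf l$-crystal isomorphism, it reduces the general case to the highest weight computation and gives a self-contained route that bypasses the explicit symmetric RSK bookkeeping, at the cost of checking the invariance of the double-path maximum under $\tf_i$ for $i\in J$ through the signature rule of Proposition \ref{prop:signature for type D}.
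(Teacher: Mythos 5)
Your main route breaks at its first step. The shape $\la(\bc)=\mathrm{sh}(\kappa^{\se}(\bc))$ produced by Burge's domino insertion is \emph{not} the RSK shape of the symmetrized zero-diagonal matrix $M(\bc)$, so Greene's theorem for $M(\bc)$ is not available as a starting point. You can see this on the paper's own example after \eqref{eq:kappa_nw}: there $\la_1(\bc)=9$, but the maximal weight of a single weakly increasing chain in the $5\times 5$ symmetric matrix with upper-triangular entries $M_{12}=1, M_{13}=1, M_{14}=0, M_{15}=2, M_{23}=0, M_{24}=2, M_{25}=1, M_{34}=1, M_{35}=1, M_{45}=2$ is $8$ (attained by $(1,2),(1,3),(1,5),(2,5),(3,5),(4,5)$, and a routine dynamic-programming check shows $8$ is optimal). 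Relatedly, your folding dictionary is not the right picture: the two strands of a double path are not mirror images of one another across the diagonal — in the displayed optimal double path the strands are $(1,5),(2,5),(3,5),(4,5)$ and $(1,5),(1,4),(2,4),(3,4)$, both terminating on the subdiagonal $j=i+1$ rather than meeting the diagonal, and neither is the transpose of the other. This is exactly why the paper calls Theorem \ref{thm:shape} a non-trivial generalization of Greene's result rather than a folding of it, in contrast with types $B$ and $C$.

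Your fallback route (verify the identity on the $\mf l$-highest weight elements \eqref{eq:characterization of l h.w.}, then show the double-path maximum is constant on $\mf l$-components) is sound in outline and the base-case computation is correct, but the invariance of $\max_{\underline{\bf p}}\{\,||\bc||_{{\bf p}_1}+\cdots+||\bc||_{{\bf p}_l}\,\}$ under the moves $\bc\mapsto \bc-{\bf 1}_{\gamma_s}+{\bf 1}_{\gamma'_s}$ prescribed by Proposition \ref{prop:signature for type D} is precisely the entire content of the theorem; you have named it as something to check rather than proved it, and it is not a formal consequence of Knuth invariance since the objects being maximized over are not chains of the symmetric matrix. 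The paper takes a different mechanism altogether: it computes $\la_{2l-1}(\bc)$ as a string datum $\varepsilon_{j_{k_l}}(\te^{max}_{j_{k_l-1}}\cdots\te^{max}_{j_1}\bc)$, converts this by the Berenstein--Zelevinsky trail formula \eqref{eq:BZ_formula1} into a difference of minima over $\bi_0$-trails in the minuscule spin representation $V(\varpi_n)$, and then identifies those trails with the $0/1$-arrays $\mc D_l$ and hence with $l$-tuples of non-intersecting double paths (Lemmas \ref{lem:bijection from T to D general} and \ref{lem:bijection from P to D general}), after which the claimed formula falls out by telescoping. To rescue your proposal you would need either to carry out the $\tf_i$-invariance argument in full, or to replace the symmetric-RSK input with the trail machinery.
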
 
\pf The proof is given in Section \ref{pf:shape}. \qed

\begin{ex}
{\rm
Let $n=6$ and let ${\bf c}\in \B^J$ be given by
\begin{equation*} 
\Tl{$\xymatrixcolsep{0pc}\xymatrixrowsep{0.3pc}\xymatrix{
& & & & 1 & &  & &    \\
& & & 2 & & 3 & & &  & \\
& & 2 & & 1 & & 1 & & \\
& 1 & & 3 & & 2 & & 1 & \\  
2 & & 3 & & 2 & & 0 & & 3}$}
\end{equation*}
Then we have
\begin{equation*} \label{eq:ex_burge2}
\Tl{$\ytableausetup{aligntableaux=center, boxsize=1.1em}
\kappa^{\se}({\bf c}) = \begin{ytableau}
\none & \none & \none & \none & \none & \none & \none & \none & \none & \none & \none & \none & \none & \none & \none & \none & \none & \ov{\tl{6}} & \ov{\tl{6}} \\
\none & \none & \none & \none & \none & \none & \none & \none & \none & \none & \none & \none & \none & \none & \none & \none & \none & \ov{\tl{5}} & \ov{\tl{5}} & \none \\
\none & \none & \none & \none & \none & \none & \none & \none & \none & \none & \none & \none & \none & \ov{\tl{6}} & \ov{\tl{6}}& \ov{\tl{5}} & \ov{\tl{4}} & \ov{\tl{4}} & \ov{\tl{4}} \\
\none & \none & \none & \none & \none & \none & \none & \none & \none & \none & \none & \none & \none & \ov{\tl{5}} & \ov{\tl{3}}& \ov{\tl{3}} & \ov{\tl{3}} & \ov{\tl{3}} & \ov{\tl{3}} & \none\\
\ov{\tl{6}} & \ov{\tl{6}} & \ov{\tl{6}} & \ov{\tl{6}} & \ov{\tl{5}} & \ov{\tl{5}} & \ov{\tl{5}} & \ov{\tl{5}} & \ov{\tl{5}} & \ov{\tl{4}} & \ov{\tl{3}} & \ov{\tl{3}} & \ov{\tl{2}} & \ov{\tl{2}} & \ov{\tl{2}} & \ov{\tl{2}} & \ov{\tl{2}} & \ov{\tl{2}} & \ov{\tl{2}} \\
\ov{\tl{5}} & \ov{\tl{5}} & \ov{\tl{5}} & \ov{\tl{4}} & \ov{\tl{4}} & \ov{\tl{4}} & \ov{\tl{4}} & \ov{\tl{4}} & \ov{\tl{3}} & \ov{\tl{2}} & \ov{\tl{1}} & \ov{\tl{1}} & \ov{\tl{1}} & \ov{\tl{1}} & \ov{\tl{1}} & \ov{\tl{1}} & \ov{\tl{1}} & \ov{\tl{1}} & \ov{\tl{1}} & \none[]
\end{ytableau}$}
\end{equation*}
where $\la(\bc)=(19,19,6,6,2,2)$.

On the other hand, the double path ${\bf p}$ at $\ep_1 + \ep_6$ given by
\begin{equation*} 
\Tl{$\xymatrixcolsep{0.3pc}\xymatrixrowsep{0.5pc}\xymatrix{
& & & & {\blue{\bf 1}}\ar@{->}[ld]\ar@{->}[rd] & &  & &    \\
& & &  {\blue{\bf 2}}\ar@{->}[ld]  & & {\blue{\bf 3}}\ar@{->}[rd] & & &  & \\
& & {\blue{\bf 2}}\ar@{->}[rd] & & {1} & &  {\blue{\bf 1}}\ar@{->}[ld] & & \\
& {1} & & {\blue{\bf 3}}\ar@{->}[ld] & & {\blue{\bf 2}}\ar@{->}[ld] &  & {1} & \\  
{2} &  & {\blue{\bf 3}} & & {\blue{\bf 2}} & & {0} & & {3}}$}
\end{equation*} has maximal value $||{\bf c}||_{{\bf p}} = 19$, and the pair of double paths ${\bf p}_{1}$ (in blue) and ${\bf p}_{2}$ (in red) at $\ep_1 + \ep_6$ and $\ep_3 + \ep_6$, respectively, given by
\begin{equation*} 
\Tl{$\xymatrixcolsep{0.3pc}\xymatrixrowsep{0.5pc}\xymatrix{
& & & & {\blue{\bf 1}}\ar@{->}[ld]\ar@{->}[rd] & &  & &    \\
& & & {\blue{\bf 2}}\ar@{->}[rd]  & & {\blue{\bf 3}}\ar@{->}[rd] & & &  & \\
& & {\red{\bf 2}}\ar@{->}[ld]\ar@{->}[rd] & & {\blue{\bf 1}}\ar@{->}[rd] & & {\blue{\bf 1}}\ar@{->}[rd] & & \\
& {\red{\bf 1}}\ar@{->}[rd] & & {\red{\bf 3}}\ar@{->}[rd] & & {\blue{\bf 2}}\ar@{->}[rd] &  & {\blue{\bf 1}}\ar@{->}[rd] & \\  
{2} &  & {\red{\bf 3}} & & {\red{\bf 2}} & & {\blue{\bf 0}} & & {\blue{\bf 3}}}$}
\end{equation*}
has maximal value $||{\bf c}||_{{\bf p}_{1}} + ||{\bf c}||_{{\bf p}_{2}} = 25$.
By Theorem \ref{thm:shape}, we have
\begin{equation*}
\lambda_1({\bf c}) = 19, \ \ \lambda_1({\bf c})+\lambda_3({\bf c})= 25, \ \ \lambda_1({\bf c})+\lambda_3({\bf c}) + \lambda_5({\bf c}) = 27,
\end{equation*} 
which implies $\lambda_3({\bf c}) = 6$, $\lambda_5({\bf c}) = 2$, 
and hence $\la(\bc)=(19,19,6,6,2,2)$.}
\end{ex}

\begin{rem}\label{rem:Greene}
{\rm Suppose that $\g$ is of type $A_n$ and $\mf l$ is of type $A_{r}\times A_{s}$ with $r+s=n-1$. The associated crystal $B(U_q(\mf u^-))$ can be realized as the set of $(r+1)\times (s+1)$ non-negative integral matrices (see \cite[Section 4.3]{K18}). For $M\in B(U_q(\mf u^-))$, let $\la=(\la_1,\la_2,\ldots)$ be the shape of the tableaux corresponding to $M$ under RSK. It is a well-known result due to Greene \cite{G} (cf. \cite{Ful}) that $\la_1+\cdots+\la_l$ is a maximal sum of entries in $M$ lying on mutually non-intersecting $l$ lattice paths on $(r+1)\times (s+1)$ array of points from northeast to southwest. A similar result when $\g$ is of type $B$, $C$ is obtained by folding crystals of type $A$ with $r=s$.
Hence, Theorem \ref{thm:shape} is a non-trivial generalization of \cite{G}  to the case of type $D$. We can also recover the result in \cite{G} by using the same argument as in Section \ref{pf:shape}.
}
\end{rem}

\begin{rem}
{\rm 
There is another reduced expression $\bi\in R(w_0)$ which gives a nice combinatorial description of $\B_{\bi}$ \cite{SST16}.
But we do not know whether we may obtain results similar to the ones in this paper using this reduced expression.
}
\end{rem}

\section{Proofs of Main Theorems}\label{sec:proof}

\subsection{Formula of Berenstein-Zelevinsky}  
Let us recall results on combinatorial formula for string parametrization of $B(\infty)$ \cite{BZ-1}, which play a crucial role in proving Theorems \ref{thm:epsilon^*_n} and  \ref{thm:shape}.


Let $\mathfrak{g}$ be a symmetrizable Kac-Moody algebra. We keep the notations in Section \ref{sec:crystal}.
For $i \in I$, let $B_i= \{\, (x)_i \,|\, x \in \mathbb{Z} \,\}$ 
be the abstract crystal given by 
${\rm wt}((x)_i) = x\alpha_i$, 
$\varepsilon_i((x)_i)=-x$, 
$\varphi_i((x)_i)=x$, 
$\varepsilon_j((x)_i)=-\infty$, $\varphi_j((x)_i)=-\infty$ for $j \neq i$ and 
$\tilde{e}_i(x)_i = (x+1)_i$, 
$\tilde{f}_i(x)_i = (x-1)_i$, 
$\tilde{e}_j(x)_i = \tilde{f}_j(x)_i = 0$ for $j \neq i$. 
It is well-known that for any $i \in I$, there is a unique embedding of crystals \cite{Kas93}
\begin{equation*}
	\Psi_i : B(\infty) \hookrightarrow B(\infty) \otimes B_i
\end{equation*} sending $b_{\infty} \mapsto b_{\infty} \otimes (0)_i$,
where $b_\infty$ is the highest weight element in $B(\infty)$. 
This embedding satisfies that for $b \in B(\infty)$, 
$\Psi_i(b) = b' \otimes (-a)_i$, where 
$a = \varepsilon_i(b^*)$ and 
$b' = \left( \tilde{e}_i^a(b^*) \right)^*$. 
Given $b\in B(\infty)$ and a sequence of indices $\bi = (i_1, \cdots, i_l)$ in $I$, consider the sequence $b_k \in B(\infty)$ and $a_k \in \mathbb{Z}_{+}$ for $1\leq k \leq l-1$ defined inductively by
\begin{equation*}
b_0 = b, \ \ \Psi_{i_k}(b_{k-1}) = b_k \otimes (-a_k)_{i_k}.
\end{equation*}
The sequence $t_{\bi}(b) = (a_l, \cdots, a_1)$ is called the {\em string of $b$ in direction $\bi$}. 
By construction, it can be reformulated by 
\begin{equation} \label{eq:epsilon}
a_k = \varepsilon_{i_k}(\tilde{e}_{i_{k-1}}^{a_{k-1}}\cdots \tilde{e}_{i_1}^{a_1}b^*),
\end{equation} 
for $2\leq k\leq l$, where $a_1 = \varepsilon_{i_1}(b^*)$.

Suppose that $\mathfrak{g}$ is of finite type. 
Let $V$ be a finite-dimensional $\mathfrak{g}$-module and $V_\la$ denote the weight space of $V$ for $\lambda \in {\rm wt}(V)$, where ${\rm wt}(V)$ is the set of weights of $V$. 
 
For $\lambda, \mu \in {\rm wt}(V)$, an {\em $\bi$-trail from $\lambda$ to $\mu$ in $V$} is a sequence of weights $\pi = (\lambda = \nu_0, \nu_1, \ldots, \nu_l = \mu)$ in ${\rm wt}(V)$ such that 

\begin{itemize}
	\item[(1)] for $1\leq k \leq l$, $\nu_{k-1} - \nu_{k} = d_k(\pi)\alpha_{i_k}$ for some  $d_k(\pi)\in\Z_+$,
	\item[(2)] $e_{i_1}^{d_1(\pi)} \cdots e_{i_l}^{d_l(\pi)}$ is a non-zero linear map from $V_{\mu}$ to $V_{\la}$.
\end{itemize} 
When $V$ is a module with a {\em minuscule} highest weight, then the condition (1) implies (2). Furthermore, if $B$ is a crystal of $V$, then we have 
$\te_{i_1}^{d_1(\pi)} \cdots \te_{i_l}^{d_l(\pi)}B_\mu = B_\la$ or 
$\tf_{i_l}^{d_l(\pi)} \cdots \tf_{i_1}^{d_1(\pi)}B_\la = B_\mu$. 


Let $\bi = (i_1, \cdots, i_N) \in R(w_0)$ given.
Let $\bi^* := (i_1^*, \cdots, i_N^*)$ and $\bi^{\rm op} := (i_N, \cdots, i_1)$, where $i \mapsto i^*$ is the involution on $I$ given by $w_0(\alpha_i)=-\alpha_{i^*}$. For $\bc\in \B_{\bi}$, 
we have by \cite[Proposition 3.3]{BZ-1} 
\begin{equation} \label{eq:star}
b_{\bi}(\textbf{c})^{*} = b_{{\bi^*}^{\rm op}}(\textbf{c}^{\rm op}),
\end{equation}
where $\bc^{\rm op} = (c^{\rm op}_k)$ is given by $c^{\rm op}_k=c_{N-k}$ for $\bc=(c_k)$.


\begin{thm} [\cite{BZ-1}, Theorem 3.7] \label{thm:BZ} 
For $\bi, \bi' \in R(w_0)$ and $\bc\in \B_{\bi}$,
let $\textbf{t} = t_{\bi}(b_{\bi'}(\textbf{c})^{\ast})$. 
Then $\textbf{t} = (t_k)$ and $\textbf{c} = (c_m)$ are related as follows : for any $k = 1, \cdots, N$ 
\begin{equation} \label{eq:BZ_formula1}
	t_k = \min_{\pi_1} \left\{ \sum_{m=1}^{N} d_m(\pi_1)c_m \right\} 
	- \min_{\pi_2} \left\{ \sum_{m=1}^{N} d_m(\pi_2)c_m \right\},
\end{equation}
where $\pi_1$ (resp. $\pi_2$) runs over $\bi'$-trails from $s_{i_1}\cdots s_{i_{k-1}}\varpi_{i_k}$ (resp. from $s_{i_1} \cdots s_{i_k}\varpi_{i_k}$) to $w_0 \varpi_{i_k}$ in the fundamental representation $V({\varpi_{i_k}})$.
\end{thm}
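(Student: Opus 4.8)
The plan is to reduce the statement to a tropical evaluation of generalized minors and to identify the resulting monomials with trails. First I would unwind the definition of the string: by the recursion \eqref{eq:epsilon}, the coordinates of $\mathbf{t}=t_{\bi}(b_{\bi'}(\bc)^{\ast})$ are obtained inductively as $t_k=\varepsilon_{i_k}\!\left(\te_{i_{k-1}}^{t_{k-1}}\cdots \te_{i_1}^{t_1}\,b_{\bi'}(\bc)^{\ast}\right)$, with $t_1=\varepsilon_{i_1}(b_{\bi'}(\bc)^{\ast})$. By \eqref{eq:star} the element $b_{\bi'}(\bc)^{\ast}$ is again a PBW vector whose datum is $\bc^{\mathrm{op}}$ in a reversed direction, so the whole computation reduces to evaluating the functions $\varepsilon_{i_k}$ on canonical (PBW) vectors whose $\bi'$-Lusztig datum is an explicit relabelling of $\bc=(c_m)$. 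The point to prove is that each such $\varepsilon_{i_k}$, and hence each $t_k$, is the tropicalization of a ratio of two generalized minors in the fundamental representation $V(\varpi_{i_k})$.

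Second, I would pass to the totally positive geometric model, where Lusztig's parametrization of the canonical basis and the string parametrization become two systems of coordinates on the unipotent group, related by a subtraction-free birational transition map (\cite{Lu90,BZ-1}); the $\bi'$-factorization coordinates tropicalize to $\bc$. Writing $\bi'=(i'_1,\dots,i'_N)$, I would expand the matrix coefficient $\big\langle e_{i'_1}^{\,d_1}\cdots e_{i'_N}^{\,d_N}\,v_{w_0\varpi_{i_k}},\,v_\gamma\big\rangle$ of $V(\varpi_{i_k})$ in these coordinates, where $\gamma$ is the extremal weight $s_{i_1}\cdots s_{i_{k-1}}\varpi_{i_k}$ (resp. $s_{i_1}\cdots s_{i_k}\varpi_{i_k}$) singled out by the prefix of $\bi$. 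The expansion is a sum of monomials with non-negative coefficients, and the content of this step is to show that the monomials are indexed exactly by the $\bi'$-trails $\pi$ from $\gamma$ to $w_0\varpi_{i_k}$, with the exponent of the $m$-th coordinate equal to $d_m(\pi)$; the two trail axioms are precisely the weight additivity~(1) and the non-vanishing~(2) of the operator $e_{i'_1}^{\,d_1(\pi)}\cdots e_{i'_N}^{\,d_N(\pi)}$, and for $V(\varpi_{i_k})$ minuscule axiom~(1) already forces~(2).

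Third, I would tropicalize: replacing products by sums and sums by minima sends $\sum_{\pi}\prod_m x_m^{d_m(\pi)}$ to $\min_{\pi}\sum_m d_m(\pi)\,c_m$, so the minor attached to $\gamma=s_{i_1}\cdots s_{i_{k-1}}\varpi_{i_k}$ tropicalizes to the first minimum in \eqref{eq:BZ_formula1} and the one attached to $\gamma=s_{i_1}\cdots s_{i_k}\varpi_{i_k}$ to the second. Since $\varepsilon_{i_k}$ is the tropical ratio of these two minors, its value is the difference of the two minima, which is exactly $t_k$; collecting the identities over $k=1,\dots,N$ gives the theorem.

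The main obstacle will be the second step. Concretely I expect two difficulties: (i) verifying that the factorization expansion of the matrix coefficient has no cancellation, so that tropicalization commutes with the summation and the minimum is genuinely attained on a trail --- this is where Lusztig positivity of the canonical basis is needed; and (ii) matching the source weights produced by the crystal recursion (through the operators $\te_{i_j}$ and the $\ast$-twist of \eqref{eq:star}) with the two prescribed extremal weights $s_{i_1}\cdots s_{i_{k-1}}\varpi_{i_k}$ and $s_{i_1}\cdots s_{i_k}\varpi_{i_k}$. Once this dictionary between the $\varepsilon_{i_k}$--$\te_{i_j}$ recursion and the quotient of generalized minors is pinned down, the remaining trail combinatorics and the tropicalization are routine.
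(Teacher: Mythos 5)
This theorem is imported verbatim from \cite{BZ-1} (Theorem 3.7); the paper supplies no proof of its own, so there is no internal argument to compare against. Your outline faithfully reconstructs the strategy of the original Berenstein--Zelevinsky proof: realize the string coordinates in direction $\bi$ as tropicalizations of ratios of generalized minors attached to the extremal weights $s_{i_1}\cdots s_{i_{k-1}}\varpi_{i_k}$ and $s_{i_1}\cdots s_{i_k}\varpi_{i_k}$, expand each minor in the Lusztig factorization coordinates of direction $\bi'$ as a positive (cancellation-free) sum of monomials indexed by $\bi'$-trails with exponents $d_m(\pi)$, and then tropicalize. The two obstacles you single out --- positivity of the trail expansion and the dictionary between the $\varepsilon_{i_k}$-recursion for strings and quotients of generalized minors --- are precisely the technical core of \cite{BZ-1}, so as written your text is a correct plan rather than a complete proof; but it is the right plan, and it matches the route taken in the cited source.
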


\begin{rem}
{\rm The string parametrization of $b\in B(\infty)$ given by \eqref{eq:epsilon} is the string parametrization of $b^\ast$ in \cite{BZ-2} (see also \cite[Remark in Section 2]{NZ}).}
\end{rem}



\subsection{Proof of Theorem \ref{thm:epsilon^*_n}} \label{pf:epsilon} 
From now on we assume that $\mathfrak{g}$ is of type $D_n$ $(n \ge 4)$ and let $\bi_0=(i_1,\ldots,i_N) \in R(w_0)$ given in \eqref{eq:rx} with $\bi^J=(i_1,\ldots,i_M)$ and $\bi_J=(i_{M+1},\ldots,i_N)$. 
We have $n^\ast = n-1$ (resp. $(n-1)^\ast =n$) when $n$ is odd, and $i^\ast = i$ otherwise. 
Put
\begin{equation} \label{eq:expression j_0}
\bj_0=(j_1,\ldots,j_N):=\bi_0^{*{\rm op}}=(i^*_N,\ldots,i^*_1).
\end{equation} 
Recall that the crystal $B(\varpi_n)$ of $V(\varpi_n)$ can be realized as
$$B(\varpi_n)=\{\,\tau=(\tau_1,\ldots,\tau_n)\,|\,\tau_k=\pm \ (1\le k\le n) \,\},$$
where ${\rm wt}(\tau)=\tfrac{1}{2}\sum_{k=1}^n \tau_k\epsilon_k$ and
\begin{equation}\label{eq:spin crystal}
\begin{split}
( \ldots \ldots, \underbrace{+,+}_{\tau_{n-1},\tau_n})\ &\stackrel{\tf_n}{\longrightarrow}\ (\ldots\ldots,-,-),\quad 
(\ldots,\underbrace{+,-}_{\tau_i,\tau_{i+1}},\ldots) \ \stackrel{\tf_i}{\longrightarrow}\ (\ldots,-,+,\ldots) ,
\end{split}
\end{equation}
$(1\leq i\leq n-1)$ with the highest weight element $(+,\ldots,+)$ \cite{KN}.
Since the spin representation $V({\varpi_n})$ is minuscule, 
any $\bi_0$-trail $\pi=(\nu_0,\ldots,\nu_N)$ in $V(\varpi_n)$ can be identified with a sequence $b_0,\ldots, b_N$ in $B(\varpi_n)$ such that ${\rm wt}(b_k)=\nu_k$ and $\tf_{i_k}^{d_k(\pi)}b_{k-1}=b_{k}$ with $d_k(\pi)=0,1$ for $1\leq k\leq N$.

\begin{lem}\label{lem:trail in levi}
There exists a unique $(i^*_N,\ldots,i^*_{M+1})=(j_1,\ldots,j_M)$-trail
from $\varpi_n-\alpha_n={\rm wt}(+,\ldots,+,-,-)$ to $\varpi_n+\alpha_0={\rm wt}(-,-,+\ldots,+)$. We denote this trail by $(\td{\nu}_0, \ldots , \td{\nu}_M)$.
\end{lem}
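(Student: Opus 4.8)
The plan is to use that $V(\varpi_n)$ is minuscule to turn the trail condition into a statement about paths in the crystal $B(\varpi_n)$, and then to track the motion of the two $-$ signs in the sign realization \eqref{eq:spin crystal}. Since the highest weight $\varpi_n$ is minuscule, condition (1) in the definition of an $\bi$-trail implies condition (2), and, as recorded just before the lemma, a $(j_1,\ldots,j_M)$-trail $\pi=(\nu_0,\ldots,\nu_M)$ is the same datum as a sequence $b_0,\ldots,b_M\in B(\varpi_n)$ with ${\rm wt}(b_k)=\nu_k$ and $b_k=\tf_{j_k}^{d_k}b_{k-1}$, $d_k\in\{0,1\}$. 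So I must produce exactly one such sequence with $b_0=(+,\ldots,+,-,-)$ and $b_M=(-,-,+,\ldots,+)$, where $(j_1,\ldots,j_M)=(i_N^\ast,\ldots,i_{M+1}^\ast)$.

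First I would record two structural facts. Let $W_J=\langle\,s_i\mid i\in J\,\rangle$ with longest element $w_J$, which acts by $\epsilon_i\mapsto\epsilon_{n+1-i}$. A direct computation gives $w_J(\varpi_n-\alpha_n)=\varpi_n+\alpha_0$, so the two endpoints lie in one $W_J$-orbit, hence in a single $\mathfrak{l}$-component of $B(\varpi_n)$; and $(j_1,\ldots,j_M)$ is a reduced word for $w_J$ with all letters in $J$ (for $n$ odd one uses $n^\ast=n-1$ to see that the letter $n$ occurring in $\bi_J$ turns into $n-1$ after $\ast$). Computing $\langle\varpi_n-\alpha_n,\alpha_i^\vee\rangle=\delta_{i,n-2}$ for $i\in J$ shows that $\lambda:=\varpi_n-\alpha_n$ is $\mathfrak{l}$-dominant, so $b_0$ is the $\mathfrak{l}$-highest weight vector of its $\mathfrak{l}$-component and $b_M$ (of weight $w_J\lambda$) is the $\mathfrak{l}$-lowest weight vector of the same component. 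Thus the lemma amounts to the existence and uniqueness of a path from the $\mathfrak{l}$-highest to the $\mathfrak{l}$-lowest weight vector along the fixed reduced word $(j_1,\ldots,j_M)$.

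For existence I would build the trail explicitly. By \eqref{eq:spin crystal} each $\tf_i$ with $i<n$ replaces an adjacent $(+,-)$ at positions $(i,i+1)$ by $(-,+)$, i.e.\ slides one $-$ sign one step to the left, so I can exhibit $b_0,\ldots,b_M$ as a chain of sign patterns, each carrying exactly two $-$ signs, whose positions migrate from $\{n-1,n\}$ down to $\{1,2\}$; this chain simultaneously defines the weights $\td\nu_0,\ldots,\td\nu_M$. For uniqueness, I would first note that $\lambda-w_J\lambda=\sum_{i\in J}c_i\alpha_i$ forces the index-wise totals $\sum_{k:\,j_k=i}d_k=c_i$ to be fixed. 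It then remains to show each individual $d_k$ is forced: whenever a greedy move is available it must be taken, because postponing it would, by reducedness of $(j_1,\ldots,j_M)$, destroy the $(+,-)$ adjacency required by a later obligatory move. This is precisely what one sees in rank four, where $(j_1,\ldots,j_6)=(3,2,3,1,2,3)$ admits only $d=(0,1,1,1,1,0)$.

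I expect uniqueness to be the main obstacle: existence is a bookkeeping construction, whereas excluding every alternative distribution of the $d_k$ requires controlling how the two moving $-$ signs interact against the specific reduced word. The cleanest rigorous route is to reduce to the minuscule $\mathfrak{l}\cong A_{n-1}$-crystal carried by the chosen $\mathfrak{l}$-component (the $2$-subset, equivalently $(n-2)$-subset, crystal), in which $b_0$ and $b_M$ are the unique $\mathfrak{l}$-highest and $\mathfrak{l}$-lowest weight vectors, and to invoke that in a minuscule crystal the extremal path realizing the longest element along a fixed reduced word is unique; alternatively one can argue by induction on $M$, peeling off $j_1$ and combining the applicability constraint with the fixed per-index totals. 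Either way the explicit sign-pattern description from the existence step makes the forcing transparent and yields the asserted unique trail $(\td\nu_0,\ldots,\td\nu_M)$.
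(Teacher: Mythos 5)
Your proposal is correct and follows essentially the same route as the paper: both reduce the statement, via minusculity of $V(\varpi_n)$, to tracking the two $-$ signs in the spin crystal $B(\varpi_n)$ as they migrate from positions $\{n-1,n\}$ to $\{1,2\}$, with existence by explicit construction and uniqueness forced by the structure of the reduced word $(j_1,\ldots,j_M)$. The paper settles the uniqueness step by observing that, up to $2$-term braid moves, the only index sequence achieving $\tf_{j'_{2n-4}}\cdots\tf_{j'_1}(+,\ldots,+,-,-)=(-,-,+,\ldots,+)$ is $(n-2,n-1,n-3,n-2,\ldots,1,2)$ and that $\bi_J$ contains exactly one such subsequence, which is the same forcing phenomenon you describe (your reformulation via the minuscule $2$-subset crystal of $\mathfrak{l}\cong A_{n-1}$ and the reduced word for $w_J$ is a harmless repackaging, handled at a comparable level of detail to the paper's own argument).
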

\pf Considering the crystal structure on $B(\varpi_n)$ \eqref{eq:spin crystal}, 
we see that  up to 2-term braid move 
$(j'_1,j'_2,\ldots, j'_{2n-4})=(n-2,n-1,n-3,n-2,\ldots,1,2)$ is the unique sequence of indices in $I$ such that $$\tf_{j'_{2n-4}}\cdots\tf_{j'_2}\tf_{j'_1}(+,\ldots,+,-,-)=(-,-,+\ldots,+).$$
On the other hand, there exists a subsequence 
$(j'_1,j'_2,\ldots, j'_{2n-4})$ of $(j_1,\ldots,j_N)$. 
Since no other subsequence gives $(n-2,n-1,n-3,n-2,\ldots,1,2)$ up to 2-braid move
by definition of $\bi_J$, $(j'_1,j'_2,\ldots, j'_{2n-4})$ determines a unique such trail. \qed

Let ${\mc T}$ be the set of $\bj_0$-trails $\pi$ from $s_n\varpi_n$ to $w_0\varpi_n$ in $V(\varpi_n)$. 
For ${\bf c}=(c_k)\in \B^J$ and $\pi\in\mc T$,
let 
\begin{equation}
||\bc||_{\pi} = (1-d_{N}(\pi))c_1 + \cdots + (1-d_{M+1}(\pi))c_M= \sum_{k=1}^{M} \left( 1 - d_{N-k+1}(\pi)\right)c_k.
\end{equation}
Recall that $c_k=c_{\beta_k}$ for $\beta_k\in \Phi^+(J)$ ($1\leq k\leq M$) with respect to the order \eqref{eq:convex} and hence \eqref{eq:convex order i_0}. Let us simply write ${\bf c}=(c_1,\ldots,c_M)$.
The following lemma plays a crucial role in the proof of Theorem \ref{thm:epsilon^*_n}.

\begin{lem}\label{lem:max}
For ${\bf c}\in \B^J$, we have
$\varepsilon_n^*(\bc)  
= \max \left\{\, ||\bc||_{\pi}\,|\, \pi\in{\mc T}\, \right\}$. 
\end{lem}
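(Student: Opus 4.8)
The plan is to read off $\varepsilon_n^*(\bc)$ as the first entry of a string parametrization and then feed it into the Berenstein--Zelevinsky formula (Theorem \ref{thm:BZ}). Fix any $\bi=(i_1,\ldots,i_N)\in R(w_0)$ with $i_1=n$. By \eqref{eq:epsilon} the first entry of the string $t_{\bi}(b_{\bi_0}(\bc))$ is $\varepsilon_{n}(b_{\bi_0}(\bc)^\ast)=\varepsilon_n^*(\bc)$, so it suffices to compute that entry. To match the form in Theorem \ref{thm:BZ}, I use \eqref{eq:star} to write $b_{\bi_0}(\bc)=b_{\bj_0}(\bc^{\rm op})^\ast$, where $\bj_0=\bi_0^{*{\rm op}}$ as in \eqref{eq:expression j_0}. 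Applying Theorem \ref{thm:BZ} with trail direction $\bj_0$, datum $\bc^{\rm op}$, and $k=1$ gives
\begin{equation*}
\varepsilon_n^*(\bc)=\min_{\pi_1}\Big\{\sum_{m} d_m(\pi_1)\,c^{\rm op}_m\Big\}-\min_{\pi_2}\Big\{\sum_{m} d_m(\pi_2)\,c^{\rm op}_m\Big\},
\end{equation*}
where $\pi_1$ runs over $\bj_0$-trails from $\varpi_n$ to $w_0\varpi_n$ and $\pi_2$ over $\bj_0$-trails from $s_n\varpi_n$ to $w_0\varpi_n$; the latter set is exactly $\mc T$.

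Next I would rewrite the two sums. Since $\bc\in\B^J$ we have $c_k=0$ for $k>M$, hence $c^{\rm op}_m\neq 0$ only for $m>M$, and reindexing by $k=N+1-m$ yields $\sum_m d_m(\pi)\,c^{\rm op}_m=\sum_{k=1}^M d_{N-k+1}(\pi)\,c_k=:S(\pi)$. Thus the second minimum is $\min_{\pi\in\mc T}S(\pi)$, and writing $|\bc|=\sum_{k=1}^M c_k$ we get
\begin{equation*}
|\bc|-\min_{\pi\in\mc T}S(\pi)=\max_{\pi\in\mc T}\sum_{k=1}^M\big(1-d_{N-k+1}(\pi)\big)c_k=\max_{\pi\in\mc T}||\bc||_{\pi}.
\end{equation*}
Comparing with the Berenstein--Zelevinsky expression, the lemma follows once I show $\min_{\pi_1}S(\pi_1)=|\bc|$.

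The heart of the argument is this last identity. Since $V(\varpi_n)$ is minuscule, a $\bj_0$-trail is a monotone path $\varpi_n=(+,\ldots,+),\ldots,w_0\varpi_n$ in the spin crystal \eqref{eq:spin crystal} with $d_m\in\{0,1\}$. In $\bj_0$ the first $M$ letters $\bi_J^{*{\rm op}}$ lie in $J$ (the Levi directions) and the last $M$ letters $\bi^{J\,*{\rm op}}$ form the nilradical part. For a trail $\pi_1$ starting at the highest weight $(+,\ldots,+)$, every $\tf_j$ with $j\in J$ annihilates $(+,\ldots,+)$, so each Levi step is forced to have $d_m=0$ and the weight is still $\varpi_n$ after the first $M$ steps. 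Hence the whole drop is realized by the nilradical steps, $\sum_{m>M}d_m(\pi_1)\alpha_{j_m}=\varpi_n-w_0\varpi_n$. The key computation, which I carry out from the explicit form of $\bi^J$, is that the multiset of nilradical simple roots already sums to this weight, $\sum_{m>M}\alpha_{j_m}=\varpi_n-w_0\varpi_n$ (namely $\sum_{k=1}^n\ep_k$ for $n$ even and $\sum_{k=1}^{n-1}\ep_k$ for $n$ odd, using $w_0\varpi_n=-\varpi_{n^*}$). Therefore $\sum_{m>M}\big(1-d_m(\pi_1)\big)\alpha_{j_m}=0$ is a non-negative combination of simple roots equal to zero, forcing $d_m(\pi_1)=1$ for every nilradical $m$, i.e. $d_{N-k+1}(\pi_1)=1$ for all $1\le k\le M$. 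Since such a trail exists by connectedness of the crystal, we conclude $S(\pi_1)=|\bc|$ for every $\pi_1$, whence $\min_{\pi_1}S(\pi_1)=|\bc|$, and the lemma follows.

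I expect the main obstacle to be the bookkeeping around the two involutions $\ast$ and ${\rm op}$ — producing the correct datum $\bc^{\rm op}$ and trail direction $\bj_0$, and matching the reindexing $m\mapsto N-k+1$ to the definition of $||\bc||_{\pi}$ — together with the root-sum identity $\sum_{m>M}\alpha_{j_m}=\varpi_n-w_0\varpi_n$, which is the genuine content behind $\min_{\pi_1}S(\pi_1)=|\bc|$: it is exactly what rigidifies every trail emanating from the highest weight. Lemma \ref{lem:trail in levi} plays the analogous rigidifying role on the $\pi_2$ side, pinning down the unique Levi portion of each trail in $\mc T$, although only the nilradical portion of $\pi_2$ enters $S(\pi)$.
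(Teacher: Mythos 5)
Your proposal is correct and follows essentially the same route as the paper: identify $\varepsilon_n^*(\bc)$ with the first string entry of $b_{\bj_0}(\bc^{\rm op})^*$ in direction $\bi_0$, apply Theorem \ref{thm:BZ} with $k=1$, and observe that only the nilradical portion of each trail contributes after reindexing. The one place you go beyond the paper is the $\pi_1$-minimum: where the paper simply asserts that $\tf_{j_N}\cdots\tf_{j_{M+1}}$ applied to $(+,\ldots,+)$ gives the unique trail from $\varpi_n$ to $w_0\varpi_n$, you derive the forced values $d_m(\pi_1)=1$ ($m>M$) from the root-sum identity $\sum_{m>M}\alpha_{j_m}=\varpi_n-w_0\varpi_n$ and linear independence of simple roots, which is a correct (and slightly more explicit) justification of the same fact.
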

\pf Let $\bc=(c_k)\in \B^J$ given. 
Since $\varepsilon_n^*(\bc)=\varepsilon_n^*(b_{\bi_0}(\bc))$,
we have $\varepsilon_n^*(\bc)=t_1$, where 
$(t_k) = {\bf t}_{\bi_0}(b_{\bi_0}(\bc)) = {\bf t}_{\bi_0}(b_{\bi_0}(\bc)^{**})
={\bf t}_{{\bf i}_0}(b_{\bi_0^{*\rm op}}({\bf c}^{\rm op})^{*})$ by \eqref{eq:star}.

One can check that applying $\tf_{j_N}\cdots\tf_{j_{M+1}}$ to $(+,\ldots,+)$ gives a unique $\bj_0$-trail from $\varpi_n$ and $w_0\varpi_n$.
Hence by \eqref{eq:BZ_formula1}, we have 
\begin{equation*}
t_{1} = \sum_{1\leq k\leq M}c_{k} - 
\min_{\pi}
\left\{ \sum_{1\leq k\leq M}d_{N-k+1}(\pi)c_k\right\},
\end{equation*} 
where $\pi$ is a $\bj_0$-trail from $s_n\varpi_n$ to $w_0\varpi_n$. 
Hence $t_1=\max \left\{\, ||\bc||_{\pi}\,|\, \pi\in{\mc T}\, \right\}$.
\qed

For $\pi=(\nu_0,\ldots,\nu_N) \in {\mc T}$, let 
$\pi_J = (\nu_0,\ldots,\nu_M)$, $\pi^J = (\nu_{M+1},\ldots,\nu_N)$,
and
$${\mc T}'=\{\,\pi\,|\,\pi_J=(\td{\nu}_0, \ldots , \td{\nu}_M)\,\}\subset {\mc T},$$ where 
$(\td{\nu}_0, \ldots , \td{\nu}_M)$ as in Lemma \ref{lem:trail in levi}.

\begin{lem} \label{lem:T_trail}
For ${\bf c}\in \B^J$, we have
$\varepsilon_n^*(\bc)  
= \max \left\{\, ||\bc||_{\pi}\,|\, \pi\in{\mc T}'\, \right\}$. 
\end{lem}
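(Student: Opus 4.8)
The plan is to build directly on Lemma \ref{lem:max}, which already identifies $\varepsilon_n^*(\bc)$ with $\max\{\,||\bc||_\pi\,|\,\pi\in\mc T\,\}$, and to show that this maximum is already attained on the smaller set $\mc T'$. Since $\mc T'\subseteq\mc T$, the inequality $\max_{\pi\in\mc T'}||\bc||_\pi\le\varepsilon_n^*(\bc)$ is automatic, so the entire content is the reverse: given a maximizing $\pi\in\mc T$, I must produce $\pi'\in\mc T'$ with $||\bc||_{\pi'}\ge||\bc||_\pi$. It is convenient to rewrite $||\bc||_\pi=\sum_{k=1}^M c_k-\Theta(\pi)$, where $\Theta(\pi)=\sum_{k=1}^M d_{N-k+1}(\pi)\,c_k$ is the $\bc$-weighted count of active steps occurring in the co-Levi part $\pi^J$; maximizing $||\bc||_\pi$ is then the same as minimizing $\Theta(\pi)$.

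Two structural facts should drive the argument. First, in the spin crystal $B(\varpi_n)$ the base point of weight $s_n\varpi_n={\rm wt}(+,\dots,+,-,-)$ is an $\mf l$-highest weight vector (no $(-,+)$ pattern occurs, so no $\te_i$ with $i\in J$ acts), while the endpoint $\td{\nu}_M=\varpi_n+\alpha_0={\rm wt}(-,-,+,\dots,+)$ of the unique Levi trail $(\td{\nu}_0,\dots,\td{\nu}_M)$ of Lemma \ref{lem:trail in levi} is exactly the $\mf l$-lowest weight vector of the same $\mf l$-component. Thus the trail defining $\mc T'$ realizes the maximal possible amount of Levi-coloured activity. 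Second, because $\bc\in\B^J$ is supported on $\Phi^+(J)$ while the roots $\beta_{M+1},\dots,\beta_N$ attached to the Levi steps lie in $\Phi^+_J$, those steps carry $\bc$-weight zero; hence $\Theta(\pi)$ is unaffected if we sum over all $N$ steps. On the other hand the total activity $\sum_{m=1}^N d_m(\pi)\alpha_{j_m}=s_n\varpi_n-w_0\varpi_n$ is the same for every $\pi\in\mc T$, so the number of active steps of each fixed colour is constant over $\mc T$. Consequently any rerouting that transfers active steps from the co-Levi half into the weight-zero Levi half can only decrease $\Theta$.

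The heart of the proof is then to show that starting the co-Levi part from the $\mf l$-lowest midpoint $\td{\nu}_M$ never forces a larger $\Theta$ than starting it from any other reachable midpoint $\nu_M$. I would establish this by an explicit exchange argument inside $B(\varpi_n)$: given $\pi\in\mc T$ whose Levi part stops short of $\td{\nu}_M$, I use the minuscule structure (each $d_m\in\{0,1\}$, and condition (1) forcing (2) in the definition of a trail) together with the explicit word $\bi^J=\bi_1\cdots\bi_{n-1}$ to commute an active Levi-coloured step out of $\pi^J$ and into $\pi_J$, keeping the endpoint $w_0\varpi_n$ fixed and not increasing $\Theta$. Iterating drives the Levi part all the way to $\td{\nu}_M$, i.e. into $\mc T'$, which yields the reverse inequality and hence the claim.

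The main obstacle is precisely this last exchange step, because $\Theta$ is weighted by the position-dependent multiplicities $c_{\beta_k}$ rather than by colour alone: moving an active step across the cut at level $M$ changes which root $\beta_k$ is hit, so I must check that the rerouting dictated by the commutation relations underlying $\bi_0$ replaces each removed co-Levi root by a Levi root (weight zero) without creating new co-Levi activity of strictly larger $\bc$-weight. I expect this to be controlled by the convexity of the order \eqref{eq:convex order i_0} and the minuscule property, and it is the same mechanism that will ultimately match these co-Levi trails with the double paths in Theorem \ref{thm:epsilon^*_n}.
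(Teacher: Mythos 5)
Your reduction of the problem is sound: the inclusion $\mc T'\subseteq\mc T$ gives one inequality for free via Lemma \ref{lem:max}, the identity $||\bc||_\pi=\sum_k c_k-\Theta(\pi)$ with $\Theta(\pi)=\sum_{k=1}^{M}d_{N-k+1}(\pi)c_k$ is correct, and so is the observation that the Levi-half steps of a $\bj_0$-trail carry $\bc$-weight zero because $\bc$ is supported on $\Phi^+(J)$. But the proof has a genuine gap exactly where you flag it: you never establish that passing from an arbitrary $\pi\in\mc T$ to some $\pi'\in\mc T'$ does not increase $\Theta$. Your proposed mechanism --- commuting individual Levi-coloured active steps across the cut at level $M$ and hoping that convexity of the order \eqref{eq:convex order i_0} controls the position-dependent weights $c_{\beta_k}$ --- is not carried out, and colour-counting alone (the fact that the multiset of colours of active steps is constant on $\mc T$) cannot close it, precisely because $\Theta$ weights active steps by \emph{position}, not colour: a rerouted co-Levi step could land on a root with a larger multiplicity $c_\beta$.

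The paper's argument dissolves this obstacle by a deletion rather than a commutation. Given $\pi\notin\mc T'$ with midpoint $\nu_M={\rm wt}(\tau)$, where $\tau$ has its two $-$'s at positions $(p,q)\neq(1,2)$, one looks at the active steps $(j'_1,\ldots,j'_{N'})$ of the co-Levi half and simply \emph{discards} those $\tf_{j'_k}$ whose effect in $B(\varpi_n)$ is to shuffle the two pre-existing $-$'s of $\tau$ leftward; the surviving subsequence $(j''_1,\ldots,j''_{N''})$ sends $(+,\ldots,+)$ to $(+,+,-,\ldots,-)$ and hence defines the co-Levi half of a trail $\pi'\in\mc T'$ whose Levi half is the unique trail of Lemma \ref{lem:trail in levi}. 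The point is that the active positions of $\pi'$ in $[M+1,N]$ form a \emph{subset of the same positions} active in $\pi$, so $d_m(\pi')\le d_m(\pi)$ pointwise on the co-Levi range and $\Theta(\pi')\le\Theta(\pi)$ for every nonnegative $\bc$ --- no comparison of different multiplicities $c_{\beta_k}$, and no convexity, is ever needed. To repair your proof you would need either to supply this deletion construction or to prove the exchange step you postulate; as written, the central inequality is asserted but not demonstrated.
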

\pf For simplicity, we assume that $n$ is even so that $w_0\varpi_n=-\varpi_n$. The proof for odd $n$ is almost identical.\
Let $\pi=(\nu_0,\ldots,\nu_N)\in {\mc T}$ given. 
It suffices to show that there exists $\pi'\in {\mc T}'$ such that
$||\bc||_{\pi} \le ||\bc||_{\pi'}$. 

If $\pi\in {\mc T}'$, then $\nu_{M}={\rm wt}(-,-,+\ldots,+)$ by Lemma \ref{lem:trail in levi}.   
Suppose that $\pi\not\in {\mc T}'$. Since $j_k\neq n$ for $1\leq k\leq M$, we have $\nu_{M}={\rm wt}(\tau)$ where $\tau=(\tau_1,\ldots,\tau_n)$ with $\tau_p=\tau_q=-$ for some $(p,q)\neq (1,2)$ and $\tau_i=+$ otherwise. 

Since $\pi\in \mc T$, there exists a subsequence $(j'_1,\ldots,j'_{N'})$ of $(j_{M+1},\ldots,j_N)$ such that 
\begin{equation*}
\tf_{j'_{N'}}\cdots\tf_{j'_2}\tf_{j'_1}\tau=
(-,\ldots,-),
\end{equation*}
the lowest weight element.
Ignoring $j'_k$ such that $-$'s in $\tau$ is moved to the left by $\tf_{j'_k}$ \eqref{eq:spin crystal}, we obtain a subsequence $(j''_1,\ldots,j''_{N''})$ of $(j'_1,\ldots,j'_{N'})$ such that
\begin{equation*}
\tf_{j''_{N''}}\cdots\tf_{j''_2}\tf_{j''_1}(+,\ldots,+)=
(+,+,-,\ldots,-).
\end{equation*}
This implies that there exists a unique $\pi'\in \mc T'$ such that for $M+1\leq k\leq N$
\begin{equation*}
d_k(\pi')=
\begin{cases}
1, & \text{if $k=j''_l$ for some $1\leq l\leq N''$},\\
0, & \text{otherwise}.
\end{cases}
\end{equation*}
Hence we have $||\bc||_{\pi} \le ||\bc||_{\pi'}$ by construction of $\pi'$.
\qed

Recall that ${\bf c}=(c_k)\in \B^J$ is by convention identified with the array, where $c_k$ is placed at the position of $\beta_k$ in $\Delta_n$ for $1\leq k\leq M$ (see Example \ref{ex:Delta_n}).

We note that if we consider the array $(j_k)$ for $M+1\leq k\leq N$, where $j_k$ is placed at the position of $\beta_{N-k+1}$ in $\Delta_n$, then the $r$-th row from the top is filled with $r$ for $1\leq r\leq n-2$ and the bottom row is filled with 
$\ldots, n-1,n,n-1,n$ from right to left.

Let $\mc D$ be the set of arrays, where either $0$ or $1$ is placed in each $r$-th row of $\Delta_n$ from the top ($1\leq r\leq n-1$) satisfying the following conditions;
\begin{itemize}
\item[(1)] the three entries in the first two rows are $0$,

\item[(2)] the number of $1$'s in each $r$-th row is $r-2$ for $3\leq r\leq n-1$,    

\item[(3)] if $r>3$ (resp. $r<n-1$) and there are two $1$'s in the $r$-th row 
such that the entries in the same row between them are zero, then there is exactly one $1$ in the $(r-1)$-th row (resp. $(r+1)$-th row) between them,

\item[(4)] the $j_k$'s corresponding to $1$'s in the $(n-1)$-th row are $n, n-1, n,\ldots$ from right to left.
\end{itemize}
We write ${\bf d}=(d_k)\in \mc D$, where $d_k$ denotes the entry at the position of $\beta_{N-k+1}$ in $\Delta_n$ for $M+1\leq k\leq N$.

\begin{ex}{\rm
When $n=6$,  we have
\begin{equation*}
\begin{split} 
\xymatrixcolsep{-0.4pc}\xymatrixrowsep{0.3pc}
\xymatrix{
& & & & & c_{5} & & & & \\
& & & & c_{4} & & c_{9} & & & \\
\bc=\ & & & c_{3} & & c_{8} & & c_{12} & & \\  
& & c_{2} & & c_{7} & & c_{11} & & c_{14}&\\
& c_{1} & & c_{6} & & c_{10} & & c_{13} & & \!\!c_{15}}\quad\quad
\xymatrixcolsep{-0.6pc}\xymatrixrowsep{0.1pc}
\xymatrix{
& & & & & d_{26} & & & & \\
& & & & d_{27} & & d_{22} & & & \\
{\bf d}=& & & d_{28} & & d_{23} & & d_{19} & & \\  
& & d_{29} & & d_{24} & & d_{20} & & d_{17}&\\
& d_{30} & & d_{25} & & d_{21} & & d_{18} & & d_{16}}
\end{split}
\end{equation*}
\begin{equation*}
\begin{split} 
\xymatrixcolsep{-0.6pc}\xymatrixrowsep{0.1pc}
\xymatrix{
& & & & j_{26} & & & & \\
& & & j_{27} & & j_{22} & & & \\
& & j_{28} & & j_{23} & & j_{19} & & \\  
& j_{29} & & j_{24} & & j_{20} & & j_{17}&\\
j_{30} & & j_{25} & & j_{21} & & j_{18} & & j_{16}} \quad
\xymatrixcolsep{-0.6pc}\xymatrixrowsep{0.1pc}
\xymatrix{
\\
\\
\\  
\\
=} \quad
\xymatrixcolsep{0.15pc}\xymatrixrowsep{0.35pc}
\xymatrix{
& & & & 1 & & & & \\
& & & 2 & & 2 & & \\
& & 3 & & 3 & & 3 & \\  
& 4 & & 4 & & 4 & & 4 \\
6 & & 5 & & 6 & & 5 & & 6}
\end{split}
\end{equation*}
}
\end{ex}

For $\pi\in \mc{T}'$, let ${\bf d}(\pi)$ denote the array $(d_k(\pi))$, where $d_k(\pi)$ is placed in the position of $\beta_{N-k+1}$ in $\Delta_n$ for $M+1\leq k\leq N$.

\begin{lem}\label{lem:bijection from T to D}
The map sending $\pi$ to ${\bf d}(\pi)$ is a bijection from $\mc T'$ to $\mc D$.
\end{lem}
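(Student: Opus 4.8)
The plan is to read a trail $\pi\in\mc T'$ as a lattice path in the minuscule spin crystal $B(\varpi_n)$ and to match its local moves, box by box, against the four defining conditions of $\mc D$, processing the boxes of $\Delta_n$ in the order dictated by $\bj_0$.

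First, since $V(\varpi_n)$ is minuscule, a $\bj_0$-trail $\pi=(\nu_0,\ldots,\nu_N)$ is the same datum as a sequence $b_0,\ldots,b_N$ of spin vectors with $\tf_{j_k}^{d_k(\pi)}b_{k-1}=b_k$ and $d_k(\pi)\in\{0,1\}$; I identify a spin vector $(\tau_1,\ldots,\tau_n)$ with the set $S=\{\,p\,|\,\tau_p=-\,\}$ of its minus positions. Because $\pi\in\mc T'$ the first half $b_0,\ldots,b_M$ is the fixed trail of Lemma \ref{lem:trail in levi}, so $b_M$ corresponds to $S=\{1,2\}$, and $\pi$ is recorded precisely by $(d_{M+1}(\pi),\ldots,d_N(\pi))=\mathbf{d}(\pi)$. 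In particular $\pi\mapsto\mathbf{d}(\pi)$ is injective, and it remains to identify its image with $\mc D$.

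Next I would invoke the explicit description of the letters on $\Delta_n$ stated just before $\mc D$: the box $\ep_i+\ep_j$, which lies in row $r=i+n-j$, carries the letter $r$ when $r\le n-2$, while the bottom row carries $n,n-1,n,\ldots$ from right to left; moreover the boxes are visited in decreasing convex order, that is, grouped by the second index $j$ from $2$ to $n$ and, within each group, by increasing first index $i$ (equivalently, right-to-left through the triangle, each box swept before the one below-left of it). Under this schedule $\tf_r$ (for $r\le n-2$) moves a $-$ from position $r+1$ to position $r$ and is applicable exactly when $r\notin S$ and $r+1\in S$, whereas $\tf_n$ creates a pair of $-$'s at $\{n-1,n\}$ and $\tf_{n-1}$ carries a $-$ from $n$ to $n-1$. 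The task is then to translate the two requirements ``each chosen $\tf_{j_k}$ is applicable'' and ``$b_N=w_0\varpi_n$'' into conditions (1)--(4).

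The weight identity $\nu_N-\nu_M=w_0\varpi_n-\varpi_n-\alpha_0$, equal to $-(\ep_3+\cdots+\ep_n)$ for $n$ even (the odd case only replaces the target by ${\rm wt}(-,\ldots,-,+)$), expands uniquely in the simple roots and thereby pins down the number of $1$'s carrying each letter; since letter $r$ occurs only in row $r$ for $r\le n-2$, this forces rows $1,2,\ldots,n-1$ to contain $0,0,1,2,\ldots,n-3$ ones respectively, which is exactly conditions (1) and (2), and it fixes the split of the bottom-row $1$'s between $\tf_{n-1}$ and $\tf_n$. The alternating applicability of $\tf_n$ and $\tf_{n-1}$ at the right end of the chain---one may fire $\tf_n$ only when $n-1,n\notin S$, and $\tf_{n-1}$ only once a $-$ has been drained from $n-1$ by some $\tf_{n-2}$ while a $-$ remains at $n$---yields condition (4). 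The heart of the argument is condition (3): I would maintain an invariant describing $S$ after each box of the sweep in terms of the $0$-boxes recorded so far, and show that ``$\tf_r$ stays applicable throughout the sweep'' is equivalent to the two $0$-boxes of consecutive rows being nested without crossing, which is precisely the interlacing demanded in (3). Reversing the simulation gives the inverse map: starting from any $\mathbf{d}\in\mc D$ the same schedule produces a sequence of applicable moves terminating at the unique vector of weight $w_0\varpi_n$, hence a trail in $\mc T'$ with $\mathbf{d}(\pi)=\mathbf{d}$, so the map is a bijection. The step I expect to be the main obstacle is this last equivalence between applicability of the $\tf_r$-moves across the whole sweep and the non-crossing condition (3); it requires a clean description of $S$ after each column and a verification that a chosen move is blocked exactly when (3) would fail.
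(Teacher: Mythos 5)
Your overall strategy is the same as the paper's: read a trail in $\mc T'$ as a walk in the minuscule spin crystal, record the positions of the $-$'s, and translate applicability of the chosen lowering operators, box by box along the sweep of $\Delta_n$, into conditions (1)--(4). Injectivity and the counting argument for conditions (1), (2) and the bottom-row alternation (4) are fine and agree with the paper. However, the two steps you yourself flag as the ``main obstacle'' --- the equivalence between applicability of the $\tf_r$-moves and the interlacing condition (3), and the reversal of the simulation for surjectivity --- are precisely where the content of the lemma lies, and your proposal only announces them without carrying them out. As written this is a plan, not a proof.

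For comparison, here is how the paper closes these gaps. For the forward direction it labels the minus signs of the target $(+,+,-,\ldots,-)$ as $-_1,-_2,\ldots$ from the left and the $1$'s in the $r$-th row of ${\bf d}(\pi)$ as $1_{(r-2,r)},\ldots,1_{(1,r)}$ from the right, and shows that the operator attached to $1_{(s,r)}$ moves the \emph{specific} sign $-_s$ from coordinate $r+1$ to coordinate $r$ (or creates the pair $(-_s,-_{s+1})$ in the last two coordinates when $r=n$ and $s$ is odd). Chasing where these boxes sit in $\Delta_n$ then forces $1_{(s,r)}$ to lie to the northeast of $1_{(s+1,r+1)}$ and to the northwest of $1_{(s,r+1)}$, which is exactly (3). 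For surjectivity, ``reversing the simulation'' is not automatic: one must show that for an \emph{arbitrary} ${\bf d}\in\mc D$ every selected move is applicable at the moment it is fired. The paper does this by a connectivity argument: any ${\bf d}\in\mc D$ is joined to the distinguished datum ${\bf d}(\pi_0)$ by elementary moves shifting a single $1$ one position to the right, each intermediate array staying in $\mc D$; consequently the word of simple reflections read off from the $1$'s of ${\bf d}$ differs from that of ${\bf d}(\pi_0)$ only by $2$-term braid moves, so the composite of the corresponding $\tf$'s still sends $(+,\ldots,+)$ to $(+,+,-,\ldots,-)$ and a trail realizing ${\bf d}$ exists. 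You need either this pair of devices or a fully worked-out version of the invariant you allude to; without one of them the argument is incomplete.
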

\pf Let us assume that $n$ is even since the proof for odd $n$ is the same. 
We first show that the map is well-defined.
Let $\pi=(\nu_0,\ldots,\nu_N)\in \mc T'$ given, where $\nu_{M}={\rm wt}(-,-,+\ldots,+)$. Let $(j'_1,\ldots,j'_L)$ be the subsequence of $(j_{M+1},\ldots, j_{N})$ such that $d_{j'_k}(\pi)=1$. Then $$\tf_{j'_{L}}\cdots\tf_{j'_2}\tf_{j'_1}(-,-,+\ldots,+)=(-,-,-,\ldots,-),$$ equivalently,
\begin{equation}\label{eq:aux-5.1}
\tf_{j'_{L}}\cdots\tf_{j'_2}\tf_{j'_1}(+,+,+\ldots,+)=(+,+,-,\ldots,-).
\end{equation} 
From \eqref{eq:spin crystal}, \eqref{eq:aux-5.1} and the array $(j_k)$ on $\Delta_n$, one can check that 
(i) $L=(n-3)(n-2)/2$, 
(ii) $3\leq j'_k\leq n$,
(iii) the array ${\bf d}(\pi)$ satisfies the conditions (1) and (2) for $\mc D$.
To verify the condition (3), let us enumerate $-$'s appearing in \eqref{eq:aux-5.1} from left to right 
by $-_1, -_2,\ldots$. 

For $3\leq r\leq n-1$, let $1_{(r-2,r)},\ldots,1_{(2,r)}, 1_{(1,r)}$ denote the entries $1$ of ${\bf d}(\pi)$ in the $r$-th row, which are enumerated from the right.

For $1\leq k\leq L$, suppose that $j'_k$ corresponds to $1_{(s,r)}$ in ${\bf d}(\pi)$ for some $s$ with $r=j'_k$.
It is not difficult to see that $\tf_{j'_k}$ in \eqref{eq:aux-5.1} corresponds to
\begin{itemize}
\item[(1)] moving $-_s$ at the $(r+1)$-th coordinate of a vector in $B(\varpi_n)$ to the $r$-th one unless $r=n$ and $s$ is odd,

\item[(2)] placing $(-_{s},-_{s+1})$ at the last two coordinates if $r=n$ and $s$ is odd.
\end{itemize}
Then by looking at the arrangement of $d_k(\pi)$'s in $\Delta_n$, it follows that $1_{(s,r)}$ is located to the northeast of $1_{(s+1,r+1)}$ and to the northwest of $1_{(s,r+1)}$ for $r<n-1$, 
\begin{equation*}
\xymatrixcolsep{-1.5pc}\xymatrixrowsep{0.5pc}\xymatrix{
& & & \cdots & & \quad\quad\quad\quad 1_{(s,r)} & \cdots & &  & \\
1_{(s+1,r+1)}\quad &  & & & & & \quad\quad\quad 1_{(s,r+1)}  }
\end{equation*}
and the $j'_k$'s corresponding to $\ldots, 1_{(3,n-1)}, 1_{(2,n-1)}, 1_{(1,n-1)}$ are $\ldots, n, n-1,n$. 
Hence ${\bf d}(\pi)$ satisfies the condition (3) and (4) for $\mc D$, and the map $\pi\mapsto {\bf d}(\pi)$ is well-defined. 

Since the map is clearly injective, it remains to show that it is surjective. 
Let $\pi_0\in {\mc T}'$ be a unique trail such that $d_k(\pi_0)_k=1$ for $M+1\leq k\leq M+L$ and $0$ otherwise. 

We claim that for any ${\bf d}\in \mc D$ there exists a sequence ${\bf d}={\bf d}_0,{\bf d}_1,\ldots,{\bf d}_m={\bf d}(\pi_0)$ in $\mc D$ such that 
${\bf d}_{l+1}$ is obtained from ${\bf d}_l$ by moving an entry $1$ to the right.  If ${\bf d}\neq {\bf d}(\pi_0)$, then choose a minimal $k$ such that $d_k=0\neq d_k(\pi_0)$ for $M+1\leq k\leq M+L$. 
If $1_{(s,r)}$ denotes an entry corresponding to $d_k(\pi_0)$ in ${\bf d}(\pi)$, then there exists $1_{(s',r)}$ in ${\bf d}$ such that $s<s'$. Here we assume that $s'$ is minimal. 
Then by the condition (3) for $\mc D$ and the minimality of $s'$, we can move $1_{(s',r)}$ to the right by one position if $r<n-1$ and by two positions if $r=n-1$ to get another ${\bf d}'\in \mc D$ by definition of $\mc D$. Repeating this step, we obtain a required sequence. This proves our claim.

Now, let ${\bf d}=(d_k)\in \mc D$ be given and let $(j'_1,\ldots,j'_L)$ be the subsequence such that $d_{j'_k}=1$. By the above claim and definition of $\mc D$, we obtain the following two  reduced expressions;
\begin{equation*}
s_{j'_L}\cdots s_{j'_2}s_{j'_1} = s_{j_{M+L}}\cdots s_{j_{M+2}}s_{j_{M+1}}, 
\end{equation*}
where we obtain the right-hand side from the left only by applying 2-term braid move. 
Since 
$\tf_{j_{M+L}}\cdots\tf_{j_{M+2}}\tf_{j_{M+1}}(+,+,+\ldots,+)=(+,+,-,\ldots,-)$, 
we obtain \eqref{eq:aux-5.1}, 
which implies that  there exists $\pi\in \mc T'$ such that ${\bf d}(\pi)={\bf d}$. The proof completes.
\qed


Let $\mc P$ be the set of double paths at $\theta$. 
Consider two operations on ${\mc P}$ which change a part of ${\bf p}\in \mc P$ in the following way;
{\allowdisplaybreaks \begin{align}
&\quad\quad\quad \xymatrixcolsep{0.3pc}\xymatrixrowsep{0.4pc}
\xymatrix{
  &  {\bullet}\ar@{->}[ld]  &   \\
   {\bullet}\ar@{->}[rd] &  &  {\bullet}   \\  
  &  {\bullet} &   }
\quad
\xymatrix{
 \\
\longrightarrow \\  
}\quad
\xymatrix{
  &  {\bullet}\ar@{->}[rd]  &   \\
   {\bullet} &  &  {\bullet}\ar@{->}[ld]   \\  
  &  {\bullet} &   } \label{eq:aux2-5.1}\\  
&\xymatrixcolsep{0.3pc}\xymatrixrowsep{0.4pc}
\xymatrix{
\\
   &  {\bullet}\ar@{->}[ld] &  &  {\bullet}\ar@{->}[ld] & \\  
    {\bullet} & &  {\bullet} & &  {\bullet}}
\quad
\xymatrix{
 \\
\longrightarrow \\  
}\quad
\xymatrix{\\
   &  {\bullet}\ar@{->}[rd] &  &  {\bullet}\ar@{->}[rd] & \\  
    {\bullet} & &  {\bullet} & &  {\bullet}} \label{eq:aux3-5.1}
\end{align}}
where in \eqref{eq:aux3-5.1} the rows denote the two rows from the bottom in $\Delta_n$.

\begin{lem}\label{lem:bijection from P to D}
For $\bf p\in \mc P$, let ${\bf d}({\bf p})=(d_k)\in \mc D$ be given by $d_k=0$ if $\bf p$ passes the position of $d_k$, and $d_k=1$ otherwise.
Then the map sending ${\bf p}$ to ${\bf d}({\bf p})$ is a bijection from $\mc P$ to $\mc D$.
\end{lem}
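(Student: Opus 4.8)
The plan is to verify that ${\bf p}\mapsto {\bf d}({\bf p})$ is well-defined into $\mc D$, injective, and surjective, treating the interlacing condition (3) of $\mc D$ as the heart of the matter.

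First I would record the elementary count. A double path ${\bf p}=(p_1,p_2)$ at $\theta$ lives entirely in $\Delta_n$ and, since $\theta$ sits in the top row and each step descends exactly one row, both $p_1$ and $p_2$ have length $n-1$; thus ${\bf p}$ occupies the single dot $\theta$ in the first row and exactly two dots (the vertex of $p_1$ and the vertex of $p_2$) in each row $2\le r\le n-1$, the former strictly left of the latter. Hence ${\bf d}({\bf p})$ has no $1$ in the first two rows and exactly $r-2$ ones in row $r$ for $3\le r\le n-1$, which is conditions (1) and (2). For condition (4), the endpoints $\gamma_s=\ep_{k+1}+\ep_{k+2}$ and $\delta_s=\ep_k+\ep_{k+1}$ of a double path are adjacent dots of the bottom row; removing two adjacent entries from the alternating sequence $n,n-1,n,\dots$ read from the right (as recorded for the array $(j_k)$) leaves the alternation intact, so the $j_k$'s at the $1$'s of the bottom row still read $n,n-1,n,\dots$.

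To handle condition (3) I would pass to planar coordinates on $\Delta_n$, assigning to the dot in left-position $p$ of row $r$ the abscissa $x=2p-r-1$, so that a down-left (resp. down-right) step changes $x$ by $-1$ (resp. $+1$) and the dots of adjacent rows have opposite parity of $x$. In these coordinates a pair of $1$'s of row $r$ with only $0$'s between them is a pair of dots whose same-row interior consists only of covered dots, hence contains one strand (an isolated covered dot at abscissa $X$, flanked by $1$'s at $X\mp2$) or both strands (two adjacent covered dots at abscissae $X,X+2$). In the first case the unique strand through $X$ continues to exactly one of the two dots $X\pm1$ of the neighbouring row while the other strand lies at abscissa at least $X+4$ away, leaving exactly one $1$ between the flanking dots; in the second case the left and right strands pass through two distinct dots among $X-1,X+1,X+3$, again leaving exactly one $1$. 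The two directions of condition (3) are symmetric under reversing rows, the boundary cases (a covered dot at the left or right edge of its row) produce no straddling pair on that side, and the excluded rows $r=3$ (resp. $r=n-1$) are precisely those where the neighbouring row $r-1=2$ carries no $1$ (resp. where row $r+1$ does not exist). This gives well-definedness. Injectivity is then immediate, since the strands of ${\bf p}$ are recovered from ${\bf d}({\bf p})$ as the leftmost and rightmost $0$ of each row, the connecting edges being forced by monotonicity and non-crossing.

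For surjectivity I would exploit the two operations \eqref{eq:aux2-5.1} and \eqref{eq:aux3-5.1} on $\mc P$, mirroring the argument of Lemma \ref{lem:bijection from T to D}. After checking that each operation sends a double path to a double path (the only point being that non-crossing is preserved), I would show that under ${\bf p}\mapsto {\bf d}({\bf p})$ the move \eqref{eq:aux2-5.1} shifts a single $1$ by one position within a row while \eqref{eq:aux3-5.1} adjusts the $1$'s of the bottom two rows, so that these operations realize exactly the elementary steps that move a $1$ by one position used in Lemma \ref{lem:bijection from T to D} to connect an arbitrary ${\bf d}\in\mc D$ to the base array ${\bf d}(\pi_0)$. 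Identifying an explicit base double path ${\bf p}_0$ with ${\bf d}({\bf p}_0)={\bf d}(\pi_0)$ and reversing such a chain of moves then produces, for each ${\bf d}\in\mc D$, a double path ${\bf p}$ with ${\bf d}({\bf p})={\bf d}$. The main obstacle is condition (3): one must match the combinatorics of ``between them in the adjacent row'' across the offset rows of the centered triangle $\Delta_n$, both when verifying well-definedness and when checking that every admissible shift of a $1$ in $\mc D$ is realized by one of the two operations; the planar-coordinate bookkeeping above is what makes this matching transparent.
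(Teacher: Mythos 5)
Your argument is correct and arrives at the same conclusion as the paper, but the well-definedness step is handled by a genuinely different route. The paper never verifies conditions (1)--(4) of $\mc D$ directly for a general ${\bf d}({\bf p})$: it checks them only for the base path ${\bf p}_0$ (whose image is ${\bf d}(\pi_0)$), asserts that the operations \eqref{eq:aux2-5.1} and \eqref{eq:aux3-5.1} preserve membership of the image in $\mc D$, and invokes the fact that every ${\bf p}\in\mc P$ is reachable from ${\bf p}_0$ by finitely many such moves. Your direct verification in the planar coordinate $x=2p-r-1$ is more self-contained --- it does not rest on the connectivity of $\mc P$ under the two operations, which the paper leaves unproved --- at the cost of a more delicate case analysis; your surjectivity argument, by contrast, is essentially identical to the paper's, transporting the chain of elementary moves of $1$'s from the proof of Lemma \ref{lem:bijection from T to D} back to $\mc P$ via the two operations. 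One small point to repair in your check of condition (3): your case analysis assumes the interior of the straddling pair of $1$'s in row $r$ contains one or two covered dots, but two $1$'s can be adjacent in a row (empty interior, so the hypothesis of (3) holds vacuously). In that case the unique dot of the neighbouring row lying between them, at abscissa $X+1$ when the $1$'s sit at $X$ and $X+2$, cannot be covered, since a strand through it would have to pass through one of those two flanking dots of row $r$; hence it is a $1$ and condition (3) still holds. With that case added, your proof is complete.
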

\pf  Let ${\bf p}_0\in \mc P$ be a unique double path at $\theta$ such that ${\bf p}_0$ ends at the first two dots from the left in the bottom row of $\Delta_n$, that is, $\beta_1$ and $\beta_{n}$ (see the first double path in Example \ref{ex:double path}). It is clear that ${\bf d}({\bf p}_0)={\bf d}(\pi_0)\in \mc D$, where $\pi_0\in {\mc T}'$ is given in the proof of Lemma \ref{lem:bijection from T to D}. 

Let ${\bf p}\in \mc P$ given. 
Suppose that ${\bf p}'$ is obtained from ${\bf p}$ 
by applying either \eqref{eq:aux2-5.1} or \eqref{eq:aux3-5.1}.
If ${\bf d}({\bf p})\in \mc D$, then it is clear that ${\bf d}({\bf p}')\in \mc D$.
Since one can obtain ${\bf p}$ from ${\bf p}_0$ by applying \eqref{eq:aux2-5.1} and \eqref{eq:aux3-5.1} a finite number of times, we have ${\bf d}({\bf p})\in \mc D$.
Hence the map ${\bf p}\mapsto {\bf d}({\bf p})$ is well-defined and injective. The surjectivity follows from the fact that any ${\bf d}\in \mc D$ can be obtained from ${\bf d}(\pi_0)$ by moving an entry to the left by one or two depending on the row which it belongs to  (see the proof of Lemma \ref{lem:bijection from T to D}), which corresponds to \eqref{eq:aux2-5.1} or \eqref{eq:aux3-5.1}.
\qed

\begin{proof} [Proof of Theorem \ref{thm:epsilon^*_n}]
By Lemmas \ref{lem:bijection from T to D} and \ref{lem:bijection from P to D}, 
there exists a bijection from $\mc T'$ to $\mc P$. If $\pi\in \mc T'$ corresponds to $\bf p\in \mc P$, then we have  
$||{\bf c}||_{\pi}= ||{\bf c}||_{\bf p}$ for ${\bf c}\in \B^J$. 
Hence by Lemma \ref{lem:T_trail}, we have $\varepsilon_n^*(\bc)  
= \max \left\{\, ||\bc||_{\bf p}\,|\, \bf p\in{\mc P}\, \right\}$.
\end{proof}

\subsection{Proof of Theorem \ref{thm:shape}} \label{pf:shape}
We generalize the arguments in Theorem \ref{thm:epsilon^*_n}. We keep the notations in Section \ref{pf:epsilon}. Let $\bj_0$ be as in \eqref{eq:expression j_0}. For $1 \le l \le [\frac{n}{2}]$, let $k_l$ be the index such that ${j_{k_l}}$ belongs to the subword $({\bi_{2l-1}^J})^{\rm *op}$ of $\bj_0$ and $j_{k_l}=n$.
For $i\in I$ and an element $b$ of a crystal, let $\te^{max}_ib = \te_i^{\varepsilon_i(b)}b$.
The following is crucial when proving Theorem \ref{thm:shape}.

\begin{prop} \label{prop:epsilon_shape} 
For ${\bf c} = (c_k) \in \B^J$ and $1 \le l \le [\frac{n}{2}]$,  
\begin{equation} \label{eq:kth_length}
\lambda_{2l-1}({\bf c}) = 
\varepsilon_{j_{k_l}}
\left( \tilde{e}_{j_{{k_l}-1}}^{max}\cdots \tilde{e}_{j_2}^{max}\tilde{e}_{j_1}^{max}{\bf c} \right),
\end{equation} 
and it is equal to
\begin{equation}\label{eq:kth_realization}
\min_{\pi_1}\left\{ \sum_{k=1}^{M} d_k(\pi_1)c_k \right\} 
- 
\min_{\pi_2}\left\{ \sum_{k=1}^{M}d_k(\pi_2)c_k \right\},
\end{equation} where $\pi_1$ and $\pi_2$ are ${\bf i}_0$-trails from 
\begin{equation*}
\begin{split}
{\rm wt}(\underbrace{-, \cdots, -}_{2l-2}, \underbrace{+, \cdots, +}_{n-2l+2}) \quad \textrm{and} \quad  {\rm wt}(\underbrace{-, \cdots, -}_{2l}, \underbrace{+, \cdots, +}_{n-2l})
\end{split}
\end{equation*} 
to the lowest weight element in $B(\varpi_n)$, respectively.
\end{prop}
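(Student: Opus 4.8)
The plan is to establish the two equalities in turn, reading the right-hand side of \eqref{eq:kth_length} as a string coordinate in direction $\bj_0$ and \eqref{eq:kth_realization} as its Berenstein--Zelevinsky expansion, in close parallel with Lemmas \ref{lem:max}--\ref{lem:bijection from P to D}. First I would note, using the reformulation \eqref{eq:epsilon}, that $\varepsilon_{j_{k_l}}(\te_{j_{k_l-1}}^{\rm max}\cdots\te_{j_1}^{\rm max}{\bf c})$ is precisely the $k_l$-th coordinate $t_{k_l}$ of the string, taken in direction $\bj_0$, of the element $b$ with $b^\ast={\bf c}$. Since $j_{k_l}=n$, it is $\varepsilon_n$ of the raised element, and because $\beta_1=\alpha_n$ in \eqref{eq:convex order i_0}, the relation \eqref{eq:f} shows $\varepsilon_n=c_{\ov n\,\ov{n-1}}$ throughout $\B$.

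To prove \eqref{eq:kth_length} I would first observe that the initial $M$ letters $(j_1,\dots,j_M)=(\bi_J)^{*{\rm op}}$ are Levi indices and, being a reduced subword of $\bj_0$ of length $M=\ell(w_0^{\mf l})$, form a reduced word for the longest element $w_0^{\mf l}$ of the Weyl group of $\mf l$; hence $\te_{j_M}^{\rm max}\cdots\te_{j_1}^{\rm max}$ carries any ${\bf c}\in\B^J$ to the $\mf l$-highest weight element $b^{\rm hw}$ of its component, so that the right-hand side of \eqref{eq:kth_length} depends only on the $\mf l$-component, i.e. $t_{k_l}({\bf c})=t_{k_l}(b^{\rm hw})$. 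By the characterization of highest weight elements in the proof of Proposition \ref{prop:decomp of B^J}, $b^{\rm hw}$ is supported on the roots $\epsilon_{n-2s+1}+\epsilon_{n-2s+2}$ with $c_{\ov{n-2s+2}\,\ov{n-2s+1}}=\lambda_{2s-1}({\bf c})$, and it then remains to evaluate $t_{k_l}$ at this explicit element. Since $V(\varpi_n)$ is minuscule I would carry out this evaluation inside the spin crystal $B(\varpi_n)$, where the relevant operators act by flipping signs, and track how the mass at $\epsilon_{n-2l+1}+\epsilon_{n-2l+2}$ is transported into the slot $\beta_1=\epsilon_{n-1}+\epsilon_n$ read off by $\varepsilon_n$, yielding $\lambda_{2l-1}({\bf c})$.

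For \eqref{eq:kth_realization} I would apply Theorem \ref{thm:BZ} with $\bi=\bj_0$, $\bi'=\bi_0$, the given ${\bf c}$, and $k=k_l$: then $\mathbf t=t_{\bj_0}(b_{\bi_0}({\bf c})^\ast)$ and $t_{k_l}$ equals the difference of the minima of $\sum_m d_m(\pi)c_m$ over $\bi_0$-trails from $s_{j_1}\cdots s_{j_{k_l-1}}\varpi_n$ and from $s_{j_1}\cdots s_{j_{k_l}}\varpi_n$ to $w_0\varpi_n$. The decisive computation is that the $k_l$-th root in the convex order of $\bj_0$ equals $\epsilon_{2l-1}+\epsilon_{2l}$, whence $s_{j_1}\cdots s_{j_{k_l-1}}\varpi_n={\rm wt}(-^{2l-2}\,+^{n-2l+2})$ and the additional reflection $s_{\epsilon_{2l-1}+\epsilon_{2l}}$ turns this into ${\rm wt}(-^{2l}\,+^{n-2l})$; since ${\bf c}\in\B^J$ forces $c_m=0$ for $m>M$, the two sums collapse to $m\le M$ and $w_0\varpi_n$ is the lowest weight, which is exactly \eqref{eq:kth_realization}.

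I expect the main obstacle to be the two explicit computations just flagged: the value identification $t_{k_l}(b^{\rm hw})=\lambda_{2l-1}({\bf c})$ and the extremal-weight identity $s_{j_1}\cdots s_{j_{k_l-1}}\varpi_n={\rm wt}(-^{2l-2}\,+^{n-2l+2})$. These are the genuine generalizations of Lemmas \ref{lem:trail in levi} and \ref{lem:max}, and each requires careful bookkeeping of the reduced word $\bj_0$ together with the sign-vector dynamics in $B(\varpi_n)$. As in the rest of Section \ref{pf:epsilon}, the case of odd $n$ introduces the twist $n^\ast=n-1$ coming from \eqref{eq:expression j_0}, but it is handled in the same way.
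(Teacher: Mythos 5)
Your proposal follows essentially the same route as the paper: reduce \eqref{eq:kth_length} to the explicit $\mf l$-highest weight element using that $(j_1,\ldots,j_M)$ is a reduced word for the longest element of the Weyl group of $\mf l$, and then obtain \eqref{eq:kth_realization} from Theorem \ref{thm:BZ} with $\bi=\bj_0$, $\bi'=\bi_0$ together with the extremal-weight computation \eqref{eq:head of trail}. The only cosmetic divergence is that the paper carries out the remaining verification of \eqref{eq:kth_length} with the signature rule of Proposition \ref{prop:signature for type D} on $\B$ itself, whereas you phrase it as a computation ``inside'' $B(\varpi_n)$; the spin crystal really only enters through the trail formula in the second identity.
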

\pf Let $\bc\in \B^J$ given.
Since $(j_1,\ldots,j_M)$ is a reduced expression of the longest element for $\mf l$, $\tilde{e}_{j_{M}}^{max}\cdots \tilde{e}_{j_1}^{max}{\bf c}$ is an $\mf l$-highest weight element, which is of the form \eqref{eq:characterization of l h.w.}. 
Then it is straightforward to verify \eqref{eq:kth_length} using Proposition \ref{prop:signature for type D}.

On the other hand, the righthand side of \eqref{eq:kth_length} can be obtained by 
\eqref{eq:BZ_formula1} letting
\begin{equation} \label{eq:setting}
{\bf i} = {\bf j}_0, \ \ {\bf i}' = {\bf i}_0,
\end{equation}
where in this case
\begin{equation}\label{eq:head of trail}
\begin{split}
& s_{j_1} \cdots s_{j_{{k_l}-1}}(\varpi_{j_{k_l}}) = s_{j_1} \cdots s_{j_{{k_l}-1}}(\varpi_n)
= {\rm wt}(\underbrace{-, \cdots, -}_{2l-2}, \underbrace{+, \cdots, +}_{n-2l+2}),\\
& s_{j_1} \cdots s_{j_{k_l}}(\varpi_{j_{k_l}}) = {\rm wt}(\underbrace{-, \cdots, -}_{2l}, \underbrace{+, \cdots, +}_{n-2l}).
\end{split}
\end{equation}
Hence the formula \eqref{eq:BZ_formula1} gives \eqref{eq:kth_realization}.
\qed

Let $1 \le l \le [\frac{n}{2}]$ given.
Let ${\mc T}_{l}$ be the set of ${\bi_0}$-trails 
from $s_{j_1} \cdots s_{j_{k_l}}(\varpi_n)$ \eqref{eq:head of trail} to $w_0\varpi_n$.
Let ${\mc D}_l$ be the set of arrays where either $0$ or $1$ is placed in each $r$-th row of $\Delta_n$ from the top ($1 \le r \le n-1$) satisfying the following conditions;
\begin{itemize}
	\item[(1)] the entries in the first $2l$ rows are $0$,
	
	\item[(2)] the number of $1$'s in each $r$-th row is $r-2l$ for $2l+1 \le r \le n-1$,
	
	\item[(3)] if $r>2l+1$ (resp. $r<n-1$) and there are two $1$'s in the $r$-th row such that the entries in the same row between them are zero, then there is exactly one $1$ in the $(r-1)$-th row (resp. $(r+1)$-th row) between them,
	\item[(4)] the $j_k$'s corresponding to $1$'s in the $(n-1)$-th row are $n, n-1, n,\ldots$ from left to right.
\end{itemize}
Note that ${\mc D}_1={\mc D}$.

\begin{lem}\label{lem:bijection from T to D general} 
For $\pi\in {\mc T}_l$, the map sending $\pi$ to ${\bf d}(\pi)$ is a bijection from ${\mc T}_l$ to ${\mc D}_l$, where ${\bf d}(\pi)$ is defined in the same way as in $\mc T'$.
\end{lem}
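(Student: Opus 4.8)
The plan is to run an argument entirely parallel to the proof of Lemma~\ref{lem:bijection from T to D}, replacing the two frozen minus signs at the front of the spin vector by $2l$ of them and the conditions of $\mc D$ by those of $\mc D_l$ (recall $\mc D_1=\mc D$). The one structural change is that the trails here are $\bi_0$-trails, so the Levi word $\bi_J$ occupies the \emph{last} $M$ steps rather than the first, and consequently the restriction imposed on $\mc T'$ via Lemma~\ref{lem:trail in levi} is no longer needed: as I explain next, the Levi part is automatically inactive. As before I assume $n$ even, the odd case being essentially identical after replacing $w_0\varpi_n={\rm wt}(-,\ldots,-)$ by ${\rm wt}(-,\ldots,-,+)$ and tracking the letter $n$ occurring in $\bi_J$.

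First I would reduce to the $\Phi^+(J)$-part of the trail. For $\pi=(\nu_0,\ldots,\nu_N)\in\mc T_l$ the last $M$ steps are governed by $\bi_J$, whose reflections merely permute the coordinates of $B(\varpi_n)$; since $w_0\varpi_n={\rm wt}(-,\ldots,-)$ is fixed by all such permutations these steps are inactive and $\nu_M=w_0\varpi_n$. Hence the first $M$ steps, governed by $\bi^J$, carry $\nu_0={\rm wt}(\underbrace{-,\ldots,-}_{2l},+,\ldots,+)$ to ${\rm wt}(-,\ldots,-)$ and determine the array ${\bf d}(\pi)$ on $\Delta_n$, defined just as for $\mc T'$. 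Extracting the subsequence $(i'_1,\ldots,i'_L)$ of $\bi^J$ along which $d=1$ and invoking \eqref{eq:spin crystal}, I obtain the analogue of \eqref{eq:aux-5.1},
\[
\tf_{i'_L}\cdots\tf_{i'_1}(+,\ldots,+)=(\underbrace{+,\ldots,+}_{2l},-,\ldots,-),
\]
the first $2l$ entries being untouched because every active letter satisfies $i'_k\ge 2l+1$, just as $j'_k\ge 3$ when $l=1$.

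Next I would verify that ${\bf d}(\pi)\in\mc D_l$. The index array of $\bi^J$ on $\Delta_n$ is precisely the array of Lemma~\ref{lem:bijection from T to D} (the $r$-th row from the top filled with $r$ for $r\le n-2$ and the bottom row filled with $\ldots,n-1,n,n-1,n$ from right to left), and the displayed relation forces the first $2l$ rows to be zero (condition~(1)), each row $r$ with $2l+1\le r\le n-1$ to carry $r-2l$ ones so that $L=\binom{n-2l}{2}$ (condition~(2)), and the interleaving condition~(3), which I check as when $l=1$ by enumerating the minus signs and recording, for each active letter, whether it slides a minus one place to the left or (for the letter $n$) creates a new pair, giving that $1_{(s,r)}$ lies northeast of $1_{(s+1,r+1)}$ and northwest of $1_{(s,r+1)}$, together with the bottom-row pattern of condition~(4). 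The only change from the $l=1$ case is the left--right orientation of the ones, reflected because $\bi^J$ is now traversed forwards, which is why condition~(4) is phrased from left to right. Injectivity is immediate, since $\pi$ is reconstructed from its active letters and the forced trivial Levi tail.

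Finally, for surjectivity I would repeat the sliding argument. Starting from the base array ${\bf d}(\pi_0^{(l)})$, whose ones occupy the initial segment of $\bi^J$, I would show that any ${\bf d}\in\mc D_l$ is joined to it by moves shifting a single $1$ (by one step in the rows $r<n-1$ and by two in the bottom row $r=n-1$); condition~(3) together with a minimality choice keeps each move inside $\mc D_l$, exactly as in Lemma~\ref{lem:bijection from T to D}. Each move is a $2$-term braid move between the associated reduced subwords, and feeding the resulting word into the displayed relation produces a trail $\pi\in\mc T_l$ with ${\bf d}(\pi)={\bf d}$. I expect the main obstacle to be condition~(3) for general $l$: one must argue that freezing $2l$ coordinates still yields exactly the one-step interleaving of consecutive rows, which needs the careful bookkeeping of which active letter moves which minus sign and of the special role of $n$ in the bottom row. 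The odd-$n$ case is a secondary nuisance, handled as when $l=1$ by using ${\rm wt}(-,\ldots,-,+)$ as the lowest weight and allowing the genuine occurrence of $n$ in $\bi_J$.
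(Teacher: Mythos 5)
Your overall strategy is the same as the paper's: transport the proof of Lemma \ref{lem:bijection from T to D} to the $\bi_0$-setting with the orientation of $\Delta_n$ reversed. Your observations that the $\bi_J$-tail of a trail in $\mc T_l$ is forced to be trivial (so that no analogue of Lemma \ref{lem:trail in levi} is needed), that the active letters are exactly those $\ge 2l+1$, and that conditions (1)--(4) of $\mc D_l$ then follow as in the $l=1$ case are all correct, and in fact supply detail that the paper omits.

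There is, however, one concrete mis-step in the surjectivity argument: the base array. You take $\pi_0^{(l)}$ to be the trail ``whose ones occupy the initial segment of $\bi^J$.'' In the $\bj_0$-setting of Lemma \ref{lem:bijection from T to D} the initial segment of the relevant word does give the base, because that word traverses $\Delta_n$ by whole diagonals from the right, so its initial segment of length $L$ is exactly the sub-triangle at the southeast corner (the rightmost $r-2$ entries of each row $r$). But $\bi^J$ traverses $\Delta_n$ by diagonals from the left, each read from the bottom row up to row $1$; hence an initial segment of $\bi^J$ necessarily contains positions in the top $2l$ rows (already the first diagonal reaches row $1$), so the resulting array violates condition (1) of $\mc D_l$ and is not the image of any trail in $\mc T_l$. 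The correct base --- the one the paper uses --- is the sub-triangle $\Delta_{n-2l}$ placed at the \emph{southwest} corner of $\Delta_n$, i.e.\ the leftmost $r-2l$ positions of row $r$ for $2l+1\le r\le n-1$; in terms of $\bi^J$ its active letters form the bottom initial segment of each diagonal (restricted to rows $\ge 2l+1$), not an initial segment of $\bi^J$ itself. One checks directly that this is a trail: the first diagonal's active letters $n,n-2,\ldots,2l+1$ create a pair of minus signs and drag one of them to position $2l+1$, the second diagonal's letters drag the other to position $2l+2$, and so on. With this corrected base your sliding argument --- moving a single $1$ to the left by one step, or by two in the bottom row --- goes through exactly as you describe and matches the paper.
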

\pf It can be shown by almost the same arguments as in Lemma \ref{lem:bijection from T to D} that the map is well-defined, and clearly injective.

It suffices to show that it is surjective.
Let $\Delta'_{n-2l}$ be the set $\Delta_{n-2l}$, which we regard as a subset of $\Delta_n$ sharing the same southwest corner with $\Delta_n$.
Let $\pi_0$ be a unique trail in ${\mc T}_l$ such that $d_k(\pi_0)=1$ if and only if $d_k(\pi_0)$ is located in $\Delta'_{n-2l}$. 
Then as in the proof of Lemma \ref{lem:bijection from T to D} we can check that for any ${\bf d}\in \mc D_l$ there exists a sequence ${\bf d}={\bf d}_0,{\bf d}_1,\ldots,{\bf d}_m={\bf d}(\pi_0)$ in $\mc D_l$ such that 
${\bf d}_{l+1}$ is obtained from ${\bf d}_l$ by moving an entry $1$ to the {\em left}, and hence that there exists $\pi\in \mc T_l$ such that ${\bf d}(\pi)={\bf d}$.
\qed

Let ${\mc P}_l$ be the set of $l$-tuple ${\bf p} = ({\bf p}_1, \cdots, {\bf p}_{l})$ of mutually non-intersecting double paths in $\Delta_n$ such that each ${\bf p}_{i}$ is a double path at some point in the $(2i-1)$-th row.

\begin{lem} \label{lem:bijection from P to D general} 
The map sending ${\bf p}$ to ${\bf d}({\bf p})$ is a surjective map from ${\mc P}_l$ to ${\mc D}_l$, where ${\bf d}({\bf p})$ is defined in the same way as in ${\mc P}$.
\end{lem}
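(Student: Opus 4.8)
The plan is to run the proof of Lemma~\ref{lem:bijection from P to D} for a single double path in parallel over the $l$ members of a family, using the identification ${\mc T}_l\cong {\mc D}_l$ established in Lemma~\ref{lem:bijection from T to D general}. Two assertions must be proved: that ${\bf d}({\bf p})\in {\mc D}_l$ for every ${\bf p}\in {\mc P}_l$, so that the rule really is a map into ${\mc D}_l$, and that this map is onto. As the base point I would take the family ${\bf p}_0\in {\mc P}_l$ in which the double path ${\bf p}_i$ is born in the $(2i-1)$-th row and each of the $2l$ constituent paths is pushed as far to the right as the non-intersecting condition allows; then the dots left uncovered by ${\bf p}_0$ are exactly those of the southwest subtriangle $\Delta'_{n-2l}$, so that ${\bf d}({\bf p}_0)$ is precisely the array ${\bf d}(\pi_0)$ attached in the proof of Lemma~\ref{lem:bijection from T to D general} to the distinguished trail $\pi_0\in {\mc T}_l$. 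In particular ${\bf d}({\bf p}_0)\in {\mc D}_l$.

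For well-definedness I would argue directly. Conditions (1) and (2) are a counting statement: in each row $r\le 2l$ the active double paths (those born in rows $1,3,\dots$ up to $r$) cover all $r$ dots, while in each row $r>2l$ all $l$ double paths are present and, being mutually non-intersecting, cover $2l$ distinct dots, leaving exactly $r-2l$ ones. Condition (3) is the nesting constraint forced by the fact that the $2l$ paths descend without crossing, and condition (4) records the pattern $n,n-1,n,\dots$ in which the two paths of each double path reach the bottom row. Together these show ${\bf d}({\bf p})\in {\mc D}_l$.

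Surjectivity is where the two local moves \eqref{eq:aux2-5.1} and \eqref{eq:aux3-5.1} enter. The proof of Lemma~\ref{lem:bijection from T to D general} exhibits, for any ${\bf d}\in {\mc D}_l$, a finite chain of elementary moves joining ${\bf d}$ to ${\bf d}(\pi_0)$, each transporting a single entry $1$ to the left. Reading this chain backwards turns it into a sequence of rightward $1$-moves starting from ${\bf d}(\pi_0)={\bf d}({\bf p}_0)$ and ending at ${\bf d}$, and each individual $1$-move is to be realized by applying \eqref{eq:aux2-5.1} or \eqref{eq:aux3-5.1} (read in the appropriate direction) to the current family of double paths, transporting the corresponding covered dot by one position, or by two in the bottom row. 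Carrying these moves out one after another produces a family ${\bf p}\in {\mc P}_l$ with ${\bf d}({\bf p})={\bf d}$, which is the required surjectivity.

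The hard part will be the bookkeeping that keeps the $l$ double paths mutually non-intersecting throughout this process. Unlike the case $l=1$ of Lemma~\ref{lem:bijection from P to D}, a rightward deformation of one path could a priori run into a neighbouring double path; I expect to use condition (3) of ${\mc D}_l$, which controls how the runs of covered dots in consecutive rows are nested, to guarantee that at each stage a legal, non-crossing representative of the move exists, with the birth rows $1,3,\dots,2l-1$ treated separately. This flexibility in choosing representatives is also exactly why the map is only asserted to be surjective and not bijective: for $l>1$ the union of the $2l$ paths underlying a given ${\bf d}\in {\mc D}_l$ can in general be cut into non-intersecting double paths in more than one way, so distinct families ${\bf p}$ may share the same complement ${\bf d}({\bf p})$.
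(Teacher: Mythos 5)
Your proposal matches the paper's argument in all essentials: the same base family $\underline{\bf p}^0$ (paths born at the leftmost points of the odd rows and pushed fully to the right, so that the uncovered dots form $\Delta'_{n-2l}$ and ${\bf d}(\underline{\bf p}^0)={\bf d}(\pi_0)$), and surjectivity obtained by reversing the chain of single-entry moves from the proof of Lemma~\ref{lem:bijection from T to D general} and realizing each step by \eqref{eq:aux2-5.1} or \eqref{eq:aux3-5.1}. The only cosmetic difference is that you verify membership of ${\bf d}({\bf p})$ in ${\mc D}_l$ by direct counting of covered dots, whereas the paper deduces it from connectivity of ${\mc P}_l$ under the two moves; both are fine, and your explicit remark on why the map is only surjective (not injective) for $l>1$ is consistent with the statement.
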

\pf Suppose that ${\underline{\bf p}} = ({\bf p}_1, \cdots, {\bf p}_{l})\in {\mc P}_l$ is given. 
By definition of ${\mc P}_l$, one can check that all the points in the first $2l$ rows in $\Delta_n$ are occupied by $\underline{\bf p}$.

Let ${\underline{\bf p}^0=({\bf p}^0_1,\ldots,{\bf p}^0_l)}$ be given such that ${\bf p}^0_i$ starts at $\ep_{2i-1}+\ep_{n}$ and ends at $\ep_{2i-1}+\ep_{2i}$ and $\ep_{2i}+\ep_{2i+1}$ for $1\leq i\leq r$.
We have ${\bf d}(\underline{\bf p}^0)={\bf d}(\pi_0)$, where $\pi_0$ is given in the proof of Lemma \ref{lem:bijection from T to D general}. 
Applying the operations \eqref{eq:aux2-5.1} and \eqref{eq:aux3-5.1} on ${\mc P}_l$,
one can obtain a sequence in ${\mc P}_l$ {\em from $\underline{\bf p}$ to $\underline{\bf p}^0$}, whose image under ${\bf d}$ lies in $\mc D_l$. Then similar arguments as in Lemma \ref{lem:bijection from P to D} implies the surjectivity.
\qed

\begin{proof} [Proof of Theorem \ref{thm:shape}] 
Let ${\bf c} \in \B^J$ given. For $\pi\in \mc T_l$, there exists $\underline{\bf p}= ({\bf p}_1, \cdots, {\bf p}_{l})\in \mc P_l$ such that ${\bf d}(\underline{\bf p})={\bf d}(\pi)$ by Lemmas \ref{lem:bijection from T to D general}, and \ref{lem:bijection from P to D general}, and
\begin{equation}\label{eq:aux 5.3}
\sum_{1\leq k\leq M}d_k(\pi)c_k = 
\sum_{1\leq k\leq M}c_k - \left(||\bc||_{{\bf p}_1}+\cdots+||\bc||_{{\bf p}_l}\right).  
\end{equation}
Indeed, \eqref{eq:aux 5.3} holds for any $\underline{\bf p}\in \mc P_l$ such that ${\bf d}(\underline{\bf p})={\bf d}(\pi)$.
Therefore, we have by \eqref{eq:kth_realization} and \eqref{eq:aux 5.3}
{\allowdisplaybreaks
\begin{align*}
&\lambda_{2l-1}(\textbf{c}) \\
&= 
\min_{\pi_1\in \mc T_{l-1}}
\left\{\, \sum_{1\leq k\leq M}d_k(\pi_1)c_k\,\right\} - 
\min_{\pi_2\in \mc T_{l}}
\left\{\, \sum_{1\leq k\leq M}d_k(\pi_2)c_k\,\right\} \\
&= 
\left(\sum_{1\leq k\leq M}c_k - 
\max_{\underline{\bf p}\in {\mc P}_{l-1}}
\{\,||{\bf c}||_{{\bf p}_1}+\cdots+||{\bf c}||_{{\bf p}_{l-1}}\,\}\right)\\ 
& \hskip 4cm -\left(\sum_{1\leq k\leq M}c_k - 
\max_{\underline{\bf p}\in {\mc P}_{l}}
\{\,||{\bf c}||_{{\bf p}_1}+\cdots+||{\bf c}||_{{\bf p}_l}\,\}\right) \\
&= 
\max_{\underline{\bf p}\in {\mc P}_{l}}
\{\,||{\bf c}||_{{\bf p}_1}+\cdots+||{\bf c}||_{{\bf p}_l}\,\}
-
\max_{\underline{\bf p}\in {\mc P}_{l-1}}
\{\,||{\bf c}||_{{\bf p}_1}+\cdots+||{\bf c}||_{{\bf p}_{l-1}}\,\}.
\end{align*}} 
This gives the formula in Theorem \ref{thm:shape}.
\end{proof}

\subsection{Proof of Theorem \ref{thm:isomorphism theorem}} \label{pf:burge} 
We keep the notations in Section \ref{subsec:burge}.
We assume that $\Delta_i\subset \Delta_n$ for $1\leq i\leq n$, where both of $\Delta_i$ and $\Delta_n$ share the same southeast corner.
For $\textbf{c}=(c_k)\in \B^J$, 
let $\textbf{c}_{\Delta_i}\in \B^J$ (resp. $\textbf{c}_{\Delta_i^c}\in \B^J$) whose component in $\Delta_i$ (resp. $\Delta_i^c:=\Delta_n\setminus\Delta_i$) is $c_k$ and $0$ elsewhere. Let 
\begin{equation*}
\B^J_{\Delta_i} = \{\, \textbf{c} \in \B^J \,|\, \textbf{c}_{\Delta_i^c} = \textbf{0} \,\}.
\end{equation*}

Fix $i \in I\setminus\{n\}$.
Let $\textbf{c} \in \B^J_{\Delta_{i+1}}$ given with $\bc=\bc(\ba,\bb)$ for a unique $(\ba,\bb)=(a_1\ldots a_r,b_1\ldots b_r)\in \Omega$. 
We divide $(\ba,\bb)$ into two biwords $(\ba',\bb')=(a_1\ldots a_s,b_1\ldots b_s)$ and 
$(\ba'',\bb'')=(a_{s+1}\ldots a_r,b_{s+1}\ldots b_r)$ where
$a_s\leq \ov{i}$ and $a_{s+1}>\ov{i}$ so that
\begin{equation}\label{eq:biword(a,b)}
\biggr(\begin{array}{c} \ba' \\ \bb' \end{array}\biggr)
=
\biggr({\begin{array}{c} \ov{i+1} \\ \ov{1} \end{array}}^{c_{\ov{i+1}\ov{1}}} \cdots 
{\begin{array}{c} \ov{i+1} \\ \ov{i} \end{array}}^{c_{\ov{i+1}\ov{i}}}\ 
{\begin{array}{c} \ov{i} \\ \ov{1} \end{array}}^{c_{\ov{i}\ov{1}}} \cdots 
{\begin{array}{c} \ov{i} \\ \ov{i-1} \end{array}}^{c_{\ov{i}\ov{i-1}}} \biggr).
\end{equation}
Here the superscript means the multiplicity of each biletter.

Let ${\bf c}'={\bf c}(\ba',\bb')$ and 
${\bf c}''={\bf c}(\ba'',\bb'')\in \B^J_{\Delta_{i-1}}$ be the corresponding Lusztig data.
Put $T=\kappa^{\se}(\bc'')$. Then ${\rm sh}(T)=\mu^\pi$ for some $\mu\in \cP_{i-1}$ such that $\mu'$ is even. 
We define $({\tt P}(\bc),{\tt Q}(\bc))$ by
\begin{itemize}
\item[(1)] ${\tt P}(\bc) = ( (T\leftarrow b_{s}) \leftarrow \cdots \leftarrow b_1)$,

\item[(2)] ${\tt Q}(\bc)\in SST((\la/\mu)^\pi)$, where ${\rm sh}({\tt P}_k)/{\rm sh}({\tt P}_{k-1})$ is filled with $a_k$ for $1\leq k\leq s$. 
\end{itemize}
Here  
$\la={\rm sh}({\tt P}(\bc))^\pi$, ${\tt P}_k=( (T\leftarrow b_{k}) \leftarrow \cdots \leftarrow b_1)$, and ${\tt P}_0=T$.
The pair $({\tt P}(\bc),{\tt Q}(\bc))$ can be viewed as a skew-analogue of RSK correspondence applying insertion of $(\ba',\bb')$ into $T$ (cf. \cite[Proposition 1 in Section 5.1]{Ful} and \cite{SS90}).

For $\bc = ( c_{\ov{j}\,\ov{k}} ) \in {\bf B}_{\Delta_{i+1}}^J$, we define
${\bf c}^\circ = ( c_{\ov{j}\,\ov{k}}^\circ )$ by 
$$
	c_{\ov{j}\,\ov{k}}^\circ = 
	\begin{cases}
		c_{\ov{j}\,\ov{k}} & \text{if $(\ov{j}, \ov{k}) \neq (\ov{i+1}, \ov{i})$}, \\ 
		0 & \text{otherwise.}
	\end{cases}
$$
Then we identify ${\bf c}$ with $(c_{\ov{i+1}\,\ov{i}}) \otimes {\bf c}^\circ$ as a crystal element.
Set $(0) \mapsto ({\tt P}((0)), {\tt Q}((0))) = (\emptyset, \emptyset)$, where $\emptyset$ is understood as the empty tableau. Note that ${\tt Q}({\bf c}) = \emptyset$ if $c_{\ov{i}\,\ov{k}} = c_{\ov{i+1}\,\ov{l}} = 0$ for all $k, l$.

\begin{lem}\label{lem:Q commutes with f_k}
Let $\bc \in \B^J_{\Delta_{i+1}}$. Suppose that $c_{\ov{i+1}\,\ov{i}} = 0$. Then we have
\begin{equation*}
{\tt Q}(\tf_i\bc) =\tf_i{\tt Q}(\bc).
\end{equation*}
Furthermore, if $\tf_i \bc \neq {\bf 0}$, then ${\tt P}(\tf_i \bc) = {\tt P}(\bc)$.
\end{lem}
\pf Considering the action of $\tf_i$ on the subcrystal $\B^J_{\Delta_{i+1}}$ of $\B^J$ described in Proposition \ref{prop:signature for type D}, we may apply \cite[Proposition 4.6 and Remark 4.8(1)]{K09} to have ${\tt Q}(\tf_i\bc) =\tf_i{\tt Q}(\bc)$, and ${\tt P}(\tf_i \bc) = {\tt P}(\bc)$ if $\tf_i \bc^\circ \neq {\bf 0}$ (because we can naturally identify each element of $\B^J_{\Delta_{i+1}}$ with an element of the crystal $\mathcal{M}$ in \cite[Section 3]{K09}).
\qed 

Let $\ell(\la)=2m$ for some $m\geq 1$.
For $1\leq l\leq m$, let $V_l$ be the subtableau of ${\tt Q}(\bc)$ lying in the $(2l-1)$-th and $2l$-th rows from the bottom, and let $U_l$ be the subtableau of ${\tt P}(\bc)$ corresponding to $V_l$. 
Note that $U_l$ and $V_l$ are of anti-normal shapes, and $V_l^{\nw}$ is the tableau of normal shape obtained from $V_l$ by jeu de taquin to the northwest corner. We also let ${\tt P}(\bc)_{l}$ be the subtableau of ${\tt P}(\bc)$ lying above the $(2l-2)$-th row from the bottom, where ${\tt P}(\bc)_{1}={\tt P}(\bc)$.

Now we {\em glue} each $V_l^{\nw}$ to ${\tt P}(\bc)$ to define a tableau ${\tt T}(\bc)$ by the following inductive algorithm; 
\begin{itemize}
	\item[(\textbf{g}-1)] Let ${\tt T}(\bc)_m$ be the tableau obtained from ${\tt P}(\bc)_m$ by gluing $V_m^{\nw}$ to $U_m$ (so that $V_m^{\nw}$ and $U_m$ form a two-rowed rectangle $W_m$).
	\item[(\textbf{g}-2)] Consider a tableau obtained from ${\tt P}(\bc)_{m-1}$ by gluing $V_{m-1}^{\nw}$ to $U_{m-1}$ and replacing ${\tt P}(\bc)_m$ by ${\tt T}(\bc)_m$. If the number of columns in $W_m$ is greater than $\mu_{2m-3}-\mu_{2m-1}$ for $m \ge 2$, then we move dominos
\raisebox{-.6ex}{{\tiny ${\def\lr#1{\multicolumn{1}{|@{\hspace{.6ex}}c@{\hspace{.6ex}}|}{\raisebox{-.3ex}{$#1$}}}\raisebox{-.6ex}
{$\begin{array}[b]{c}
\cline{1-1}
\lr{ \!\ov{i\!+\!1}\!}\\
\cline{1-1}
\lr{ \ov{i}}\\
\cline{1-1}
\end{array}$}}$}}
down to the next two rows as many as the difference, and denote the resulting tableau by ${\tt T}(\bc)_{m-1}$. Also denote by $W_{m-1}$ the new two-rowed rectangle in the first two rows of $\ov{\tt T}(\bc)_{m-1}$ from the bottom.

	\item[(\textbf{g}-3)] Repeat (\textbf{g}-2) to have ${\tt T}(\bc)_{m-2},\ldots, {\tt T}(\bc)_1$, and let ${\tt T}(\bc)={\tt T}(\bc)_1$. 
\end{itemize}

Let us also introduce an inductive algorithm to define a tableau $\ov{\tt T}(\bc)$, which is slightly different from the above algorithm;
\begin{itemize}
	\item[($\ov{\text{\textbf{g}}}$-1)] Let $\ov{\tt T}(\bc)_m$ be the tableau obtained from ${\tt P}(\bc^\circ)_m$ by gluing $V_m^{\nw}$ to $U_m$, and $c_{\ov{i+1}\,\ov{i}}$ copies of\,
\raisebox{-.6ex}{{\tiny ${\def\lr#1{\multicolumn{1}{|@{\hspace{.6ex}}c@{\hspace{.6ex}}|}{\raisebox{-.3ex}{$#1$}}}\raisebox{-.6ex}
{$\begin{array}[b]{c}
\cline{1-1}
\lr{ \!\ov{i\!+\!1}\!}\\
\cline{1-1}
\lr{ \ov{i}}\\
\cline{1-1}
\end{array}$}}$}}\,
to the left of the resulting tableau so that it forms a two-rowed rectangle $W_m$.
	\smallskip
	
	\item[($\ov{\text{\textbf{g}}}$-2)] Consider a tableau obtained from ${\tt P}(\bc^\circ)_{m-1}$ by gluing $V_{m-1}^{\nw}$ to $U_{m-1}$ and replacing ${\tt P}(\bc^\circ)_m$ by $\ov{\tt T}(\bc)_m$. If the number of columns in $W_m$ is greater than $\mu_{2m-3}-\mu_{2m-1}$ for $m \ge 2$, then we move dominos
\raisebox{-.6ex}{{\tiny ${\def\lr#1{\multicolumn{1}{|@{\hspace{.6ex}}c@{\hspace{.6ex}}|}{\raisebox{-.3ex}{$#1$}}}\raisebox{-.6ex}
{$\begin{array}[b]{c}
\cline{1-1}
\lr{ \!\ov{i\!+\!1}\!}\\
\cline{1-1}
\lr{ \ov{i}}\\
\cline{1-1}
\end{array}$}}$}}
down to the next two rows as many as the difference, and denote the resulting tableau by $\ov{\tt T}(\bc)_{m-1}$. Also denote by $W_{m-1}$ the new two-rowed rectangle in the first two rows of $\ov{\tt T}(\bc)_{m-1}$ from the bottom.
	\smallskip
	
	\item[($\ov{\text{\textbf{g}}}$-3)] Repeat ($\ov{\text{\textbf{g}}}$-2) to have $\ov{\tt T}(\bc)_{m-2},\ldots, \ov{\tt T}(\bc)_1$, and let $\ov{\tt T}(\bc)=\ov{\tt T}(\bc)_1$. 
\end{itemize}
Here $\lambda = {\rm sh}\left( {\tt P}(\bc^\circ) \right)^\pi$ and let $m = \lfloor \frac{\ell(\lambda)}{2}  \rfloor + 1$. Note that the algorithm above use $({\tt P}(\bc^\circ), {\tt Q}(\bc^\circ))$, rather than $({\tt P}(\bc), {\tt Q}(\bc))$.

\begin{ex} \label{ex:algorithm}
{\rm  Suppose that $n=6$ and $i=4$. 
Let ${\bf c} \in \B^J_{\Delta_5}$ be given by
\begin{equation*}
\Tl{$\xymatrixcolsep{0pc}\xymatrixrowsep{0.3pc}\xymatrix{
& & & & 0 & &  & &    \\
& & & 0 & & {\bf 3} & & &  & \\
& & 0 & & {\bf 1} & & {\bf 1} & & \\
& 0 & & {\bf 3} & & {\bf 2} & & 1 & \\  
0 & & {\bf 3} & & {\bf 2} & & 0 & & 3}$}
\end{equation*}
where ${\bf c}'$ is given by the entries in bold letters.
Then
\begin{equation*}
\Tl{$\ytableausetup{aligntableaux=center, boxsize=1.1em}
T = \kappa^{\se}({\bf c}'') = 
\begin{ytableau}
\ov{\tl 3} & \ov{\tl 2} & \ov{\tl 2} & \ov{\tl 2} \\
\ov{\tl 1} & \ov{\tl 1} & \ov{\tl 1} & \ov{\tl 1}
\end{ytableau}\ , \quad 
\biggr(\begin{array}{c} \ba' \\ \bb' \end{array}\biggr)
=
\biggr(\ {\begin{array}{c} \ov{5} \\ \ov{1} \end{array}}^{3}
{\begin{array}{c} \ov{5} \\ \ov{2} \end{array}}^{1}
{\begin{array}{c} \ov{5} \\ \ov{3} \end{array}}^{3}
{\begin{array}{c} \ov{5} \\ \ov{4} \end{array}}^{3}
{\begin{array}{c} \ov{4} \\ \ov{1} \end{array}}^{1} 
{\begin{array}{c} \ov{4} \\ \ov{2} \end{array}}^{2} 
{\begin{array}{c} \ov{4} \\ \ov{3} \end{array}}^{2} \ \biggr).$}
\end{equation*}
By definition, the pair $({\tt P}(\bc),{\tt Q}(\bc))$ is given by
\begin{equation*}
\ytableausetup{aligntableaux=center, boxsize=1.1em}
\Tl{${\tt P}(\bc) = 
\begin{ytableau}
\none &\none &\none &\none &\none &\none &\none &\none & \red{\ov{\tl{4}}} & \red{\ov{\tl{4}}} \\
\none & \none & \none & \none & \none & \none & \red{\ov{\tl{3}}} & \red{\ov{\tl{3}}} & \red{\ov{\tl{3}}}&\red{\ov{\tl{3}}} \\
\none & \none & \none & \blue{\ov{\tl{4}}} & \blue{\ov{\tl{3}}} & \blue{\ov{\tl{2}}} & \ov{\tl{2}} & \ov{\tl{2}} & \ov{\tl{2}} & \ov{\tl{2}} \\
\blue{\ov{\tl{3}}} & \blue{\ov{\tl{2}}} & \blue{\ov{\tl{1}}} & \blue{\ov{\tl{1}}}& \blue{\ov{\tl{1}}}& \blue{\ov{\tl{1}}}& \ov{\tl{1}}& \ov{\tl{1}}& \ov{\tl{1}}& \ov{\tl{1}}
\end{ytableau}$}\ , \quad
{\tt Q}(\bc) =
\begin{ytableau}
 \none &  \none &  \none &  \none &  \none &  \none &  \none &  \none & \red{\ov{\tl{5}}} &\red{\ov{\tl{5}}} \\
 \none & \none & \none & \none & \none & \none &  \red{\ov{\tl{5}}} & \red{\ov{\tl{5}}} & 
\red{\ov{\tl{4}}} & \red{\ov{\tl{4}}} \\
 \none & \none & \none & \blue{\ov{\tl{5}}} & \blue{\ov{\tl{5}}} & \blue{\ov{\tl{5}}} &  \none[\cdot] & \none[\cdot] & \none[\cdot] & \none[\cdot] \\
\blue{\ov{\tl{5}}} & \blue{\ov{\tl{5}}} & \blue{\ov{\tl{5}}} & \blue{\ov{\tl{4}}} &  \blue{\ov{\tl{4}}} & \blue{\ov{\tl{4}}} & \none[\cdot] & \none[\cdot] & \none[\cdot] & \none[\cdot]
\end{ytableau}
\end{equation*} \vskip 2mm
\noindent where $U_1$ and $V_1$ (resp. $U_2$ and $V_2$) are given in blue (resp. in red).
Since
\begin{equation*}
V_1^{\nw}=
\ytableausetup{aligntableaux=center, boxsize=1.15em}
\begin{ytableau}
\blue{\ov{\tl{5}}} & \blue{\ov{\tl{5}}} & \blue{\ov{\tl{5}}} & \blue{\ov{\tl{5}}} & \blue{\ov{\tl{5}}} & \blue{\ov{\tl{5}}} \\
\blue{\ov{\tl{4}}} & \blue{\ov{\tl{4}}} & \blue{\ov{\tl{4}}} & \none[\cdot] & \none[\cdot] & \none[\cdot]
\end{ytableau} \ , \quad
V_2^{\nw}=
\begin{ytableau}
\red{\ov{\tl{5}}} & \red{\ov{\tl{5}}} &\red{\ov{\tl{5}}} &\red{\ov{\tl{5}}} \\
\red{\ov{\tl{4}}} & \red{\ov{\tl{4}}} & \none[\cdot] & \none[\cdot]
\end{ytableau}
\end{equation*}
we have by algorithm (\textbf{g}-1)--(\textbf{g}-3), \vskip 2mm
\begin{equation*}
\ytableausetup{aligntableaux=center, boxsize=1.15em}
\Tl{${\tt T}(\bc) = \begin{ytableau}
\none &\none &\none &\none &\none &\none &\none &\none &\none &\none &\none &\red{\ov{\tl{5}}} &\red{\ov{\tl{5}}} & \red{\ov{\tl{4}}} & \red{\ov{\tl{4}}} \\
\none & \none &\none &\none &\none &\none & \none & \none & \none & \none & \none & \red{\ov{\tl{3}}} & \red{\ov{\tl{3}}} & \red{\ov{\tl{3}}}&\red{\ov{\tl{3}}} \\
\red{\ov{\tl{5}}} &\red{\ov{\tl{5}}} &\blue{\ov{\tl{5}}} &\blue{\ov{\tl{5}}} &\blue{\ov{\tl{5}}}  &\blue{\ov{\tl{5}}}  & \blue{\ov{\tl{5}}}  & \blue{\ov{\tl{5}}}  & \blue{\ov{\tl{4}}} & \blue{\ov{\tl{3}}} & \blue{\ov{\tl{2}}} & \ov{\tl{2}} & \ov{\tl{2}} & \ov{\tl{2}} & \ov{\tl{2}} \\
\red{\ov{\tl{4}}} &\red{\ov{\tl{4}}} &\blue{\ov{\tl{4}}} &\blue{\ov{\tl{4}}} &\blue{\ov{\tl{4}}} & \blue{\ov{\tl{3}}} & \blue{\ov{\tl{2}}} & \blue{\ov{\tl{1}}} & \blue{\ov{\tl{1}}}& \blue{\ov{\tl{1}}}& \blue{\ov{\tl{1}}}& \ov{\tl{1}}& \ov{\tl{1}}& \ov{\tl{1}}& \ov{\tl{1}}
\end{ytableau}$}
\end{equation*}}
\end{ex}

\begin{ex} \label{ex:algorithm2}
{\em 
Let us compare $\ov{\tt T}(\bc)$ with ${\tt T}(\bc)$.
Throughout this example, we write the bumping (or recording) letters from $c_{\ov{i+1}\,\ov{i}}$ (resp.~$c_{\ov{i}\,\ov{j}}$ and $c_{\ov{i+1}\,\ov{k}}$) in red (resp.~blue) as shown below. 
Suppose that $n = 9$ and $i = 7$. Let us consider 
\begin{equation*}
{\bf c} = 
\vcenter{
\xymatrixcolsep{0pc}\xymatrixrowsep{0pc}\xymatrix{
& & & & & & & 0 & & & & & & &    \\
& & & & & & 0 & & \blue{0} & & & & & &    \\
& & & & & 0 & & \blue{1} & & \blue{1} & & & & &    \\
& & & & 0 & & \blue{0} & & \blue{1} & & 0 & & & &    \\
& & & 0 & & \blue{0} & & \blue{0} & & 0 & & 0 & & & \\
& & 0 & & \blue{1} & & \blue{0} & & 0 & & 1 & & 0 & & \\
& 0 & & \blue{1} & & \blue{1} & & 0 & & 0 & & 1 & & 1 & \\  
0 & & \red{4} & & \blue{2} & & 2 & & 1 & & 2 & & 1 & & 2}}
\end{equation*}
Then we have
\ytableausetup{aligntableaux=center, boxsize=1.15em}
\begin{equation} \label{eq: PQ circ vs PQ}
\begin{split}
	{\tt P}(\bc^\circ) = 
	\begin{ytableau}
		\none & \none & \none & \none & \none & \none & \ov{\tl{6}} & \ov{\tl{6}} \\
		\none & \none & \none & \none & \none & \blue{\ov{\tl{6}}} & \ov{\tl{5}} & \ov{\tl{5}} \\
		\none & \none & \none & \none & \blue{\ov{\tl{6}}} & \ov{\tl{5}} & \ov{\tl{4}} & \ov{\tl{4}} \\
		\none & \none & \blue{\ov{\tl{6}}} & \blue{\ov{\tl{5}}} & \blue{\ov{\tl{5}}} & \ov{\tl{4}} & \ov{\tl{3}} & \ov{\tl{3}} \\
		\none & \blue{\ov{\tl{5}}} & \ov{\tl{4}} & \ov{\tl{3}} & \ov{\tl{2}} & \ov{\tl{2}} & \ov{\tl{2}} & \ov{\tl{2}} \\
		\blue{\ov{\tl{3}}} & \blue{\ov{\tl{2}}} & \ov{\tl{2}} & \ov{\tl{2}} & \ov{\tl{1}} & \ov{\tl{1}} & \ov{\tl{1}} & \ov{\tl{1}} \\
		\none & \none & \none & \none & \none & \none & \none & \none \\
	\end{ytableau}
	\,\qquad\,
	&
	{\tt Q}(\bc^\circ) = 
	\begin{ytableau}
		\none & \none & \none & \none & \none & \none & \none[\cdot] & \none[\cdot] \\
		\none & \none & \none & \none & \none & \blue{\ov{\tl{7}}} & \none[\cdot] & \none[\cdot]\\
		\none & \none & \none & \none & \blue{\ov{\tl{8}}} & \none[\cdot] & \none[\cdot] & \none[\cdot] \\
		\none & \none & \blue{\ov{\tl{8}}} & \blue{\ov{\tl{7}}} & \blue{\ov{\tl{7}}} & \none[\cdot] & \none[\cdot] & \none[\cdot] \\
		\none & \blue{\ov{\tl{8}}} & \none[\cdot] & \none[\cdot] & \none[\cdot] & \none[\cdot] & \none[\cdot] & \none[\cdot] \\
		\blue{\ov{\tl{7}}} & \blue{\ov{\tl{7}}} & \none[\cdot] & \none[\cdot] & \none[\cdot] & \none[\cdot] & \none[\cdot] & \none[\cdot] \\
		\none & \none & \none & \none & \none & \none & \none & \none \\
	\end{ytableau}
	\\
		{\tt P}(\bc) = 
	\begin{ytableau}
		\none & \none & \none & \none & \none & \none & \red{\ov{\tl{7}}} & \red{\ov{\tl{7}}} \\
		\none & \none & \none & \none & \none & \red{\ov{\tl{7}}} & \ov{\tl{6}} & \ov{\tl{6}} \\
		\none & \none & \none & \none & \none & \blue{\ov{\tl{6}}} & \ov{\tl{5}} & \ov{\tl{5}} \\
		\none & \none & \none & \red{\ov{\tl{7}}} & \blue{\ov{\tl{6}}} & \ov{\tl{5}} & \ov{\tl{4}} & \ov{\tl{4}} \\
		\none & \none & \blue{\ov{\tl{6}}} & \blue{\ov{\tl{5}}} & \blue{\ov{\tl{5}}} & \ov{\tl{4}} & \ov{\tl{3}} & \ov{\tl{3}} \\
		\none & \blue{\ov{\tl{5}}} & \ov{\tl{4}} & \ov{\tl{3}} & \ov{\tl{2}} & \ov{\tl{2}} & \ov{\tl{2}} & \ov{\tl{2}} \\
		\blue{\ov{\tl{3}}} & \blue{\ov{\tl{2}}} & \ov{\tl{2}} & \ov{\tl{2}} & \ov{\tl{1}} & \ov{\tl{1}} & \ov{\tl{1}} & \ov{\tl{1}} \\
	\end{ytableau}
	\,\qquad\,
	&
	{\tt Q}(\bc) = 
	\begin{ytableau}
		\none & \none & \none & \none & \none & \none & \red{\ov{\tl{8}}} & \red{\ov{\tl{8}}} \\
		\none & \none & \none & \none & \none & \red{\ov{\tl{8}}} & \none[\cdot] & \none[\cdot] \\
		\none & \none & \none & \none & \none & \blue{\ov{\tl{7}}} & \none[\cdot] & \none[\cdot]\\
		\none & \none & \none & \red{\ov{\tl{8}}} & \blue{\ov{\tl{8}}} & \none[\cdot] & \none[\cdot] & \none[\cdot] \\
		\none & \none & \blue{\ov{\tl{8}}} & \blue{\ov{\tl{7}}} & \blue{\ov{\tl{7}}} & \none[\cdot] & \none[\cdot] & \none[\cdot] \\
		\none & \blue{\ov{\tl{8}}} & \none[\cdot] & \none[\cdot] & \none[\cdot] & \none[\cdot] & \none[\cdot] & \none[\cdot] \\
		\blue{\ov{\tl{7}}} & \blue{\ov{\tl{7}}} & \none[\cdot] & \none[\cdot] & \none[\cdot] & \none[\cdot] & \none[\cdot] & \none[\cdot] \\
	\end{ytableau}
\end{split}
\end{equation}
where $\lambda = (8, 7, 6, 4, 3, 2)$ with $m = 4$, and $\mu = (6, 6, 3, 3, 2, 2)$. 
By definition, $U_l$ and $V_l^{\nw}$ $(1 \le l \le 4)$ associated with $({\tt P}(\bc^\circ), {\tt Q}(\bc^\circ))$ are given by
\begin{equation*}
\begin{split}
	U_4 = \emptyset \,\qquad\, & \,\,\,	U_3 = 
	\begin{ytableau}
		\blue{\ov{\tl{6}}}
	\end{ytableau}
	\,\qquad\,\,\,\,
	U_2 = 
	\begin{ytableau}
		\none & \none & \blue{\ov{\tl{6}}} \\
		\blue{\ov{\tl{6}}} & \blue{\ov{\tl{5}}} & \blue{\ov{\tl{5}}} \\
	\end{ytableau}
	\qquad\quad\!
	U_1 = 
	\begin{ytableau}
		\none & \blue{\ov{\tl{5}}} \\
		\blue{\ov{\tl{2}}} & \blue{\ov{\tl{3}}} \\
	\end{ytableau}
	\\
	V_4^{\nw} = \emptyset \,\qquad\, 
	& V_3^{\nw} = 
	\begin{ytableau}
		\blue{\ov{\tl{7}}} \\
		\none[\cdot]
	\end{ytableau}
	\,\qquad\,\,
	V_2^{\nw} = 
	\begin{ytableau}
		\blue{\ov{\tl{8}}} & \blue{\ov{\tl{8}}} & \blue{\ov{\tl{7}}} \\
		\blue{\ov{\tl{7}}} & \none[\cdot] & \none[\cdot] \\
	\end{ytableau}
	\,\qquad\,\,
	V_1^{\nw} = 
	\begin{ytableau}
		\blue{\ov{\tl{8}}} & \blue{\ov{\tl{7}}} \\
		\blue{\ov{\tl{7}}} & \none[\cdot] \\
	\end{ytableau}
\end{split}
\end{equation*}
Now we apply the algorithm ($\ov{\text{\textbf{g}}}$-1)--($\ov{\text{\textbf{g}}}$-3) to obtain $\ov{\tt T}(\bc)$ as follows.
\begin{equation} \label{eq: ov tt T}
\begin{split}
	& \ov{\tt T}(\bc)_4 = 
	\begin{ytableau}
		\red{\ov{\tl{8}}} & \red{\ov{\tl{8}}} & \red{\ov{\tl{8}}} & \red{\ov{\tl{8}}} \\
		\red{\ov{\tl{7}}} & \red{\ov{\tl{7}}} & \red{\ov{\tl{7}}} & \red{\ov{\tl{7}}} \\
	\end{ytableau}
	\,\qquad\,\qquad\,\qquad\qquad\,\,
	\ov{\tt T}(\bc)_3 = 
	\begin{ytableau}
		\none & \none & \none & \red{\ov{\tl{8}}} & \red{\ov{\tl{8}}} \\
		\none & \none & \none & \red{\ov{\tl{7}}} & \red{\ov{\tl{7}}} \\
		\red{\ov{\tl{8}}} & \red{\ov{\tl{8}}} & \blue{\ov{\tl{7}}} & \ov{\tl{6}} & \ov{\tl{6}} \\
		\red{\ov{\tl{7}}} & \red{\ov{\tl{7}}} & \blue{\ov{\tl{6}}} & \ov{\tl{5}} & \ov{\tl{5}} \\
	\end{ytableau}
	\\
	& \ov{\tt T}(\bc)_2 = 
		\begin{ytableau}
						 \none & \none & \none & \none & \none & \none & \none & \none & \none & \none & \none & \none \\
				 \none & \none & \none & \none & \none & \none & \none & \none & \none & \none & \none & \none \\
		\none & \none & \none & \none & \none & \none & \none & \red{\ov{\tl{8}}} & \red{\ov{\tl{8}}} \\
		\none & \none & \none & \none & \none & \none & \none & \red{\ov{\tl{7}}} & \red{\ov{\tl{7}}} \\
		\none & \none & \none & \none & \none & \none & \blue{\ov{\tl{7}}} & \ov{\tl{6}} & \ov{\tl{6}} \\
		\none & \none & \none & \none & \none & \none & \blue{\ov{\tl{6}}} & \ov{\tl{5}} & \ov{\tl{5}} \\
		\red{\ov{\tl{8}}} & \red{\ov{\tl{8}}} & \blue{\ov{\tl{8}}} & \blue{\ov{\tl{8}}} & \blue{\ov{\tl{7}}} & \blue{\ov{\tl{6}}} & {\ov{\tl{5}}} & \ov{\tl{4}} & \ov{\tl{4}} \\
		\red{\ov{\tl{7}}} & \red{\ov{\tl{7}}} & \blue{\ov{\tl{7}}} & \blue{\ov{\tl{6}}} & \blue{\ov{\tl{5}}} & \blue{\ov{\tl{5}}} & {\ov{\tl{4}}} & \ov{\tl{3}} & \ov{\tl{3}} \\
	\end{ytableau}
	\!\!\!\!
	\ov{\tt T}(\bc)_1 = 
		\begin{ytableau}
		 \none & \none & \none & \none & \none & \none & \none & \none & \none & \none & \red{\ov{\tl{8}}} & \red{\ov{\tl{8}}} \\
		 \none & \none & \none & \none & \none & \none & \none & \none & \none & \none & \red{\ov{\tl{7}}} & \red{\ov{\tl{7}}} \\
		 \none & \none & \none & \none & \none & \none & \none & \none & \none & \blue{\ov{\tl{7}}} & \ov{\tl{6}} & \ov{\tl{6}} \\
		 \none & \none & \none & \none & \none & \none & \none & \none & \none & \blue{\ov{\tl{6}}} & \ov{\tl{5}} & \ov{\tl{5}} \\
		 \none & \none & \none & \none & \none & \none & \blue{\ov{\tl{8}}} & \blue{\ov{\tl{7}}} & \blue{\ov{\tl{6}}} & {\ov{\tl{5}}} & \ov{\tl{4}} & \ov{\tl{4}} \\
		 \none & \none & \none & \none & \none & \none & \blue{\ov{\tl{6}}} & \blue{\ov{\tl{5}}} & \blue{\ov{\tl{5}}} & {\ov{\tl{4}}} & \ov{\tl{3}} & \ov{\tl{3}} \\
		 \red{\ov{\tl{8}}} & \red{\ov{\tl{8}}} & \blue{\ov{\tl{8}}} & \blue{\ov{\tl{8}}}& \blue{\ov{\tl{7}}} & \blue{\ov{\tl{5}}} & \ov{\tl{4}} & \ov{\tl{3}} & \ov{\tl{2}} & \ov{\tl{2}}  & \ov{\tl{2}} & \ov{\tl{2}} \\
		\red{\ov{\tl{7}}} & \red{\ov{\tl{7}}}& \blue{\ov{\tl{7}}} & \blue{\ov{\tl{7}}} & \blue{\ov{\tl{3}}} & \blue{\ov{\tl{2}}} & \ov{\tl{2}} & \ov{\tl{2}} & \ov{\tl{1}} & \ov{\tl{1}} & \ov{\tl{1}}& \ov{\tl{1}} \\
	\end{ytableau}
\end{split}
\end{equation}
On the other hand, we have by the algorithm (${\text{\textbf{g}}}$-1)--(${\text{\textbf{g}}}$-3)
\begin{equation} \label{eq: tt T}
\begin{split}
	& {\tt T}(\bc)_4 = 
	\begin{ytableau}
		 \red{\ov{\tl{8}}} & \red{\ov{\tl{8}}} \\
		 \red{\ov{\tl{7}}} & \red{\ov{\tl{7}}} \\
	\end{ytableau}
	\,\qquad\,\qquad\,\qquad\qquad\qquad\,
	{\tt T}(\bc)_3 = 
	\begin{ytableau}
		 \none & \none & \red{\ov{\tl{8}}} & \red{\ov{\tl{8}}} \\
		 \none & \none & \red{\ov{\tl{7}}} & \red{\ov{\tl{7}}} \\
		 \red{\ov{\tl{8}}} & \red{\ov{\tl{7}}} & \ov{\tl{6}} & \ov{\tl{6}} \\
		 \blue{\ov{\tl{7}}} & \blue{\ov{\tl{6}}} & \ov{\tl{5}} & \ov{\tl{5}} \\
	\end{ytableau}
	\\
	& {\tt T}(\bc)_2 = 
		\begin{ytableau}
		 \none & \none & \none & \none & \none & \none & \none & \none & \none & \none \\
		 \none & \none & \none & \none & \none & \none & \none & \none & \none & \none \\
		\none & \none & \none & \none & \none & \none & \none & \red{\ov{\tl{8}}} & \red{\ov{\tl{8}}} \\
		\none & \none & \none & \none & \none & \none & \none & \red{\ov{\tl{7}}} & \red{\ov{\tl{7}}} \\
		\none & \none & \none & \none & \none & \none & \red{\ov{\tl{7}}} & \ov{\tl{6}} & \ov{\tl{6}} \\
		\none & \none & \none & \none & \none & \none & \blue{\ov{\tl{6}}} & \ov{\tl{5}} & \ov{\tl{5}} \\
		\red{\ov{\tl{8}}} & \blue{\ov{\tl{8}}} & \red{\ov{\tl{8}}} & \blue{\ov{\tl{8}}} & \red{\ov{\tl{7}}} & \blue{\ov{\tl{6}}} & {\ov{\tl{5}}} & \ov{\tl{4}} & \ov{\tl{4}} \\
		\blue{\ov{\tl{7}}} & \blue{\ov{\tl{7}}} & \blue{\ov{\tl{7}}} & \blue{\ov{\tl{6}}} & \blue{\ov{\tl{5}}} & \blue{\ov{\tl{5}}} & {\ov{\tl{4}}} & \ov{\tl{3}} & \ov{\tl{3}} \\
	\end{ytableau}
	\,\quad\,
	{\tt T}(\bc)_1 = 
		\begin{ytableau}
		 \none & \none & \none & \none & \none & \none & \none & \none & \none & \none & \red{\ov{\tl{8}}} & \red{\ov{\tl{8}}} \\
		 \none & \none & \none & \none & \none & \none & \none & \none & \none & \none & \red{\ov{\tl{7}}} & \red{\ov{\tl{7}}} \\
		 \none & \none & \none & \none & \none & \none & \none & \none & \none & \red{\ov{\tl{7}}} & \ov{\tl{6}} & \ov{\tl{6}} \\
		 \none & \none & \none & \none & \none & \none & \none & \none & \none & \blue{\ov{\tl{6}}} & \ov{\tl{5}} & \ov{\tl{5}} \\
		 \none & \none & \none & \none & \none & \none & \blue{\ov{\tl{8}}} & \red{\ov{\tl{7}}} & \blue{\ov{\tl{6}}} & {\ov{\tl{5}}} & \ov{\tl{4}} & \ov{\tl{4}} \\
		 \none & \none & \none & \none & \none & \none & \blue{\ov{\tl{6}}} & \blue{\ov{\tl{5}}} & \blue{\ov{\tl{5}}} & {\ov{\tl{4}}} & \ov{\tl{3}} & \ov{\tl{3}} \\
		 \red{\ov{\tl{8}}} & \blue{\ov{\tl{8}}} & \red{\ov{\tl{8}}} & \blue{\ov{\tl{8}}}& \blue{\ov{\tl{7}}} & \blue{\ov{\tl{5}}} & \ov{\tl{4}} & \ov{\tl{3}} & \ov{\tl{2}} & \ov{\tl{2}}  & \ov{\tl{2}} & \ov{\tl{2}} \\
		\blue{\ov{\tl{7}}} & \blue{\ov{\tl{7}}}& \blue{\ov{\tl{7}}} & \blue{\ov{\tl{7}}} & \blue{\ov{\tl{3}}} & \blue{\ov{\tl{2}}} & \ov{\tl{2}} & \ov{\tl{2}} & \ov{\tl{1}} & \ov{\tl{1}} & \ov{\tl{1}}& \ov{\tl{1}} \\
	\end{ytableau}
\end{split}
\end{equation}
Thus we have $\ov{\tt T}(\bc) = {\tt T}(\bc) = \kappa^{\searrow}({\bf c})$. Note that $\kappa^{\searrow}({\bf c}^\circ)$ is obtained from $\ov{\tt T}(\bc)$ by removing the red dominos
\raisebox{-.6ex}{{\tiny ${\def\lr#1{\multicolumn{1}{|@{\hspace{.6ex}}c@{\hspace{.6ex}}|}{\raisebox{-.3ex}{$#1$}}}\raisebox{-.6ex}
{$\begin{array}[b]{c}
\cline{1-1}
\lr{ \!\ov{8}\!}\\
\cline{1-1}
\lr{ \ov{7}}\\
\cline{1-1}
\end{array}$}}$}}.
}
\end{ex}

\begin{lem} \label{lem:Delta_k}
Under the above hypothesis, we have
\begin{itemize}
\item[(1)] ${\tt T}(\bc) = \kappa^{\se}({\bf c})$,

\item[(2)] $\ov{\tt T}(\bc) = {\tt T}(\bc)$,

\item[(3)] $\kappa^{\se}(\tf_i{\bc}) = \tf_i\kappa^{\se}({\bf c})$.
\end{itemize}
\end{lem}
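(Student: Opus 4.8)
The plan is to establish (1) as a purely combinatorial identity and then deduce (2) from (1) together with Lemma \ref{lem:Q commutes with f_k} and Proposition \ref{prop:signature for type D}. I keep the decomposition $(\ba,\bb)=(\ba',\bb')\cdot(\ba'',\bb'')$ and write $T=\kappa^{\se}(\bc'')$, ${\tt P}={\tt P}(\bc)$, ${\tt Q}={\tt Q}(\bc)$.

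For (1), the starting point is that in the $\Omega$-order the biletters of $(\ba'',\bb'')$, whose first letters lie in $\{\ov{1},\dots,\ov{i-1}\}$, all come after those of $(\ba',\bb')$, whose first letters are $\ov{i}$ or $\ov{i+1}$. Hence Burge's insertion computing $\kappa^{\se}(\bc)=P^{\se}(\ba,\bb)$ processes $(\ba'',\bb'')$ first, producing exactly $T$, and then inserts $(\ba',\bb')$, appending the letters $a_k\in\{\ov{i},\ov{i+1}\}$ as it goes. Since $\bc''\in\B^J_{\Delta_{i-1}}$, all entries of $T$ lie in $\{\ov{1},\dots,\ov{i-1}\}$, so $\ov{i},\ov{i+1}$ are strictly smaller than every entry of $T$; the appended letters therefore accumulate in the upper rows, and their positions are dictated by the sequence of shapes recorded in ${\tt Q}$. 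I would prove, by induction on the number $m=\ell(\la)/2$ of two-row blocks, that the gluing (\textbf{g}-1)--(\textbf{g}-3) reproduces this accumulation exactly: rectifying each $V_l$ to the two-row rectangle $V_l^{\nw}$ by jeu de taquin and pushing the requisite number of vertical dominoes $\ov{i+1}/\ov{i}$ down to the next two rows matches the interleaving of appended letters with the column insertions of the $b_k$. The crux is to verify, block by block, that the number of dominoes forced down at step (\textbf{g}-2)---namely the amount by which the width of $W_l$ exceeds $\mu_{2l-3}-\mu_{2l-1}$---is exactly the overhang produced by Burge's insertion; this is the skew-RSK bookkeeping of \cite{Ful,SS90} specialized to two-valued recording data, as illustrated in Example \ref{ex:algorithm}.

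For (2), Proposition \ref{prop:signature for type D} shows that for $\bc\in\B^J_{\Delta_{i+1}}$ the operator $\tf_i$ replaces a single biletter $\binom{\ov{i+1}}{\ov{s}}$ by $\binom{\ov{i}}{\ov{s}}$; that is, $\tf_i$ modifies only the row word $\ba'$, fixing $\bb'$ and the entire subword $(\ba'',\bb'')$. Consequently $T=\kappa^{\se}(\bc'')$ is unchanged; $\tf_i$ preserves the column-insertion tableau, ${\tt P}(\tf_i\bc)={\tt P}(\bc)$, by the standard invariance of the insertion tableau under recording-side crystal operators (cf. \cite{K09}); and Lemma \ref{lem:Q commutes with f_k} gives ${\tt Q}(\tf_i\bc)=\tf_i{\tt Q}(\bc)$. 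Using (1) to write $\kappa^{\se}(\tf_i\bc)={\tt T}(\tf_i\bc)$, the problem reduces to showing that the gluing intertwines $\tf_i$, i.e. that gluing ${\tt P}$ with $\tf_i{\tt Q}$ yields $\tf_i$ applied to the gluing of ${\tt P}$ with ${\tt Q}$. Here I would use that the shapes entering (\textbf{g}-1)--(\textbf{g}-3) depend only on $T$ and ${\rm sh}({\tt P})$, both unchanged by $\tf_i$, so ${\tt Q}$ and $\tf_i{\tt Q}$ are glued along the identical sequence of shapes and differ only by changing one entry $\ov{i+1}$ to $\ov{i}$ inside some $V_l$. Since jeu de taquin commutes with the Kashiwara operators and the domino relocation in (\textbf{g}-2) carries the unit $\ov{i+1}/\ov{i}$ without altering the $\{\ov{i},\ov{i+1}\}$-signature of the reading word, $\tf_i$ acts on ${\tt T}(\bc)=\kappa^{\se}(\bc)$ precisely through the entry moved by $\tf_i$ on ${\tt Q}$. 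This gives ${\tt T}(\tf_i\bc)=\tf_i{\tt T}(\bc)$, hence $\kappa^{\se}(\tf_i\bc)=\tf_i\kappa^{\se}(\bc)$.

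I expect the main obstacle to be part (1): matching the deterministic gluing to the dynamically interleaved Burge insertion, and in particular pinning down the overhang count governing (\textbf{g}-2) at each step, which seems to require a careful induction on the two-row blocks together with the column-evenness of Burge tableaux. By contrast, the crystal compatibility needed in (2) should be comparatively soft once (1) and the invariance ${\tt P}(\tf_i\bc)={\tt P}(\bc)$ are in place.
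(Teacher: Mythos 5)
Your framework is sound and your treatment of part (2) is essentially the paper's: it likewise deduces ${\tt T}(\tf_i\bc)=\tf_i{\tt T}(\bc)$ from Lemma \ref{lem:Q commutes with f_k} together with the definition of the gluing, and then invokes (1). The gap is in part (1), where you explicitly defer the crux --- that the number of dominoes pushed down in step (\textbf{g}-2) equals the ``overhang'' produced by Burge's insertion --- to ``the skew-RSK bookkeeping of \cite{Ful,SS90} specialized to two-valued recording data''. That verification is the entire content of the lemma; it is not contained in those references and has to be done by hand. Moreover, your proposed induction on the number $m$ of two-row blocks does not decompose the insertion cleanly: the biletters of $(\ba',\bb')$ are processed in one interleaved sequence, the reverse column insertion of each $b_k$ can traverse several blocks before terminating, and the new boxes land in the various $U_l$ in an order unrelated to $l$. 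So there is no well-defined ``block-$m$ sub-insertion'' to which an inductive hypothesis for $m-1$ blocks could be applied, and the claim that each $V_l^{\nw}$ can be matched to the insertion ``block by block'' is exactly what needs proof.

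The paper instead inducts on $C_{i+1}=\sum_{j}c_{\ov{i+1}\,\ov{j}}$, the number of biletters with top letter $\ov{i+1}$ --- these are the only insertions that can force a domino downward. The base case $C_{i+1}=1$ is settled by a three-case analysis of where the box created by the corresponding $\ov{j}$-insertion lands (namely $\ell({\rm sh}(U_l))=2$ with $d>u$, $\ell({\rm sh}(U_l))=2$ with $d=u$, or $\ell({\rm sh}(U_l))=1$, where $d=\mu_{2l-3}-\mu_{2l-1}$ and $u$ is the length of the bottom row of $U_l$), and the inductive step peels off the first biletter $(a_1,b_1)=(\ov{i+1},\ov{j})$ --- the last one inserted --- and checks that inserting it into ${\tt T}(\underline{\bc})$ reproduces the gluing. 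To repair your plan, replace the block induction by this biletter-by-biletter induction and actually carry out the single-insertion case analysis; the rest of your argument, including the reduction of (2) to (1) via the invariance of ${\tt P}$ and Lemma \ref{lem:Q commutes with f_k}, then goes through.
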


\pf (1) 
Let $C_{i+1}=\sum_{1\leq j\leq i}c_{\ov{i+1}\ov{j}}$. 
We use induction on $C_{i+1}$.  Note that when $C_{i+1} = 0$, we clearly have ${\tt T}(\bc) = \kappa^{\se}({\bf c})$ by definition of ${\tt T}(\bc)$.

First, assume that $C_{i+1} = 1$. Then $c_{\ov{i+1}\ov{j}} =1$ for some $j$. 
Suppose that the box in ${\tt P}(\bc)$, which appears after insertion of the corresponding $\ov{j}$, belongs to $U_{l}$ for some $1\leq l \leq m$.
Recall that $\mu^\pi={\rm sh}(T)$. 
Let $d=\mu_{2l-3}-\mu_{2l-1}$ and 
let $u$ be the length of the bottom row of $U_l$.  

{\em Case 1}. 
Suppose that $\ell({\rm sh}(U_l))=2$ and $d > u$. Then we have \vskip 2mm
\begin{center}
$U_l=$
\ytableausetup {mathmode, boxsize=1.25em} 
			\begin{ytableau}
				\none & \none & \none & \none & *(gray) \empty \\
				\empty & \empty & \scriptstyle \cdots & \empty & \empty
			\end{ytableau}\ , \quad 
$W_l=$
\ytableausetup {mathmode, boxsize=1.25em} 
\begin{ytableau}
\scriptstyle \ov{i\!+\! 1} & \scriptstyle\ov{i} & \scriptstyle\ov{i} & \scriptstyle \cdots & \scriptstyle\ov{i} & *(gray) \empty \\
				\scriptstyle\ov{i} & \empty & \empty & \scriptstyle \cdots & \empty & \empty
			\end{ytableau}\ .

\end{center}\vskip 2mm
where the gray box denotes the one created after the insertion of $\ov{j}$.
In this case, the domino in the leftmost column of $W_l$ does not move to lower rows. Hence it is clear that ${\tt T}(\bc)$ coincides with $\kappa^{\se}({\bf c})$.

{\em Case 2}.
Suppose that $\ell({\rm sh}(U_l))=2$ and $d = u$. Then we have\vskip 2mm
\begin{center}
$U_l=$
\ytableausetup {mathmode, boxsize=1.25em} 
			\begin{ytableau}
				\none & \none & \none & \none & *(gray) \empty \\
				\empty & \empty & \scriptstyle \cdots & \empty & \empty
			\end{ytableau} \ ,\quad 
$W_l=$
\ytableausetup {mathmode, boxsize=1.25em} 
\begin{ytableau}
\scriptstyle \ov{i\!+\!1} & \scriptstyle\ov{i} & \scriptstyle\ov{i} & \scriptstyle \cdots & \scriptstyle\ov{i} & *(gray) \empty \\
				\scriptstyle\ov{i} & \empty & \empty & \scriptstyle \cdots & \empty & \empty
			\end{ytableau}\ .

\end{center}\vskip 2mm
In this case, the leftmost domino in $W_l$ moves down to a lower row by ({\bf g}-2), and it is easy to see that ${\tt T}(\bc)=\kappa^{\se}({\bf c})$.

{\em Case 3}. Finally suppose that $\ell({\rm sh}(U_l))=1$. Then \vskip 2mm
\begin{center}
$U_l =$
\ytableausetup {mathmode, boxsize=1.25em} 
			\begin{ytableau}
				\none & \none & \none & \none & \none & \none \\
				*(gray)  \empty & \empty & \empty & \scriptstyle \cdots & \empty & \empty
			\end{ytableau} \ ,\quad
$W_l =$
\ytableausetup {mathmode, boxsize=1.25em} 
			\begin{ytableau}
				\scriptstyle\ov{i\!+\!1} & \scriptstyle\ov{i} & \scriptstyle\ov{i} & \scriptstyle\cdots & \scriptstyle\ov{i} & \scriptstyle\ov{i} \\
				*(gray) \scriptstyle\ov{i} \empty & \empty & \empty & \scriptstyle \cdots & \empty & \empty
			\end{ytableau}\ .
\end{center}\vskip 2mm
As in Case 1, the domino in the leftmost column of $W_l$ does not move to lower rows, and hence ${\tt T}(\bc)=\kappa^{\se}({\bf c})$.

Next, we assume that $C_{i+1} > 1$. 
Let $(\underline{\ba}',\underline{\bb}')$ be the biword removing $(a_1,b_1)$ in $(\ba',\bb')$ in \eqref{eq:biword(a,b)}, and let $\underline{\bc}=\bc(\underline{\ba}',\underline{\bb}')$. 
Note that $(a_1,b_1)=(\ov{i+1},\ov{j})$ for some $j$.

By induction hypothesis, we have ${\tt T}(\underline{\bc})=\kappa^{\se}(\underline{\bc})$. 
On the other hand, when we apply the insertion of $(\ov{i+1},\ov{j})$ into ${\tt T}(\underline{\bc})$, the possible cases are given similarly as above. Then it is straightforward to check that the tableau obtained by insertion of $(\ov{i+1},\ov{j})$ into 
${\tt T}(\underline{\bc})$ (see the step (2) in the definition of $P^{\se}(\bc)=P^{\se}(\ba,\bb)$ \eqref{eq:kappa_se}) is equal to ${\tt T}({\bc})$. 
Therefore, we have ${\tt T}(\bc)=\kappa^{\se}(\bc)$. This completes the induction.
\smallskip

(2) Since the letters $\ov{i}$'s from $c_{\ov{i+1}\,\ov{i}}$ are the smallest insertion letters in the algorithm for ${\tt T}$ (not $\ov{\tt T}$), the 	
\raisebox{0.1ex}{{\scriptsize ${\def\lr#1{\multicolumn{1}{|@{\hspace{.6ex}}c@{\hspace{.6ex}}|}{\raisebox{-.3ex}{$#1$}}}\raisebox{-.6ex}
{$\begin{array}[b]{c}
\cline{1-1}
\lr{ \ov{i}}\\
\cline{1-1}
\end{array}$}}$}}'s
induced from $c_{\ov{i+1}\,\ov{i}}$ are arranged from the northeastern most and then moved to the left or down to the next rows if the insertion letters $\ov{k}$'s from $c_{\ov{i+1}\,\ov{k}}$ bump out 
\raisebox{0.1ex}{{\scriptsize ${\def\lr#1{\multicolumn{1}{|@{\hspace{.6ex}}c@{\hspace{.6ex}}|}{\raisebox{-.3ex}{$#1$}}}\raisebox{-.6ex}
{$\begin{array}[b]{c}
\cline{1-1}
\lr{ \ov{i}}\\
\cline{1-1}
\end{array}$}}$}}'s, where $k < i$. Then the recording letters 
\raisebox{0.1ex}{{\scriptsize ${\def\lr#1{\multicolumn{1}{|@{\hspace{.6ex}}c@{\hspace{.6ex}}|}{\raisebox{-.3ex}{$#1$}}}\raisebox{-.6ex}
{$\begin{array}[b]{c}
\cline{1-1}
\lr{ \!\ov{i\!+\!1}\!}\\
\cline{1-1}
\end{array}$}}$}}'s corresponding to the %
\raisebox{0.1ex}{{\scriptsize ${\def\lr#1{\multicolumn{1}{|@{\hspace{.6ex}}c@{\hspace{.6ex}}|}{\raisebox{-.3ex}{$#1$}}}\raisebox{-.6ex}
{$\begin{array}[b]{c}
\cline{1-1}
\lr{ \ov{i}}\\
\cline{1-1}
\end{array}$}}$}}'s
should yield the dominos 
\raisebox{-.6ex}{{\tiny ${\def\lr#1{\multicolumn{1}{|@{\hspace{.6ex}}c@{\hspace{.6ex}}|}{\raisebox{-.3ex}{$#1$}}}\raisebox{-.6ex}
{$\begin{array}[b]{c}
\cline{1-1}
\lr{ \!\ov{i\!+\!1}\!}\\
\cline{1-1}
\lr{ \ov{i}}\\
\cline{1-1}
\end{array}$}}$}}
in each ${\tt T}(\bc)_k$ (cf.~\eqref{eq: PQ circ vs PQ}).
Now we may identify the dominos \raisebox{-.6ex}{{\tiny ${\def\lr#1{\multicolumn{1}{|@{\hspace{.6ex}}c@{\hspace{.6ex}}|}{\raisebox{-.3ex}{$#1$}}}\raisebox{-.6ex}
{$\begin{array}[b]{c}
\cline{1-1}
\lr{ \!\ov{i\!+\!1}\!}\\
\cline{1-1}
\lr{ \ov{i}}\\
\cline{1-1}
\end{array}$}}$}}
in ${\tt T}(\bc)_k$ arising from $c_{\ov{i+1}\,\ov{i}}$ with ones in $\ov{\tt T}(\bc)_k$ at the same positions. In fact, there exists $l \in \Z_{\ge 1}$ such that ${\tt T}(\bc)_k$ is a subtableau of $\ov{\tt T}(\bc)_k$ for $k > l$, and ${\tt T}(\bc)_k = \ov{\tt T}(\bc)_k$ for $k \le l$ (cf.~\eqref{eq: ov tt T} and \eqref{eq: tt T}).
Thus we have $\ov{\tt T}(\bc) = {\tt T}(\bc)$.
\smallskip
 
(3) 
By definition of $\ov{\tt T}(\bc^\circ)$ and Lemma \ref{lem:Q commutes with f_k}, we have $\ov{\tt T}(\tf_i \bc^\circ) = \tf_i \ov{\tt T}(\bc^\circ)$. Thus $\kappa^{\searrow}(\tf_i \bc^\circ) = \tf_i \kappa^{\searrow}(\bc^\circ)$ by (1) and (2). 
By definition of $\ov{\tt T}(\bc)$, (1) and (2), we have
\begin{equation} \label{eq:2}
	\kappa^{\searrow}({\bf c}) = \mathsf{D}  \,\sqcup\, \kappa^{\searrow}({\bf c}^\circ),
\end{equation}
where $\mathsf{D}$ is a skew-subtableau of $\kappa^{\searrow}({\bf c})$ consisting of $c_{\ov{i+1}\,\ov{i}}$ dominos\,
\raisebox{-.6ex}{{\tiny ${\def\lr#1{\multicolumn{1}{|@{\hspace{.6ex}}c@{\hspace{.6ex}}|}{\raisebox{-.3ex}{$#1$}}}\raisebox{-.6ex}
{$\begin{array}[b]{c}
\cline{1-1}
\lr{ \!\ov{i\!+\!1}\!}\\
\cline{1-1}
\lr{ \ov{i}}\\
\cline{1-1}
\end{array}$}}$}} (cf.~\eqref{eq: ov tt T}). 
Recall that since $\tf_i$ acts on $(c_{\ov{i+1}\,\ov{i}})$ trivially, we have 
\begin{equation} \label{eq: action of tfi}
	\tf_i {\bf c} = (c_{\ov{i+1}\,\ov{i}}) \otimes ( \tf_i {\bf c}^\circ ).
\end{equation}
Suppose $\tf_i \bc \neq {\bf 0}$. It follows from Lemma \ref{lem:Q commutes with f_k} and \eqref{eq: action of tfi} that $\mathsf{D}$ is invariant under $\tf_i$, since it depends only on $c_{\ov{i+1}\,\ov{i}}$ and the number of columns of $\ov{\tt T}(\bc^\circ)_k$'s.
Therefore we have
\begin{equation*}
\kappa^{\searrow}(\tf_i \bc) = \mathsf{D}  \,\sqcup\, \kappa^{\searrow}(\tf_i {\bf c}^\circ) = \tf_i \left( \mathsf{D}  \,\sqcup\, \kappa^{\searrow}( {\bf c}^\circ) \right)  = \tf_i \kappa^{\searrow}( \bc ).
\end{equation*}
If $\tf_i \bc = {\bf 0}$, then $\tf_i \bc^\circ = {\bf 0}$ by \eqref{eq: action of tfi}, and $\tf_i \kappa^{\searrow}( {\bf c}^\circ) = {\bf 0}$. Thus $\tf_i \kappa^{\searrow}( \bc ) = {\bf 0}$ by \eqref{eq:2}. This completes the proof of (3).
\qed

\begin{proof}[Proof of Theorem \ref{thm:isomorphism theorem}]
It suffices prove that for $i\in I$ and $\bc\in \B^J$
\begin{equation*}\label{eq:kappa commutes with f_k}
\kappa^{\diamond}(\tf_i{\bc}) = 
\tf_i\kappa^{\diamond}({\bf c}) \quad (\diamond = {\scriptstyle \searrow},\, \scriptstyle{\nwarrow})
\end{equation*} 
We prove only the case when $\diamond={\scriptstyle \searrow}$ since the proof for the other case is identical.

Suppose first that $i\in I\setminus\{n\}$.
By Proposition \ref{prop:signature for type D} ($\sigma_{k,3}(\bc)$ is trivial in this case), 
we have
\begin{equation*}
{\bf c} =  \textbf{c}_{\Delta_{i+1}^c} \otimes \textbf{c}_{\Delta_{i+1}},
\end{equation*} 
as elements of $\gl_2$-crystals with respect to $\te_i, \tf_i$.

Let us denote by $\overset{B}{\longrightarrow}$ the insertion of a biword into a tableau following the algorithm given in \eqref{eq:kappa_se}. 
If we ignore the entries smaller than $\ov{i+1}$, then 
$\bc_{\Delta_{i+1}^c} \overset{B}{\longrightarrow} \kappa^{\se}(\bc_{\Delta_{i+1}})$ is equal to a usual Schensted's column insertion. Hence 
\begin{equation}\label{eq:usual Knuth}
\left(\bc_{\Delta_{i+1}^c} \overset{B}{\longrightarrow} \kappa^{\se}(\bc_{\Delta_{i+1}})\right)
= \bc_{\Delta_{i+1}^c} \otimes \kappa^{\se}(\bc_{\Delta_{i+1}}),
\end{equation}
as elements of $\gl_{i+1}$-crystals with respect to $\te_j,\tf_j$ for $1\leq j\leq i$. 
Moreover, the subtableau of $\kappa^{\se}(\bc)$ consisting of entries $\ov{n},\ldots,\ov{i+2}$ is invariant under the action of $\tf_j$ on $\kappa^{\se}(\bc)$ for $1\leq j\leq i$ since it depends only on the Knuth equivalence class of the subtableau with entries $\ov{i+1},\ldots,\ov{1}$ by definition of $\kappa^{\se}$.
 
{\em Case 1}. Suppose that 
$\tilde{f}_i {\bc} = \bc_{\Delta_{i+1}^c} \otimes \tf_i\bc_{\Delta_{i+1}}$.
Then we have
\begin{equation}\label{eq:kappa_se commutes with f_k}
\begin{split}
\kappa^{\se}(\tilde{f}_i{\bf c}) 
&= \left( \bc_{\Delta_{i+1}^c} \overset{B}{\longrightarrow} \kappa^{\se}(\tilde{f}_i{\bf c}_{\Delta_{i+1}}) \right) \\
&= \left( \bc_{\Delta_{i+1}^c} \overset{B}{\longrightarrow} \tilde{f}_i\kappa^{\se}(\bc_{\Delta_{i+1}}) \right) \quad \text{by Lemma \ref{lem:Delta_k}(3)} \\
&=  \tilde{f}_i\left( \bc_{\Delta_{i+1}^c} \overset{B}{\longrightarrow} \kappa^{\se}(\bc_{\Delta_{i+1}}) \right) \quad \text{by \eqref{eq:usual Knuth}} \\
&= \tilde{f}_i \kappa^{\se}({\bf c}).
\end{split}
\end{equation}  

{\em Case 2}. Suppose that 
$\tilde{f}_i {\bc} = \tf_i\bc_{\Delta_{i+1}^c} \otimes \bc_{\Delta_{i+1}}$. 
By the same argument as in \eqref{eq:kappa_se commutes with f_k}, we have 
$\kappa^{\se}(\tf_i{\bc})=\tf_i\kappa^{\se}({\bc})$. 


Next, suppose that $i=n$. 
We may identify $\bc$ with the pair $({\bc}_{\Delta^c_{n-1}}, {\bc}_{\Delta_{n-1}})$.
Regarding $\bc$ as an element of $\B^J_{\Delta_{n+1}}$, the crystal of type $D_{n+1}$,  
we have by Lemma \ref{lem:Delta_k}(1) the following commuting diagram;
\begin{center}
\begin{tikzpicture}
  \matrix (m) [matrix of math nodes,row sep=2em,column sep=3em,minimum width=1em]
  {
     {\bf c} & ({\bc}_{\Delta^c_{n-1}}, {\bc}_{\Delta_{n-1}}) & ({\bc}_{\Delta^c_{n-1}}, \kappa^{\se}({\bc}_{\Delta_{n-1}}) ) \\
    \kappa^{\se}(\bc)={\tt T}({\bc}) & \nonumber & ({\tt P}({\bc}), {\tt Q}({\bc})) \\};
  \path[-stealth]
    (m-1-1) edge [|->] (m-1-2)
    (m-1-1) edge [|->](m-2-1)
    (m-1-2) edge [|->](m-1-3)
    (m-1-3) edge [|->] node [right] {(i)} (m-2-3)
    (m-2-3) edge [|->] node [below] {(ii)} (m-2-1);
\end{tikzpicture}
\end{center}
Now, we can apply the same argument for the proof of \cite[Theorem 3.6]{K09} to see that the composition of (i) and (ii) commutes with $\tilde{f}_n$. Therefore, we have $\kappa^{\se}(\tf_n{\bc})=\tf_n\kappa^{\se}({\bc})$.
\end{proof}







{\small

}

\end{document}